\documentclass[english,a4paper,nolineno]{socg-lipics-v2021}

\usepackage{mathtools}
\usepackage{amssymb}
\usepackage{amscd}
\usepackage{amsthm}
\usepackage{eucal}
\usepackage{tikz-cd}
\usepackage{setspace}
\usepackage[utf8]{inputenc}
\usepackage{blkarray}
\usepackage{hyperref}
\usepackage{cleveref}
\usepackage[normalem]{ulem}
\usepackage{soul}
\usepackage{extarrows}
\bibliographystyle{plainurl}

\hideLIPIcs

\DeclareMathSymbol{\minus}{\mathbin}{AMSa}{"39}

\newcommand{\tamal}[1]		{{ \textcolor{red} {#1}}}

\newcommand{\N}{\mathbb{N}_0}

\newcommand{\im}{\text{im}\hspace{1pt}}

\newcommand{\field}{\Bbbk}

\newcommand{\cancel}[1]

\title{Efficient Algorithms for Complexes of Persistence Modules with Applications}

\author{Tamal K. Dey}{Department of Computer Science, Purdue University, West Lafayette, USA}{tamaldey@purdue.edu}{}{}

\author{Florian Russold}{Institute of Geometry, Graz University of Technology, Graz, Austria}{russold@tugraz.at}{}{}
\author{Shreyas N. Samaga}{Department of Computer Science, Purdue University, West Lafayette, USA}{ssamaga@purdue.edu}{}{}

 \authorrunning{T. Dey, F. Russold, S. Samaga}



\Copyright{Tamal K. Dey and Florian Russold and Shreyas N. Samaga}

\ccsdesc{Theory of computation~Computational geometry}
\ccsdesc{Mathematics of computing~Algebraic topology}

\keywords{Persistent (co)homology, Persistence modules, Sheaves, Presentations}
\funding{This work is supported partially by NSF grants CCF 2049010 and 2301360 and by the Austrian Science Fund (FWF): W1230}
\category{}


\supplement{}




\begin{document}

\maketitle

\begin{abstract}
We extend the persistence algorithm, viewed as an algorithm computing the homology of a complex of free persistence or graded modules, to complexes of modules that are not free. We replace persistence modules by their presentations and develop an efficient algorithm to compute the homology of a complex of presentations. To deal with inputs that are not given in terms of presentations, we give an efficient algorithm to compute a presentation of a morphism of persistence modules. This allows us to compute persistent (co)homology of instances giving rise to complexes of non-free modules. Our methods lead to a new efficient algorithm for computing the persistent homology of simplicial towers and they enable efficient algorithms to compute the persistent homology of cosheaves over simplicial towers and cohomology of persistent sheaves on simplicial complexes. We also show that we can compute the cohomology of persistent sheaves over arbitrary finite posets by reducing the computation to a computation over simplicial complexes.
\end{abstract}

\section{Introduction}
The theory of persistence, a central building block of topological data analysis, is concerned with the study of persistent objects and their persistent homology. A \emph{persistent object} $\Vec{A}\colon\mathbb{N}_0\rightarrow \mathbf{A}$ is a sequence of objects
\begin{equation} \label{persistent_object}
\begin{tikzcd}
\Vec{A}:\,A_0 \arrow[r,"f_0"] & A_1 \arrow[r,"f_1"] & A_2 \arrow[r,"f_2"] & \cdots
\end{tikzcd}
\end{equation}
in a category $\mathbf{A}$ with morphisms $f_i$. Let $\field$ be a field. Assuming that there is a chain complex of $\field$-vector spaces $C_\bullet(A_i)$ (in a category denoted $\mathbf{Ch}(\mathbf{A})$) associated to these objects inducing their homology, a persistent object provides a persistent chain complex $C_\bullet(\Vec{A})\colon\mathbb{N}_0\rightarrow \mathbf{Ch}(\mathbf{A})$ (\eqref{persistent_complex} right) or equivalently a chain complex of persistence modules (\eqref{persistent_complex} left).   
\begin{equation} \label{persistent_complex}
\begin{tikzcd}[column sep=large, row sep=large]
C_{k+1}(\Vec{A}): \arrow[d,"\partial_{k+ 1}"] &[-35pt] C_{k+ 1}(A_0) \arrow[r,"C_{k+ 1}(f_0)"] \arrow[d,"\partial_{k+ 1}^{0}"] &[10pt] C_{k+ 1}(A_1) \arrow[r,"C_{k+ 1}(f_1)"] \arrow[d,"\partial_{k+ 1}^1"] &[10pt] C_{k+ 1}(A_2) \arrow[r,"C_{k+ 1}(f_2)"] \arrow[d,"\partial_{k+ 1}^{2}"] &[10pt] \cdots\\
C_k(\Vec{A}): \arrow[d,"\partial_k"] & C_{k}(A_0) \arrow[r,"C_{k}(f_0)"] \arrow[d,"\partial_{k}^{0}"] & C_{k}(A_1) \arrow[r,"C_{k}(f_1)"] \arrow[d,"\partial_{k}^{1}"] & C_{k}(A_2) \arrow[r,"C_{k}(f_2)"] \arrow[d,"\partial_{k}^{2}"] & \cdots \\
C_{k\minus 1}(\Vec{A}): &[-30pt] C_{k\minus 1}(A_0) \arrow[r,"C_{k\minus 1}(f_0)"] &[5pt] C_{k\minus 1}(A_1) \arrow[r,"C_{k\minus 1}(f_1)"] &[5pt] C_{k\minus 1}(A_2) \arrow[r,"C_{k\minus 1}(f_2)"] &[-10pt] \cdots
\end{tikzcd}
\end{equation}
We call the sequence $C_{k+1}(\Vec{A})\xrightarrow{\partial_{k+1}} C_k(\Vec{A})\xrightarrow{\partial_k}C_{k\minus 1}(\Vec{A})$ where $C_k(\Vec{A})$ is a persistence module and $\partial_{k}\circ\partial_{k+1}=0$ a \emph{complex of persistence modules}. The \emph{persistent homology} $H_k(\Vec{A})\coloneqq\ker \partial_k/\im\partial_{k+1}$ of the complex of persistence modules \eqref{persistent_complex} is given by the persistence module 
\begin{equation*}
\begin{tikzcd}[column sep=large]
H_k(\Vec{A}):\, H_k(A_0) \arrow[r,"H_k(f_0)"] & H_k(A_1) \arrow[r,"H_k(f_1)"] & H_k(A_2) \arrow[r,"H_k(f_2)"] & \cdots
\end{tikzcd}
\end{equation*}
where $H_k(A_i)\coloneqq\ker\partial^i_k/\im\partial^i_{k+1}$ and $H_k(f_i)$ are induced by the chain maps $C_k(f_i)$. The goal of this paper is to present an efficient general purpose algorithm to compute the homology of a complex of persistence modules and demonstrate some of its applications. 

The inspiration for our approach comes from the well-known persistence algorithm \cite{ELZ02}. Suppose the persistent object in \eqref{persistent_object} is a finite filtration of simplicial complexes, i.e.\ $A_i$ is a finite simplicial complex, $f_i$ is an inclusion and $C_\bullet(\Vec{A})$ is the complex of persistent simplicial chains. In \cite{ZC05} the authors observed that a persistence module can equivalently be viewed as a graded module over the polynomial ring $\field[t]$. Using this perspective, $C_k(\Vec{A})$ is a free module with a basis given by the $k$-simplices in the filtration and the boundary morphisms $\partial_k$ can be represented by matrices w.r.t.\ these bases. The persistence algorithm leverages this compressed representation of $C_\bullet(\Vec{A})$ by its generators and computes the persistent homology of the whole filtration at once. This makes it much more efficient than naively computing $H_k(A_i)$ for each index $i$, where simplices would be considered multiple times.  

If we allow the $f_i$'s to be arbitrary simplicial maps, we obtain what is called a \emph{simplicial tower}. It turns out that this apparently simple modification brings a significant change at the algebraic level because the persistence modules $C_k(\Vec{A})$ in the complex may no longer remain free (relations among generators may appear). Thus, in general, they do not admit a basis and we can not straightforwardly represent $\partial_k$ by matrices and compute the homology using linear algebra over $\field[t]$. In~\cite{DFW14,KS17} the authors tackle this algebraic difficulty on a topological level, by expanding a tower into a filtration and then applying efficient algorithms for filtrations. We tackle the problem directly at the algebraic level by designing algorithms
that can handle complexes of non-free modules. These algorithms enable
a further generalization obtained by additionally considering algebraic information over a simplicial tower. If this algebraic information is provided by a cosheaf, we obtain the case of persistent cosheaf homology \cite{russold}. The persistent cosheaf homology is again the homology of a complex of not necessarily free persistence modules. But cosheaf homology is not a homotopy invariant, so the methods for the plain tower~\cite{DFW14,KS17} do not work in this case. Here we are forced to tackle the problem at an algebraic level. 

One of our main observations is that we can compute the homology of a complex of non-free modules if we consider relations in addition to the generators. This brings \emph{presentations} of modules into the picture which
are morphisms from the free modules of relations to the free modules of
generators. So, we design: 1) an efficient algorithm (Section~\ref{sec_presentation_algorithm}) to compute a presentation of a morphism of persistence modules by free modules which allows us to convert a complex of persistence modules into a \emph{complex of presentations}, and 2) an efficient algorithm (Section~\ref{sec_cohomology_presentations}) to compute the homology of a complex of presentations. In the spirit of the persistence algorithm our method considers a generator only once even though it may exist over a wide range of indices. In fact, our algorithm is a direct generalization of the persistence algorithm to which it specializes in the case of free modules. 

At this point, we note that we are not aware of any computational approach
in the TDA literature that deals with complexes of persistence modules
and complexes of presentations in the full generality as our approach does. The closest along this line is the recent work in \cite{casas} where the author introduces a new framework of barcode bases and operations on them to compute a barcode basis of the homology of a complex of persistence modules in the context of distributed persistent homology computations. Our presentation algorithm does not require such a specific barcode form and can process general presentations. The paper also does not discuss how to compute barcode bases and maps between them from general morphisms of persistence modules which we address.

\textbf{Applications:} A central motivation of this paper is to provide computational tools for the ever-growing body of ideas to use methods from algebraic topology in applications. In recent years, the idea of using methods from sheaf theory in applications has gained traction in the field of TDA~\cite{BGO19,curry,ghristbook,kashiwara,robinsonbook}. Instead of just considering a space by itself, sheaves allow us to study the behaviour of data over a space. Sheaves and their cohomology have been used in various applications, see e.g.~\cite{pmlr-v196-barbero22a,NEURIPS2022_75c45fca,brown,GH11,hansen2020sheaf,HG21,robinson4}. Persistent versions of sheaves and cosheaves have also appeared in the TDA literature, see e.g.~\cite{decoratedmapper,covid,wei2022persistent}. Moreover persistent (co)sheaves have been used as a framework for a distributed computation of (persistent) homology \cite{morse,casas,yoonghrist}.
In \cite{russold}, a general theory of persistent sheaf cohomology has been developed for which this paper establishes a complete computational framework. 

We have already mentioned that our approach provides a novel efficient algorithm to compute the barcode of a given tower (Section~\ref{sec_application_tower}). We demonstrate that it provides efficient algorithms to compute various flavors of persistent cosheaf homology and sheaf cohomology. We consider the persistent homology of a cosheaf over a varying simplicial complex (Section~\ref{sec_applications_cosheaf_tower}) and the cohomology of a persistent sheaf on a fixed simplicial complex (Section~\ref{sec_pers_shv_simp}). We also show that we can reduce the computation of persistent sheaf cohomology over an arbitrary finite poset to a computation over a simplicial complex (Section~\ref{sec_poset}). 

We have a preliminary implementation of the two basic algorithms
mentioned before. Experimental results (Appendix~\ref{app_experiments}) suggest that our approach is not merely theoretical, but has the potential to be useful in practice (\url{https://github.com/TDA-Jyamiti/Algos-cplxs-pers-modules/}).



\section{Persistence modules, graded modules, and presentations} \label{sec_background}
In this section we recall basic notions of persistence modules, graded modules and their presentations. A persistence module, as depicted in the top row of \eqref{MPM}, is a functor $M\colon \mathbb{N}_0\rightarrow \mathbf{vec}$ where $\mathbf{vec}$ denotes the category of finite dimensional vector spaces. It is of \emph{finite type}, if there exists an $m\in\mathbb{N}_0$ such that $M(i\leq j)$ is an isomorphism for all $i\geq m$. A morphism of persistence modules $\phi\colon M\rightarrow N$, as depicted in \eqref{MPM},
\begin{equation} \label{MPM}
\begin{tikzcd}[column sep=huge]
M: \arrow[d,swap,shift left=-2pt,"\phi"] &[-45pt] M(0) \arrow[r,"M(0\leq 1)"] \arrow[d,"\phi(0)"] &[-5pt] M(1) \arrow[r,"M(1\leq 2)"] \arrow[d,"\phi(1)"] &[-5pt] \cdots \arrow[r,"M(m\minus 1\leq m)"] &[5pt] M(m) \arrow[d,"\phi(m)"] \arrow[r,"M(m\leq m+1)"] &[5pt] \cdots \\
N: & N(0) \arrow[r,"N(0\leq 1)"] & N(1) \arrow[r,"N(1\leq 2)"] & \cdots \arrow[r,"N(m\minus 1\leq m)"] & N(m) \arrow[r,"N(m\leq m+1)"] & \cdots
\end{tikzcd}
\end{equation}
is a natural transformation of functors $\mathbb{N}_0\rightarrow \mathbf{vec}$. Let $\mathbf{pMod}$ denote the category of persistence modules of finite type.

\begin{example} \label{exmp:morphism_persistence_modules}
The following diagram shows a concrete example of a morphism of persistence modules of finite type over $\field=\mathbb{Z}_2$ (the field with 2 elements): 
\begin{equation} \label{eq:concrete_morphism}
\begin{tikzcd}[every label/.append style = {font = \tiny}]
M \arrow[d,swap,"\phi"] &[-30pt] : &[-15pt] \mathbb{Z}_2^3 \arrow[r,"\begin{pmatrix}1 \hspace{1pt} 0 \hspace{1pt} 0 \\ 0 \hspace{1pt} 1 \hspace{1pt} 0 \\ 0 \hspace{1pt} 0 \hspace{1pt} 0 \end{pmatrix}"] \arrow[d,"\begin{pmatrix} 0 \hspace{1pt} 0 \hspace{1pt} 0 \\ 0 \hspace{1pt} 1 \hspace{1pt} 1 \\ 1 \hspace{1pt} 1 \hspace{1pt} 1 \end{pmatrix}"] & \mathbb{Z}_2^3 \arrow[r,"\begin{pmatrix}0 \hspace{1pt} 0 \hspace{1pt} 1 \\ 0 \hspace{1pt} 0 \hspace{1pt} 0 \\ 0 \hspace{1pt} 0 \hspace{1pt} 0 \end{pmatrix}"] \arrow[d,"\begin{pmatrix} 0 \hspace{1pt} 0 \hspace{1pt} 1 \\ 0 \hspace{1pt} 0 \hspace{1pt} 1 \\ 0 \hspace{1pt} 0 \hspace{1pt} 0 \end{pmatrix}"] & \mathbb{Z}_2^3 \arrow[d,"\begin{pmatrix}1 \hspace{1pt} 1 \hspace{1pt} 1 \\ 1 \hspace{1pt} 1 \hspace{1pt} 1 \\ 0 \hspace{1pt} 1 \hspace{1pt} 0 \end{pmatrix}"] \arrow[r,"\cong"] & \cdots \\
N & : & \mathbb{Z}_2^3 \arrow[r,swap,"\begin{pmatrix}1 \hspace{1pt} 0 \hspace{1pt} 0 \\ 0 \hspace{1pt} 0 \hspace{1pt} 0 \\ 0 \hspace{1pt} 0 \hspace{1pt} 0 \end{pmatrix}"] & \mathbb{Z}_2^3 \arrow[r,swap,"\begin{pmatrix}1 \hspace{1pt} 0 \hspace{1pt} 0 \\ 0 \hspace{1pt} 1 \hspace{1pt} 0 \\ 0 \hspace{1pt} 0 \hspace{1pt} 1 \end{pmatrix}"] & \mathbb{Z}_2^3 \arrow[r,"\cong"] & \cdots
\end{tikzcd}
\end{equation}
\end{example}

\noindent
An $\mathbb{N}_0$-graded $\field[t]$-module $M$ is a direct sum of $\field$-vector spaces $\bigoplus_{i\in\mathbb{N}_0}M_i$ with the usual $\field$-action on the summands and a $t^i$-action for all $i\in\mathbb{N}_0$ such that $t^i\cdot M_j\subseteq M_{i+j}$. If $m\in M_i$ it is called a homogeneous element and we define its \emph{degree} by $\text{deg}(m)\coloneqq i$. A (homogeneous) morphism $\phi\colon M\rightarrow N$ of graded $\field[t]$-modules $M$ and $N$ is a map of $\field[t]$-modules such that $\phi(M_i)\subseteq N_i$. Let $\mathbf{grMod}_{\field[t]}$ denote the category of finitely generated $\mathbb{N}_0$-graded $\field[t]$-modules. We know the following equivalence~\cite[Corollary 10]{corbet}\cite[Theorem 3.1]{ZC05} which allows us to use persistence modules and graded modules interchangeably in the following.
\begin{proposition} \label{equivalence_persmod}
$\mathbf{pMod} \cong \mathbf{grMod}_{\field[t]}$. 
\end{proposition}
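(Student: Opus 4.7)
The plan is to build an equivalence of categories by exhibiting functors in both directions and checking that they are mutually quasi-inverse, and then matching up the finiteness conditions on the two sides.

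First I would define a functor $F\colon \mathbf{pMod}\to \mathbf{grMod}_{\field[t]}$ on objects by $F(M)\coloneqq\bigoplus_{i\in\mathbb{N}_0} M(i)$, letting $\field$ act on each summand in the obvious way and letting $t$ act via the structure map, i.e.\ $t\cdot m\coloneqq M(i\leq i+1)(m)$ for $m\in M(i)$, extended linearly and by $t^k\cdot m\coloneqq M(i\leq i+k)(m)$. On morphisms, a natural transformation $\phi\colon M\to N$ is sent to the graded $\field[t]$-homomorphism $F(\phi)\coloneqq \bigoplus_i \phi(i)$; the naturality squares of $\phi$ translate exactly into the statement that $F(\phi)$ commutes with multiplication by $t$, so $F(\phi)$ is a (degree-preserving) morphism in $\mathbf{grMod}_{\field[t]}$. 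In the other direction I define $G\colon \mathbf{grMod}_{\field[t]}\to \mathbf{pMod}$ by sending a graded module $N=\bigoplus N_i$ to the persistence module whose value at $i$ is $N_i$ and whose transition map $N_i\to N_{i+1}$ is multiplication by $t$; a graded morphism of degree zero yields, degree by degree, a natural transformation.

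Next I would check $GF\cong\id_{\mathbf{pMod}}$ and $FG\cong\id_{\mathbf{grMod}_{\field[t]}}$. Both are essentially tautological: the composite $GF(M)$ is the persistence module with $i$th vector space $M(i)$ and transition map $M(i\leq i+1)$, which is $M$ itself; and $FG(N)$ reassembles the graded pieces of $N$ back into $N$, with the $t$-action recovered from the chosen transition maps. Since both composites are equal to (not just isomorphic to) the identity on objects and on morphisms, the functors $F$ and $G$ are strictly inverse, which in particular gives an equivalence of categories.

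The genuine content, and the part that needs actual argument, is that $F$ and $G$ restrict to the stated subcategories: finite type persistence modules correspond to \emph{finitely generated} graded $\field[t]$-modules. For $F$: if $M$ is of finite type with stabilization index $m$, then $M(m)$ is finite-dimensional and pick a basis; together with bases of $M(0),\dots,M(m-1)$ (which are finite-dimensional by definition of $\mathbf{vec}$) this produces a finite set of homogeneous generators of $F(M)$, since every element of $M(i)$ for $i\geq m$ is $t^{i-m}$ times an element of $M(m)$. For $G$: if $N$ is finitely generated, then every graded piece $N_i$ is spanned by $t^{i-d_k}g_k$ over generators $g_k$ of degree $d_k\leq i$, hence finite-dimensional, so $G(N)$ lands in $\mathbf{vec}$. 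Moreover $N$ is a finitely generated module over the PID $\field[t]$, so by the graded structure theorem it splits as a finite sum of shifted copies of $\field[t](d)$ and $\field[t]/(t^n)(d)$; on each free summand multiplication by $t$ is an isomorphism in all degrees $\geq d$, and each torsion summand vanishes in degrees $\geq d+n$, so choosing $m$ larger than all relevant $d$ and $d+n$ gives transition maps $G(N)(i\leq j)$ that are isomorphisms for $i\geq m$.

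The main obstacle is precisely this last point, the matching of the two finiteness conditions; the rest is a formal unwinding of definitions. Alternatively one can avoid invoking the structure theorem by a direct Noetherian argument: the kernel and cokernel of multiplication by $t$ on $N$ are finitely generated graded $\field[t]$-modules, hence eventually zero in high degrees, which again forces $G(N)$ to be of finite type.
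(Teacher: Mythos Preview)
Your argument is correct. The paper does not actually supply its own proof of this proposition: it is stated as a known result with citations to \cite[Corollary 10]{corbet} and \cite[Theorem 3.1]{ZC05}. What you have written is precisely the standard construction underlying those references---form the graded module by summing the vector spaces and letting $t$ act by the transition maps, and go back by reading off graded pieces---together with the verification that ``finite type'' on the persistence side matches ``finitely generated'' on the graded-module side. Your check of the finiteness conditions (using either the graded structure theorem or the Noetherian argument on $\ker$ and $\mathrm{coker}$ of multiplication by $t$) is the only substantive step, and both versions you give are valid. So there is nothing to correct; you have simply filled in what the paper outsources to the literature.
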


\noindent
In what follows, we assume that all persistence modules and graded modules are of finite type or finitely generated, respectively. Persistence modules as well as morphisms between them can be represented by collections of matrices (Example \ref{exmp:morphism_persistence_modules}). Their equivalent counterpart graded $\field[t]$-modules can not be handled by matrices directly since they can have torsion in general. To handle them with methods of linear algebra over $\field[t]$ we use presentations.    

\begin{definition}[Presentation of module] \label{def:pre_module}
Let $M$ be a finitely generated graded $\field[t]$-module. A presentation of $M$ is an exact sequence of the form 
\begin{equation*} \label{pres_module}
\begin{tikzcd}
0 & M \arrow[l]  & P_0 \arrow[l,swap,"\mu"] & P_1 \arrow[l,swap,"p"]  
\end{tikzcd}
\end{equation*}
where $P_0$ and $P_1$ are free finitely generated graded $\field[t]$-modules. We call a presentation \emph{reduced}, if $p$ is a monomorphism.
\end{definition}

\begin{definition}[Presentation of morphism]
Let $\phi\colon M\rightarrow N$ be a morphism of finitely generated graded $\field[t]$-modules $M$ and $N$. A presentation of $\phi$ is a commutative diagram 
\begin{equation}
\begin{tikzcd} \label{presentation}
0 & M \arrow[l] \arrow[d,"\phi"] &  P_0 \arrow[l,swap,"\mu"] \arrow[d,"f_0"] & P_1 \arrow[l,swap,"p"] \arrow[d,"f_1"] \\
0 & N \arrow[l] &  Q_0 \arrow[l,swap,"\nu"] &  Q_1 \arrow[l,swap,"q"]
\end{tikzcd}
\end{equation}
where the rows are presentations of $M$ and $N$ respectively.
\end{definition}

\noindent
In the following, we also refer to the morphism of free modules $P_0\xleftarrow{p} P_1$ in Definition \ref{def:pre_module} as a presentation of $M$. By the exactness assumption $M\cong \text{coker }p$ and $\mu\cong (P_0\xrightarrow{\pi} \text{coker }p)$. We also refer to the right square in \eqref{presentation} as a morphism of presentations.

Given $a< b\in\mathbb{N}_0\cup\{\infty\}$, let $\mathbb{I}_{[a,\infty)}$ denote the free graded $\field[t]$-module generated by a single generator of degree $a$. Moreover, we denote by $\mathbb{I}_{[a,b)}$ the quotient module $\mathbb{I}_{[a,\infty)}/ \mathbb{I}_{[b,\infty)}$. By the equivalence of Proposition \ref{equivalence_persmod}, $\mathbb{I}_{[a,b)}$ corresponds to the indecomposable interval persistence module starting at index $a$ and ending at index $b$. By the Theorems of Krull-Remak-Schmidt~\cite[Theorem 1]{azumaya1950corrections} and Gabriel~\cite[Chapter 2.2]{Gabriel72} and Proposition \ref{equivalence_persmod}, every finitely generated graded $\field[t]$-module $M$ is isomorphic to a finite direct sum of indecomposable interval modules $M\cong\bigoplus_{i=1}^d \mathbb{I}_{[a_i,b_i)} \hspace{2pt}$. We call the multiset of intervals $\{[a_i,b_i)\vert 1\leq i\leq d\}$ the \emph{barcode} of $M$. 

\begin{example}\label{expm:barcodes}
For the persistence modules $M$ and $N$ in Example \ref{exmp:morphism_persistence_modules} we get $M\cong \mathbb{I}_{[0,2)}\oplus \mathbb{I}_{[0,2)}\oplus \mathbb{I}_{[0,1)}\oplus \mathbb{I}_{[1,\infty)}\oplus \mathbb{I}_{[2,\infty)}\oplus \mathbb{I}_{[2,\infty)}$ and $N\cong\mathbb{I}_{[0,\infty)}\oplus \mathbb{I}_{[0,1)}\oplus \mathbb{I}_{[0,1)}\oplus \mathbb{I}_{[1,\infty)}\oplus \mathbb{I}_{[1,\infty)}$. This can be inferred from the ranks of the matrices in the upper and lower row. In the upper row we have $\text{rank }M(0\leq 1)=2$, $\text{rank }M(1\leq 2)=1$ and $\text{rank }M(0\leq 2)=0$. This implies that only two of the three bars starting in index 0 reach index 1 and none of them reaches index 2. Moreover, only one of the three bars at index 1 reaches index 2.  
\end{example}

\noindent
For two interval modules $\mathbb{I}_{[a,b)}$ and $\mathbb{I}_{[c,d)}$ we have
\begin{equation} \label{hom_pmod}
\text{Hom}(\mathbb{I}_{[a,b)},\mathbb{I}_{[c,d)})\cong \begin{cases} \field \hspace{5pt} \text{if } c\leq a<d\leq b \\ 0 \hspace{5pt} \text{else} \end{cases} \hspace{2pt} .
\end{equation}
In other words, if two bars overlap in a certain way as stated in \eqref{hom_pmod}, then, for every scalar $\lambda\in \field$, there is a unique morphism defined by multiplication with $\lambda$ where the bars overlap. Moreover, given two finitely generated graded modules $M$ and $N$, we have
\begin{equation} \label{hom_sum}
\text{Hom}(M,N)\cong \text{Hom}(\bigoplus_i \mathbb{I}_{[a_i,b_i)},\bigoplus_j \mathbb{I}_{[c_j,d_j)})\cong \bigoplus_i\bigoplus_j\text{Hom}(\mathbb{I}_{[a_i,b_i)},\mathbb{I}_{[c_j,d_j)}) \hspace{2pt} .
\end{equation}
We call the following reduced presentation of $\mathbb{I}_{[a,b)}$ an \emph{elementary presentation}
\begin{equation} \label{elemenatry_presentation}
\begin{tikzcd}
0 & \mathbb{I}_{[a,b)} \arrow[l] & \mathbb{I}_{[a,\infty)} \arrow[l,swap] &[10pt] \mathbb{I}_{[b,\infty)} \arrow[l,swap,"\cdot t^{b\minus a}"] & 0 \arrow[l]
\end{tikzcd}
\end{equation}
where we set $\mathbb{I}_{[b,\infty)}$ and the map to zero if $b=\infty$.
The direct sum of presentations is defined by the pointwise direct sum of graded modules. Thus, assuming $b_i\neq \infty$ iff $1\leq i \leq d'\leq d$, we obtain the following reduced presentation of $M$ (up to isomorphism) 
\begin{equation*}
\begin{tikzcd}
0 & \bigoplus_{i=1}^d \mathbb{I}_{[a_i,b_i)} \arrow[l] & \bigoplus_{i=1}^d \mathbb{I}_{[a_i,\infty)} \arrow[l,swap] & \bigoplus_{i=1}^{d'} \mathbb{I}_{[b_i,\infty)} \arrow[l,swap,"p"] & 0 \arrow[l]
\end{tikzcd}
\end{equation*}
as a direct sum of elementary presentations where $p$ is of the form 
\begin{equation} \label{pform}
p=\begin{pmatrix}
t^{b_1\minus a_1} & 0 & \cdots & 0  \\
0 & t^{b_2\minus a_2} & \cdots  & 0  \\
\vdots & \vdots & \ddots & \vdots  \\
0 & 0 & \cdots  & t^{b_{d'}\minus a_{d'}}  \\
0 & 0 & \cdots  & 0  \\
\vdots & \vdots & \ddots & \vdots  \\
0 & 0 & \cdots  & 0  \\
\end{pmatrix}.
\end{equation}

\noindent
We call a presentation \emph{canonical} if it is a direct sum of elementary presentations and has no elementary summands \eqref{elemenatry_presentation} with $a=b$. The matrix $p$ of a canonical presentation has a form as in \eqref{pform}, i.e.\ every relation in $P_1$ maps to a unique generator in $P_0$. We obtain a natural inclusion and projection of elementary summands, i.e.\ a commutative diagram of the form: 
\begin{equation*}
\begin{tikzcd}
0 & \mathbb{I}_{[a_i,b_i)} \arrow[l] \arrow[d,"\iota"]& \mathbb{I}_{[a_i,\infty)} \arrow[l,swap] \arrow[d,"\iota_0"] &[10pt] \mathbb{I}_{[b_i,\infty)} \arrow[l,swap,"\cdot t^{b_i\minus a_i}"]  \arrow[d,"\iota_1"] & 0 \arrow[l] \\
0 & \underset{l}{\bigoplus} \mathbb{I}_{[a_l,b_l)} \arrow[l] \arrow[d,"\pi"] & \underset{l}{\bigoplus} \mathbb{I}_{[a_l,\infty)} \arrow[l,swap] \arrow[d,"\pi_0"] & \underset{l}{\bigoplus} \mathbb{I}_{[b_l,\infty)} \arrow[l,swap,"p"] \arrow[d,"\pi_1"] & 0 \arrow[l] \\
0 & \mathbb{I}_{[a_j,b_j)} \arrow[l] & \mathbb{I}_{[a_j,\infty)} \arrow[l,swap] & \mathbb{I}_{[b_j,\infty)} \arrow[l,swap,"\cdot t^{b_j\minus a_j}"]  & 0 \arrow[l] 
\end{tikzcd}
\end{equation*}
Presentations of finitely generated graded $\field[t]$-modules and their morphisms can be represented by the matrices corresponding to the morphisms of free modules $p,q,f_0,f_1$ in \eqref{presentation} with labeled rows and columns recording the degree of the generators. 

\begin{example} \label{exmp:presentation}
In Example \ref{expm:barcodes} we computed the barcodes of the persistence modules $M$ and $N$ in Example \ref{exmp:morphism_persistence_modules}. To get a presentation of $M$ we put a generator $\mathbb{I}_{[a,\infty)}$ for every interval module $\mathbb{I}_{[a,b)}$ and a relation $\mathbb{I}_{[b,\infty)}$ for every interval module with $b<\infty$ such that $\mathbb{I}_{[b,\infty)}$ maps to $\mathbb{I}_{[a,\infty)}$. Thus, we obtain $P_0=\mathbb{I}_{[0,\infty)}\oplus \mathbb{I}_{[0,\infty)}\oplus \mathbb{I}_{[0,\infty)}\oplus \mathbb{I}_{[1,\infty)}\oplus \mathbb{I}_{[2,\infty)}\oplus \mathbb{I}_{[2,\infty)}$ and $P_1=\mathbb{I}_{[2,\infty)}\oplus \mathbb{I}_{[2,\infty)}\oplus \mathbb{I}_{[1,\infty)}$. Analogously, for the generators $Q_0$ and relations $Q_1$ of $N$ we obtain $Q_0=\mathbb{I}_{[0,\infty)}\oplus \mathbb{I}_{[0,\infty)}\oplus \mathbb{I}_{[0,\infty)}\oplus \mathbb{I}_{[1,\infty)}\oplus \mathbb{I}_{[1,\infty)}$ and $Q_1=\mathbb{I}_{[1,\infty)}\oplus \mathbb{I}_{[1,\infty)}$. The following diagram is a presentation of the morphism $\phi\colon M\rightarrow N$ in \eqref{eq:concrete_morphism} 
\begin{equation*}
\begin{tikzcd}[ampersand replacement=\&,every label/.append style = {font = \tiny},column sep=large, row sep=large]
0 \&[-10pt] M \arrow[d,"\phi"] \arrow[l] \& P_0 \arrow[l] \arrow{d}[xshift=-2pt,yshift=-3pt]{\begin{blockarray}{c@{\hspace{7pt}}cccccc}
& 0 & 0 & 0 & 1 & 2 & 2 \\
\begin{block}{c@{\hspace{7pt}}(cccccc)}
0 & 0 & 0 & 0 & t & t^2 & t^2 \\
0 & 0 & 1 & 1 & 0 & 0 & 0 \\
0 & 1 & 1 & 1 & 0 & 0 & 0 \\
1 & 0 & 0 & 0 & 1 & t & t \\
1 & 0 & 0 & 0 & 0 & t & 0 \\
\end{block}
\end{blockarray}} \&[60pt] P_1 \arrow{l}[swap,xshift=-2pt,yshift=-3pt]{\begin{blockarray}{c@{\hspace{7pt}}ccc}
& 2 & 2 & 1  \\
\begin{block}{c@{\hspace{7pt}}(ccc)}
0 & t^2 & 0 & 0 \\
0 & 0 & t^2 & 0 \\
0 & 0 & 0 & t \\
1 & 0 & 0 & 0 \\
2 & 0 & 0 & 0 \\
2 & 0 & 0 & 0 \\
\end{block}
\end{blockarray}} \arrow{d}[xshift=-2pt,yshift=-3pt]{\begin{blockarray}{c@{\hspace{7pt}}cccccc}
& 2 & 2 & 1 \\
\begin{block}{c@{\hspace{7pt}}(cccccc)}
1 & 0 & t & 1 \\
1 & t & t & 1 \\
\end{block}
\end{blockarray}} \&[-10pt] 0 \arrow[l] \\[20pt]
0 \& N \arrow[l] \& Q_0 \arrow[l] \& Q_1 \arrow{l}[xshift=-2pt,yshift=-3pt]{\begin{blockarray}{c@{\hspace{7pt}}cc}
& 1 & 1  \\
\begin{block}{c@{\hspace{7pt}}(cc)}
0 & 0 & 0 \\
0 & t & 0 \\
0 & 0 & t \\
1 & 0 & 0 \\
1 & 0 & 0 \\
\end{block}
\end{blockarray}}  \& 0 \arrow[l] 
\end{tikzcd}
\end{equation*}
where the vertical matrices can be inferred by following the generators corresponding to the bars in \eqref{eq:concrete_morphism}. 
\end{example}

\noindent
When recording the degree of the generators, we can use coefficients in $\field$ instead of coefficients in $\field[t]$ since the degree of column and row determines the polynomial factor $t^r$ of an entry. If we have a morphism of presentations as in \eqref{presentation} where $p$ and $q$ are in canonical form, then, by the structure of $p$ and $q$ and by commutativity, $f_1$ is uniquely determined by $f_0$. To represent such a morphism of presentations it is enough to store the matrix $f_0$ and for each row and column the degree of the generator in $P_0$ or $Q_0$ and its unique corresponding relation in $P_1$ or $Q_1$.
\begin{example} \label{exmp:kcoef_pres}
The following matrix completely represents the presentation in example \ref{exmp:presentation}:
\begin{equation*} \footnotesize
\begin{blockarray}{ccccccc}
& \text{[}0,2) & \text{[}0,2) & \text{[}0,1) & \text{[}1,\infty) & \text{[}2,\infty) & \text{[}2,\infty) \\
\begin{block}{c(cccccc)}
\text{[}0,\infty) & 0 & 0 & 0 & 1 & 1 & 1 \\
\text{[}0,1) & 0 & 1 & 1 & 0 & 0 & 0 \\
\text{[}0,1) & 1 & 1 & 1 & 0 & 0 & 0 \\
\text{[}1,\infty) & 0 & 0 & 0 & 1 & 1 & 1 \\
\text{[}1,\infty) & 0 & 0 & 0 & 0 & 1 & 0 \\
\end{block}
\end{blockarray}    
\end{equation*}
\end{example}


\section{Computing homology of complexes of presentations} \label{sec_cohomology_presentations}

In this section we develop an algorithm that computes the barcode of the homology of a complex of finitely generated graded modules where we assume that the complex is given by presentations. Consider the following sequence of finitely generated graded $\field[t]$-modules
\begin{equation*} \label{cochain_comp_mod}
L\xlongrightarrow{\phi} M \xlongrightarrow{\psi} N
\end{equation*}
where $\psi\circ\phi=0$. Suppose that we are given reduced presentations of this sequence, i.e., a commutative diagram with exact rows of the form:
\begin{equation} \label{comp_presentations}
\begin{tikzcd}
0 & L \arrow[l] \arrow[d,"\phi"] & P_0 \arrow[l,swap,"\lambda"] \arrow[d,"f_0"] & P_1 \arrow[l,swap,"p"] \arrow[d,"f_1"] & 0 \arrow[l] \\
0 & M \arrow[l] \arrow[d,"\psi"] & Q_0 \arrow[l,swap,"\mu"] \arrow[d,"g_0"] & Q_1 \arrow[l,swap,"q"] \arrow[d,"g_1"] & 0 \arrow[l] \\
0 & N \arrow[l] & R_0 \arrow[l,swap,"\nu"] & R_1 \arrow[l,swap,"r"] & 0 \arrow[l] 
\end{tikzcd}
\end{equation}
such that $g_0\circ f_0=0$ and $g_1\circ f_1=0$. The condition $g_0\circ f_0=0$ and $g_1\circ f_1=0$ means that \eqref{comp_presentations} is a complex of presentations. In Example \ref{homology_algo_example} one can find a concrete instance of such a complex of presentations. Our goal is to compute a presentation of $\ker\psi / \im\phi$ from the presentations in \eqref{comp_presentations}, i.e., an exact sequence of the form:
\begin{equation*}
\begin{tikzcd}
0 & \ker\psi / \im\phi \arrow[l] & W_0 \arrow[l] & W_1 \arrow[l,swap,"w"] 
\end{tikzcd}
\end{equation*}
where $W_0\xleftarrow{w} W_1$ is a morphism of free modules. We first construct a presentation of $\ker\psi$ through the maps
\begin{equation*}
\begin{aligned}
\begin{pmatrix}g_0 & \minus r\end{pmatrix}&\colon Q_0\times R_1\rightarrow R_0 \hspace{3pt},\hspace{5pt} \begin{pmatrix}g_0 & \minus r\end{pmatrix}(x,y)\coloneqq g_0(x)-r(y) \\
\begin{pmatrix}q \\ g_1\end{pmatrix}&\colon Q_1\rightarrow Q_0\times R_1 \hspace{3pt},\hspace{5pt} \begin{pmatrix}q \\ g_1\end{pmatrix}(a)\coloneqq (q(a),g_1(a))
\end{aligned} 
\end{equation*}
where $\ker \begin{pmatrix}g_0 & \minus r\end{pmatrix}=\{(x,y)\in Q_0\times R_1\hspace{3pt}|\hspace{3pt} g_0(x)=r(y)\}$. Note that the kernel of a morphism of free modules is again a free module. Moreover, we define the map 
\begin{equation*}
\begin{aligned}
\kappa&\colon \ker(g_0\minus r)\rightarrow M \hspace{3pt},\hspace{5pt} \kappa(x,y)\coloneqq \mu(x) \hspace{1pt}.
\end{aligned} 
\end{equation*}

\begin{proposition} \label{presentation_kernel}
The following exact sequence
\begin{equation*}
\begin{tikzcd}
0 & \ker\psi \arrow[l] & \ker(g_0\minus r) \arrow[l,swap,"\kappa"] &[15pt] Q_1 \arrow[l,swap,"\begin{pmatrix}q \\ g_1\end{pmatrix}"]
\end{tikzcd} 
\end{equation*}
is a presentation of $\ker\psi$.
\end{proposition}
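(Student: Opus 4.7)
The plan is to verify the two standard conditions that make a three-term sequence a presentation: exactness at $\ker\psi$ (so that $\kappa$ is surjective with image $\ker\psi$), exactness at $\ker(g_0\minus r)$ (so that $\ker\kappa = \im \binom{q}{g_1}$), and freeness of the two middle graded modules. Since $\field[t]$ is a graded PID, the graded submodule $\ker(g_0\minus r)$ of the free module $Q_0\oplus R_1$ is automatically free, and $Q_1$ is free by assumption, so the content of the proposition is really the exactness claim.

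First I would check that the maps are well defined in the way the statement requires. For $(x,y)\in\ker(g_0\minus r)$, one has $\psi(\kappa(x,y))=\psi\mu(x)=\nu g_0(x)=\nu r(y)=0$ using commutativity of the left two squares of \eqref{comp_presentations} and exactness $\nu\circ r=0$; hence $\kappa$ lands in $\ker\psi$. For $a\in Q_1$, commutativity of the bottom-right square gives $g_0q(a)-r g_1(a)=0$, so $\binom{q}{g_1}(a)\in\ker(g_0\minus r)$.

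Next I would prove surjectivity of $\kappa$ onto $\ker\psi$. Take $m\in\ker\psi$. Since $\mu$ is an epimorphism (exactness of the middle row of \eqref{comp_presentations}), choose $x\in Q_0$ with $\mu(x)=m$. Then $\nu(g_0(x))=\psi\mu(x)=\psi(m)=0$, so by exactness of the bottom row $g_0(x)\in\im r$; pick $y\in R_1$ with $r(y)=g_0(x)$. Then $(x,y)\in\ker(g_0\minus r)$ and $\kappa(x,y)=\mu(x)=m$.

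Finally I would prove $\ker\kappa=\im\binom{q}{g_1}$. The inclusion $\supseteq$ follows from $\mu\circ q=0$. For the reverse, let $(x,y)\in\ker(g_0\minus r)$ with $\mu(x)=0$. By exactness of the middle row, $x=q(a)$ for some $a\in Q_1$. Then $r(y)=g_0(x)=g_0 q(a)=r g_1(a)$, so $r(y-g_1(a))=0$. Here is the only delicate point: we need $y=g_1(a)$, and this is exactly where we must invoke the hypothesis that the bottom presentation is \emph{reduced}, i.e.\ $r$ is a monomorphism. With that, $y=g_1(a)$ and $(x,y)=\binom{q}{g_1}(a)$. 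I expect this reducedness step to be the main obstacle in the sense that, were $r$ not injective, the kernel of $\kappa$ could be strictly larger than $\im\binom{q}{g_1}$ by the amount $\ker r$ contributes in the second coordinate, and the sequence would fail to be a presentation on the nose.
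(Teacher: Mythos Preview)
Your proof is correct and follows essentially the same approach as the paper: both verify well-definedness of $\kappa$ and $\binom{q}{g_1}$, surjectivity of $\kappa$ via lifting through $\mu$ and then through $r$, and the equality $\ker\kappa=\im\binom{q}{g_1}$ using $\ker r=0$ from reducedness. Your explicit remark that freeness of $\ker(g_0\minus r)$ comes from $\field[t]$ being a graded PID, and your emphasis on where reducedness of $r$ is essential, are helpful additions that the paper leaves more implicit.
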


\begin{proof}
\setstretch{1.2}
\begin{enumerate}
\item Let $(x,y)\in\ker \begin{pmatrix}g_0 & \minus r \end{pmatrix}$, then $\psi\circ \kappa(x,y)=\psi\circ \mu (x)=\nu\circ g_0(x)=\nu\circ r(y)=0$. Thus, $\im\kappa\subseteq  \ker\psi$ and $\kappa\colon\ker \begin{pmatrix}g_0 & \minus r \end{pmatrix}\rightarrow \ker\psi$.
\item Let $a\in\ker\psi$. Since $\mu$ is an epimorphism, $\exists x\in Q_0$ s.t.\ $\mu(x)=a$ and $\psi\circ \mu(x)=0=\nu\circ g_0(x)$. We get $g_0(x)\in\ker \nu=\im r$ and $\exists y\in R_1$ s.t.\ $r(y)=g_0(x)$. Thus, $\begin{pmatrix}g_0 & \minus r \end{pmatrix}(x,y)=0$ and $\kappa(x,y)=\mu(x)=a$. Hence, $\kappa$ is an epimorphism.
\item Let $z\in Q_1$, then $\begin{pmatrix}g_0 & \minus r\end{pmatrix}\circ \begin{pmatrix}q \\ g_1\end{pmatrix}(z)=g_0\circ q(z)-r\circ g_1(z)=0$. Thus, $\im \begin{pmatrix}q \\ g_1\end{pmatrix}\subseteq \ker \begin{pmatrix}g_0 & \minus r\end{pmatrix}$ and $\begin{pmatrix}q \\ g_1\end{pmatrix}\colon Q_1\rightarrow \ker \begin{pmatrix}g_0 & \minus r \end{pmatrix}$.
\item Let $(x,y)\in \ker \kappa$. Then, $\kappa(x,y)=\mu(x)=0$ and $x\in\ker \mu=\im q$. Thus, $\exists a\in Q_1$ s.t.\ $q(a)=x$. Since, $g_0(x)=g_0\circ q(a)=r\circ g_1(a)=r(y)$, we get $g_1(a)-y\in\ker r=0$. This implies, $g_1(a)=y$ and $\begin{pmatrix}q \\ g_1\end{pmatrix}(a)=(q(a),g_1(a))=(x,y)$. Hence, $\ker \kappa\subseteq \im \begin{pmatrix}q \\ g_1\end{pmatrix}$.

Conversely, let $(x,y)\in\im \begin{pmatrix}q \\ g_1\end{pmatrix}$. Then $\exists a\in Q_1$ s.t.\ $\begin{pmatrix}q \\ g_1\end{pmatrix}(a)=(q(a),g_1(a))=(x,y)$. We get $\kappa(x,y)=\kappa(q(a),g_1(a))=\mu\circ q(a)=0$. Hence, $(x,y)\in\ker \kappa$ and $\ker \kappa=\im \begin{pmatrix}q \\ g_1\end{pmatrix}$.
\end{enumerate}
\end{proof}

\noindent
Next we show that the presentations of $L$, $\ker\psi$ and $\phi$ are compatible via the map
\begin{equation*}
\begin{pmatrix}f_0 \\ 0\end{pmatrix}\colon P_0\rightarrow Q_0\times R_1 \hspace{3pt},\hspace{5pt} \begin{pmatrix}f_0 \\ 0\end{pmatrix}(x)\coloneqq(f_0(x),0) \hspace{1pt}.
\end{equation*}

\begin{proposition} \label{kernel_lemma}
The following diagram commutes:
\begin{equation*}
\begin{tikzcd}
0 & L \arrow[l] \arrow[d,"\phi"] &[10pt] P_0 \arrow[l,swap,"\lambda"] \arrow[d,"\begin{pmatrix}f_0 \\ 0\end{pmatrix}"] &[15pt] P_1 \arrow[l,swap,"p"] \arrow[d,"f_1"] \\[10pt]
0 & \ker\psi \arrow[l] & \ker(g_0\minus r) \arrow[l,swap,"\kappa"] & Q_1 \arrow[l,swap,"\begin{pmatrix}q \\ g_1\end{pmatrix}"]
\end{tikzcd} 
\end{equation*}
\end{proposition}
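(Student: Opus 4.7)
The plan is to verify the commutativity of the two squares in the diagram by direct element-chasing, but first I would need to check that the middle vertical map $\binom{f_0}{0}\colon P_0\to Q_0\times R_1$ actually factors through the submodule $\ker(g_0-r)$, since the diagram as drawn lands in that kernel. For any $x\in P_0$, evaluating gives $(g_0\;{-}r)(f_0(x),0) = g_0\circ f_0(x) - r(0) = g_0\circ f_0(x)$, which vanishes by the hypothesis $g_0\circ f_0 = 0$ built into the complex of presentations \eqref{comp_presentations}. So $\binom{f_0}{0}$ is a well-defined map of free modules $P_0\to\ker(g_0-r)$.

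For the right square, I take $z\in P_1$ and compute both composites. On one side, $\binom{f_0}{0}\circ p(z) = (f_0\circ p(z),0)$. On the other, $\binom{q}{g_1}\circ f_1(z) = (q\circ f_1(z),\,g_1\circ f_1(z))$. The first coordinates agree by commutativity of the middle square of \eqref{comp_presentations}, which gives $f_0\circ p = q\circ f_1$. The second coordinates agree because $g_1\circ f_1 = 0$ is part of the hypothesis that \eqref{comp_presentations} is a complex of presentations.

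For the left square, I take $x\in P_0$ and compute $\kappa\circ\binom{f_0}{0}(x) = \kappa(f_0(x),0) = \mu\circ f_0(x)$ by definition of $\kappa$. By commutativity of the left square of \eqref{comp_presentations}, $\mu\circ f_0 = \phi\circ \lambda$, so this equals $\phi\circ\lambda(x)$, as required.

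There is no real obstacle here; the proposition is essentially a bookkeeping statement that packages together the two complex-conditions ($g_0\circ f_0=0$ and $g_1\circ f_1=0$) with the two commutativity conditions already encoded in the given presentation of $\phi$. The only subtlety worth flagging in the write-up is the well-definedness check, since the lower-right entry of the target is the submodule $\ker(g_0-r)$ rather than $Q_0\times R_1$; once that is in place the two squares reduce to one-line verifications.
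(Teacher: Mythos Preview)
Your proof is correct and follows essentially the same three-step structure as the paper's own proof: first verifying that $\binom{f_0}{0}$ lands in $\ker(g_0\ \minus r)$ using $g_0\circ f_0=0$, then checking the right square via $q\circ f_1=f_0\circ p$ and $g_1\circ f_1=0$, and finally the left square via $\mu\circ f_0=\phi\circ\lambda$. The arguments are identical up to notation.
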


\begin{proof}
\setstretch{1.2}
\begin{enumerate}
\item Let $x\in P_0$. Since $g_0\circ f_0=0$, we have $\begin{pmatrix}g_0 & \minus r \end{pmatrix}(f_0(x),0)=g_0(f_0(x))=0$. Thus, $\im\begin{pmatrix}f_0 \\ 0\end{pmatrix}\subseteq \ker \begin{pmatrix}g_0 & \minus r \end{pmatrix}$ and $\begin{pmatrix}f_0 \\ 0\end{pmatrix}\colon P_0\rightarrow \ker \begin{pmatrix}g_0 & \minus r \end{pmatrix}$.
\item Let $x\in P_1$. Since $g_1\circ f_1=0$ we get $\begin{pmatrix}q \\ g_1\end{pmatrix}\circ f_1(x)=\big(q(f_1(x)),g_1(f_1(x))\big)=\big(f_0(p(x)),0\big)=\begin{pmatrix}f_0 \\ 0\end{pmatrix}\circ p(x)$. Hence, the right square commutes.
\item Let $x\in P_0$. Then $\kappa\circ \begin{pmatrix}f_0 \\ 0\end{pmatrix}(x)=\kappa(f_0(x),0)=\mu(f_0(x))=\phi\circ \lambda(x)$. Thus, also the left square commutes.
\end{enumerate}
\end{proof}

\noindent
Finally, we define the presentation map from the relations to the generators 
\begin{equation*}
\begin{pmatrix}f_0 & \minus q \\ 0 & \minus g_1\end{pmatrix}\colon P_0\times Q_1\rightarrow \ker(g_0\minus r) \hspace{3pt},\hspace{5pt} \begin{pmatrix}f_0 & \minus q \\ 0 & \minus g_1\end{pmatrix}(x,y)\coloneqq \begin{pmatrix}f_0 \\ 0\end{pmatrix}(x)-\begin{pmatrix}q \\ g_1\end{pmatrix}(y)
\end{equation*}
and denote by $\pi\colon \ker\psi\rightarrow \ker\psi/\im\phi$ the quotient map.

\begin{theorem} \label{thm_cohomology_presentation}
The following exact sequence 
\begin{equation*}
\begin{tikzcd}
0 & \ker\psi / \im\phi \arrow[l] &[5pt] \ker(g_0\minus r) \arrow[l,swap,"\pi\circ \kappa"] &[35pt] P_0\times Q_1 \arrow[l,swap,"\begin{pmatrix}f_0 \hspace{4pt} \minus q \\ \hspace{2pt} 0 \hspace{6pt} \minus g_1\end{pmatrix}"]
\end{tikzcd} 
\end{equation*}
is a presentation of $\ker\psi/\im\phi$.
\end{theorem}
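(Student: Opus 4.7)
The plan is to show the two defining properties of a presentation: that the left map $\pi\circ\kappa$ is an epimorphism with codomain $\ker\psi/\im\phi$, and that the kernel of $\pi\circ\kappa$ coincides with the image of the matrix $\begin{pmatrix}f_0 & -q \\ 0 & -g_1\end{pmatrix}$.

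The epimorphism claim is the easy warm-up: by Proposition~\ref{presentation_kernel}, $\kappa\colon\ker(g_0-r)\to\ker\psi$ is surjective, and composing with the canonical projection $\pi\colon\ker\psi\to\ker\psi/\im\phi$ preserves surjectivity. For the inclusion $\im\begin{pmatrix}f_0 & -q \\ 0 & -g_1\end{pmatrix}\subseteq \ker(\pi\circ\kappa)$, I compute $\kappa(f_0(x)-q(y),-g_1(y)) = \mu(f_0(x)-q(y)) = \mu\circ f_0(x) = \phi\circ\lambda(x)$ using Proposition~\ref{kernel_lemma} and $\mu\circ q=0$, so the result lies in $\im\phi$ and is killed by $\pi$.

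The main content — and the step I expect to require the reducedness hypothesis — is the reverse inclusion $\ker(\pi\circ\kappa)\subseteq \im\begin{pmatrix}f_0 & -q \\ 0 & -g_1\end{pmatrix}$. Fix $(a,b)\in\ker(g_0-r)$ with $\pi\circ\kappa(a,b)=0$, i.e.\ $\mu(a)\in\im\phi$. I chase this element through the diagram: since $\lambda$ is an epimorphism and the left square of Proposition~\ref{kernel_lemma} commutes, there exists $x\in P_0$ with $\phi\circ\lambda(x)=\mu(a)$, hence $a-f_0(x)\in\ker\mu=\im q$, so I pick $y\in Q_1$ with $q(y)=a-f_0(x)$. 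This already gives the first component: $f_0(x)-q(-y)=a$.

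To match the second component I must verify $-g_1(-y)=b$, and this is exactly where reducedness enters. On the one hand, $(a,b)\in\ker(g_0-r)$ gives $g_0(a)=r(b)$; on the other hand, using $g_0\circ f_0=0$ and $g_0\circ q=r\circ g_1$, I get $g_0(a)=g_0(f_0(x))+g_0(q(y))=r(g_1(y))$. Therefore $r(g_1(y)-b)=0$, and since the presentation of $N$ is reduced, $r$ is a monomorphism, forcing $g_1(y)=b$. Thus $(x,-y)\in P_0\times Q_1$ maps to $(a,b)$, completing the proof.
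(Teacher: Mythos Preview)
Your proof is correct and follows essentially the same diagram chase as the paper. The only organizational difference is in the hard inclusion $\ker(\pi\circ\kappa)\subseteq\im\begin{pmatrix}f_0 & -q\\0 & -g_1\end{pmatrix}$: the paper invokes the full equality $\ker\kappa=\im\begin{pmatrix}q\\g_1\end{pmatrix}$ from Proposition~\ref{presentation_kernel}, which delivers both coordinates of the preimage simultaneously, whereas you use only $\ker\mu=\im q$ for the first coordinate and then verify the second coordinate by a direct appeal to the injectivity of $r$. That injectivity is exactly what the paper used inside the proof of Proposition~\ref{presentation_kernel} to establish $\ker\kappa=\im\begin{pmatrix}q\\g_1\end{pmatrix}$, so you have simply unpacked that step in place rather than citing it.
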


\begin{proof}
\setstretch{1.2}
\begin{enumerate}
\item Since $\kappa$ is an epimorphism by Proposition
\ref{presentation_kernel} and $\pi$ is an epimorphism as a quotient map, $\pi\circ \kappa$ is also an epimorphism.
\item Let $(x,y)\in \ker(\pi\circ \kappa)$ i.e.\ $\pi\circ \kappa(x,y)=0$ and $\kappa(x,y)\in\ker\pi=\im\phi$. Hence, $\exists a\in L$ s.t.\ $\phi(a)=\kappa(x,y)$ and, moreover, $\exists b\in P_0$ s.t.\ $\lambda(b)=a$. By Proposition
\ref{kernel_lemma}, we get $\phi\circ \lambda(b)=\kappa\circ \begin{pmatrix}f_0 \\ 0\end{pmatrix}(b)=\kappa(x,y)$ and $\begin{pmatrix}f_0 \\ 0\end{pmatrix}(b)-(x,y)\in\ker \kappa=\im\begin{pmatrix}q \\ g_1\end{pmatrix}$. Thus, $\exists z\in Q_1$ s.t.\ $\begin{pmatrix}q \\ g_1\end{pmatrix}(z)=\begin{pmatrix}f_0 \\ 0\end{pmatrix}(b)-(x,y)$ and $(x,y)=\begin{pmatrix}f_0 \\ 0\end{pmatrix}(b)-\begin{pmatrix}q \\ g_1\end{pmatrix}(z)=\begin{pmatrix}f_0 & \minus q \\ 0 & \minus g_1 \end{pmatrix}(b,z)$. Therefore, $(x,y)\in\im\begin{pmatrix}f_0 & \minus q \\ 0 & \minus g_1 \end{pmatrix}$ and $\ker \pi\circ\kappa\subseteq \im\begin{pmatrix}f_0 & \minus q \\ 0 & \minus g_1 \end{pmatrix}$.

Conversely, let $(x,y)\in\im\begin{pmatrix}f_0 & \minus q \\ 0 & \minus g_1 \end{pmatrix}$ i.e.\ $(x,y)=\begin{pmatrix}f_0 & \minus q \\ 0 & \minus g_1 \end{pmatrix}(a,b)$ for some $(a,b)\in P_0\times Q_1$. We get $\kappa(x,y)=\kappa(f_0(a),0)-\kappa(q(b),g_1(b))=\mu\circ f_0(a)-\mu\circ q(b)=\phi\circ \lambda(a)\in\im\phi$. Hence, $\pi\circ \kappa(x,y)=0$, $(x,y)\in\ker\pi\circ \kappa$ and $\ker \pi\circ\kappa=\im\begin{pmatrix}f_0 & \minus q \\ 0 & \minus g_1 \end{pmatrix}$.
\end{enumerate}
\end{proof}

\noindent
Theorem \ref{thm_cohomology_presentation} leads us to the following algorithm \textbf{PresHom} for computing the barcode of the persistence module or graded module $\ker\psi/\im\phi$: 

\vspace{0.1in}
\noindent \textbf{Input:} A complex of presentations as in \eqref{comp_presentations} given by the matrices $p,q,r,f_0,f_1,g_0,g_1$. 

\noindent
\textbf{Step 1:} Build the matrices $\begin{pmatrix}g_0 & \minus r\end{pmatrix}$ and $\begin{pmatrix}f_0 & \minus q \\ 0 & \minus g_1\end{pmatrix}$ and order the columns by degree. 

\noindent
\textbf{Step 2:} Column reduce $\begin{pmatrix}g_0 & \minus r\end{pmatrix}$ from left to right. The generators corresponding to the zero columns of the reduced matrix span $\ker \begin{pmatrix}g_0 & \minus r\end{pmatrix}$.

\noindent
\textbf{Step 3:} Remove the rows of $\begin{pmatrix}f_0 & \minus q \\ 0 & \minus g_1\end{pmatrix}$ corresponding to non-zero columns of the reduced matrix $\begin{pmatrix}g_0 & \minus r\end{pmatrix}$. 

\noindent
\textbf{Step 4:} Column reduce the resulting matrix from left to right.

\noindent
\textbf{Output:} The barcode can be read off from the final matrix $\begin{pmatrix}f_0 & \minus q \\ 0 & \minus g_1\end{pmatrix}$ in the following way: If the row $r_j$ is a pivot row with pivot in column $c_{j_l}$, output the bar $[\text{deg}(r_j),\text{deg}(c_{j_l})\big)$ otherwise output the bar $[\text{deg}(r_j),\infty\big)$.

\begin{theorem}
The algorithm \textbf{PresHom} computes the barcode of $\ker\psi/\im\phi$. If the matrices $p,q,r,f_0,f_1,g_0,g_1$ are of size $O(n)\times O(n)$ the algorithm takes $O(n^3)$ time. 
\label{thm:perscohom}
\end{theorem}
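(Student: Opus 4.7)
The plan is to prove correctness by recognizing that the algorithm implements, in matrix form, the construction of Theorem~\ref{thm_cohomology_presentation}, and then bound the running time by the cost of two left-to-right column reductions of $O(n)\times O(n)$ matrices.

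For correctness, Theorem~\ref{thm_cohomology_presentation} reduces the task to three subtasks: (i) compute a free presentation of the submodule $\ker(g_0\minus r)\subseteq Q_0\times R_1$; (ii) express the map $\bigl(\begin{smallmatrix}f_0 & \minus q \\ 0 & \minus g_1\end{smallmatrix}\bigr)$ as a map into that kernel; and (iii) convert the resulting presentation into canonical form so the barcode can be read off. For (i), I would argue that left-to-right column reduction of $(g_0\;\;\minus r)$, performed with the standard grading-respecting pivot rule (a column may only be reduced by columns to its left whose leading term has the correct degree), is the graded analogue of Gaussian elimination over $\field[t]$. The column operations record a change of basis of the free module $Q_0\times R_1$, and the zero columns of the reduced matrix correspond precisely to a set of new basis vectors that span $\ker(g_0\minus r)$; freeness of this kernel is automatic since submodules of free graded modules over $\field[t]$ are free.

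For (ii), the same column operations used on $(g_0\;\;\minus r)$ must be applied as row operations to $\bigl(\begin{smallmatrix}f_0 & \minus q \\ 0 & \minus g_1\end{smallmatrix}\bigr)$, because the rows of the second matrix index the same basis of $Q_0\times R_1$ that the columns of the first matrix index. After this basis change, Proposition~\ref{kernel_lemma} (or equivalently the fact that the image of $\bigl(\begin{smallmatrix}f_0 & \minus q \\ 0 & \minus g_1\end{smallmatrix}\bigr)$ lies in $\ker(g_0\minus r)$) forces all rows indexed by non-kernel basis vectors to vanish, so that Step~3's row-removal is lossless and yields a correct matrix representative of $\bigl(\begin{smallmatrix}f_0 & \minus q \\ 0 & \minus g_1\end{smallmatrix}\bigr)\colon P_0\times Q_1\to\ker(g_0\minus r)$. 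For (iii), column-reducing this restricted matrix from left to right with the persistence pivot rule puts the presentation in canonical form \eqref{pform}: each pivot $(r_j,c_{j_l})$ corresponds to an elementary summand $\mathbb{I}_{[\text{deg}(r_j),\text{deg}(c_{j_l}))}$, while every pivotless row $r_j$ corresponds to an $\mathbb{I}_{[\text{deg}(r_j),\infty)}$ summand, which is exactly what the output rule states.

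For the complexity bound, the two matrices $(g_0\;\;\minus r)$ and $\bigl(\begin{smallmatrix}f_0 & \minus q \\ 0 & \minus g_1\end{smallmatrix}\bigr)$ each have dimensions $O(n)\times O(n)$. Each left-to-right column reduction performs $O(n^2)$ column additions, each touching $O(n)$ entries, for a total of $O(n^3)$ field operations; the simultaneous row operations on the second matrix during Step~2 also cost $O(n^3)$, and the row deletion in Step~3 is $O(n^2)$. Degree bookkeeping on row and column labels is constant per operation thanks to the coefficient convention of Example~\ref{exmp:kcoef_pres}, so the total running time is $O(n^3)$.

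The step I expect to be the main obstacle is (ii): spelling out carefully that the row operations induced on $\bigl(\begin{smallmatrix}f_0 & \minus q \\ 0 & \minus g_1\end{smallmatrix}\bigr)$ by the column reduction of $(g_0\;\;\minus r)$ respect the grading and genuinely realize the basis change that sends the image into the coordinates labeled by the zero columns. Once this compatibility is made precise, correctness of the barcode extraction reduces to the already well-understood canonicalization of presentations of free graded modules.
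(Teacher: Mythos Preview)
Your proposal is correct and follows the same overall line as the paper's proof: correctness is deduced from Theorem~\ref{thm_cohomology_presentation}, and the running time is bounded by the cost of two column reductions of $O(n)\times O(n)$ matrices.

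There are two minor points where the paper is a little sharper. First, concerning your step (ii), the paper observes that the row operations on $\bigl(\begin{smallmatrix}f_0 & \minus q \\ 0 & \minus g_1\end{smallmatrix}\bigr)$ are in fact unnecessary: during left-to-right column reduction of $(g_0\;\minus r)$ one only ever adds a \emph{nonzero} (pivot) column $j$ to a later column $k$, and the corresponding row operation on the second matrix modifies only row $j$, which is a non-kernel row. Hence the rows indexed by zero columns are untouched, and Step~3 may simply delete the non-kernel rows of the \emph{original} matrix without first performing any row operations. Your argument (do the row operations; then the non-kernel rows vanish because the image lies in $\ker(g_0\;\minus r)$) is also valid and yields the same complexity, but the paper's shortcut is what makes the stated algorithm literally correct as written, since its Step~2 does not mention any row operations. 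Second, after Step~4 the matrix is not actually in the canonical form \eqref{pform}; one would still need a row reduction for that. The paper notes that this row reduction does not change the pivots, and the barcode depends only on the pivots, so it can be skipped; you should say this rather than asserting the matrix reaches canonical form.
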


\begin{proof}
From the input we can construct the matrices $\begin{pmatrix}g_0 & \minus r\end{pmatrix}$ and $\begin{pmatrix}f_0 & \minus q \\ 0 & \minus g_1 \end{pmatrix}$ and order the columns by degree of the generators. After a column reduction from left to right, the zero columns of the reduced matrix span $\ker\begin{pmatrix}g_0 & \minus r\end{pmatrix}$. Removing the rows of  $\begin{pmatrix}f_0 & \minus q \\ 0 & \minus g_1 \end{pmatrix}$ corresponding to the non-zero columns of the reduced matrix yields 
\begin{equation} \label{appendix_presentation_matrix_cohomology}
\ker \begin{pmatrix}g_0 & \minus r\end{pmatrix} \xleftarrow{\begin{pmatrix}f_0 & \minus q \\ 0 & \minus g_1 \end{pmatrix}} P_0\times Q_1
\end{equation}
Note that we do not need any transformations on $\begin{pmatrix}f_0 & \minus q \\ 0 & \minus g_1 \end{pmatrix}$. When we perform the change of basis, we never add a zero column of $\ker\begin{pmatrix}g_0 & \minus r\end{pmatrix}$ to another column. Therefore, in the other matrix there is never a row added to a row corresponding to a zero column. Hence the only rows we care about stay unchanged.

Theorem~\ref{thm_cohomology_presentation} shows that \eqref{appendix_presentation_matrix_cohomology} is a presentation of the homology $\ker\psi/\im\phi$. After column reducing the presentation matrix in \eqref{appendix_presentation_matrix_cohomology} from left to right, every column is either zero or has a unique pivot. We could now row reduce the presentation matrix and bring it to canonical form. After this step we could read off the elementary summands defining the barcode. Every row $r_j$ corresponds to a bar that is born at $\text{deg}(r_j)$. If it is a pivot row with pivot in column $c_{j_l}$ the bar dies at index $\text{deg}(c_{j_l})$. If it is a zero row the bar lives forever. Since the row reduction does not change the pivots and the bars only depend on the pivots, we can avoid the row reduction and read of the bars directly. Hence, after the column reduction every pivot row contributes a bar $[\text{deg}(r_j),\text{deg}(c_{j_l})\big)$ and every non-pivot row contributes a bar $[\text{deg}(r_j),\infty\big)$. All required matrix reductions can be done in time $O(n^3)$.   
\end{proof}

\noindent
If we have a complex of free graded modules, then $P_1$,$Q_1$,$R_1$ and the morphisms $p,q,r$ in \eqref{comp_presentations} can be chosen as zero. In this case, the algorithm \textbf{PresHom} described above reduces to the persistence algorithm as described in \cite{ZC05}. 

\begin{example} \label{homology_algo_example}
Consider the following instance of the complex of presentations in \eqref{comp_presentations} 
\begin{equation*}
\begin{tikzcd}[ampersand replacement=\&,every label/.append style = {font = \tiny},column sep=large, row sep=large]
0 \& L \arrow[l] \arrow[d,swap,"\phi"] \&[10pt] P_0 \arrow[l] \arrow{d}[swap,xshift=-2pt,yshift=-3pt]{\begin{blockarray}{c@{\hspace{7pt}}ccc}
& 2 & 1 & 1 \\
\begin{block}{c@{\hspace{7pt}}(ccc)}
0 & 1 & 1 & 0 \\
0 & 0 & 1 & 1 \\
1 & 1 & 0 & 1 \\
\end{block}
\end{blockarray}} \&[20pt] P_1 \arrow{l}[swap,xshift=-2pt,yshift=-7pt]{\begin{blockarray}{c@{\hspace{7pt}}ccc}
& 6 & 5 & 7 \\
\begin{block}{c@{\hspace{7pt}}(ccc)}
2 & 1 & 0 & 0 \\
1 & 0 & 1 & 0 \\
1 & 0 & 0 & 1 \\
\end{block}
\end{blockarray}} \arrow{d}[xshift=-2pt,yshift=-3pt]{\begin{blockarray}{c@{\hspace{7pt}}ccc}
& 6 & 5 & 7 \\
\begin{block}{c@{\hspace{7pt}}(ccc)}
5 & 1 & 1 & 0 \\
5 & 0 & 1 & 1 \\
6 & 1 & 0 & 1 \\
\end{block}
\end{blockarray}} \&[10pt] 0 \arrow[l] \\[7pt]
0 \& M \arrow[l] \arrow[d,swap,,"\psi"] \& Q_0 \arrow[l] \arrow{d}[swap,xshift=-2pt,yshift=-3pt]{\begin{blockarray}{c@{\hspace{5pt}}ccc}
& 0 & 0 & 1 \\
\begin{block}{c@{\hspace{5pt}}(ccc)}
0 & 1 & 1 & 1 \\
\end{block}
\end{blockarray}} \& Q_1 \arrow{l}[swap,xshift=-2pt,yshift=-7pt]{\begin{blockarray}{c@{\hspace{7pt}}ccc}
& 5 & 5 & 6 \\
\begin{block}{c@{\hspace{7pt}}(ccc)}
0 & 1 & 0 & 0 \\
0 & 0 & 1 & 0 \\
1 & 0 & 0 & 1 \\
\end{block}
\end{blockarray}} \arrow{d}[xshift=-2pt,yshift=-3pt]{\begin{blockarray}{c@{\hspace{5pt}}ccc}
& 5 & 5 & 6 \\
\begin{block}{c@{\hspace{5pt}}(ccc)}
3 & 1 & 1 & 1 \\
\end{block}
\end{blockarray}} \& 0 \arrow[l] \\[3pt]
0 \& N \arrow[l] \& R_0 \arrow[l] \& R_1 \arrow{l}[swap,xshift=-2pt,yshift=-7pt]
{\begin{blockarray}{c@{\hspace{4pt}}c}
& 3 \\
\begin{block}{c@{\hspace{4pt}}(c)}
0 & 1  \\
\end{block}
\end{blockarray}}\& 0 \arrow[l]
\end{tikzcd}
\end{equation*}
where the matrices are labeled by column and row degrees and have coefficients in $\mathbb{Z}_2$. We build the matrices
\begin{equation*} \footnotesize
\begin{pmatrix}g_0 & \minus r\end{pmatrix}=\begin{blockarray}{ccccc}
& 0 & 0 & 1 & 3 \\
\begin{block}{c(cccc)}
0 & 1 & 1 & 1 & 1 \\
\end{block}
\end{blockarray}
\hspace{2pt}, \hspace{5pt}
\begin{pmatrix}f_0 & \minus q \\ 0 & \minus g_1\end{pmatrix}=\begin{blockarray}{ccccccc}
& 2 & 1 & 1 & 5 & 5 & 6 \\
\begin{block}{c(cccccc)}
0 & 1 & 1 & 0 & 1 & 0 & 0 \\
0 & 0 & 1 & 1 & 0 & 1 & 0 \\
1 & 1 & 0 & 1 & 0 & 0 & 1 \\
3 & 0 & 0 & 0 & 1 & 1 & 1 \\
\end{block}
\end{blockarray}
\end{equation*}
column reduce $(g_0 \hspace{2pt} \minus r)$, delete the rows corresponding to non-zero columns from $\begin{pmatrix}f_0 & \minus q \\ 0 & \minus g_1\end{pmatrix}$ and order its columns by degree
\begin{equation*} \footnotesize
\begin{pmatrix}g_0 & \minus r\end{pmatrix}=\begin{blockarray}{ccccc}
& 0 & 0 & 1 & 3 \\
\begin{block}{c(cccc)}
0 & 1 & 0 & 0 & 0 \\
\end{block}
\end{blockarray}
\hspace{2pt}, \hspace{5pt}
\begin{pmatrix}f_0 & \minus q \\ 0 & \minus g_1\end{pmatrix}=\begin{blockarray}{ccccccc}
& 1 & 1 & 2 & 5 & 5 & 6 \\
\begin{block}{c(cccccc)}
0 & 1 & 1 & 0 & 0 & 1 & 0 \\
1 & 0 & 1 & 1 & 0 & 0 & 1 \\
3 & 0 & 0 & 0 & 1 & 1 & 1 \\
\end{block}
\end{blockarray}
\end{equation*}
Now we column reduce $\begin{pmatrix}f_0 & \minus q \\ 0 & \minus g_1\end{pmatrix}$ and obtain the reduced matrix 
\begin{equation} \label{example_shv_computation} \footnotesize
\begin{pmatrix}f_0 & \minus q \\ 0 & \minus g_1\end{pmatrix}=\begin{blockarray}{ccccccc}
& 1 & 1 & 2 & 5 & 5 & 6 \\
\begin{block}{c(cccccc)}
0 & 1 & 1 & 0 & 0 & 0 & 0 \\
1 & 0 & 1 & 0 & 0 & 0 & 0 \\
3 & 0 & 0 & 0 & 1 & 0 & 0 \\
\end{block}
\end{blockarray}
\end{equation}
Finally we read off the barcode of $\ker\psi /\im\phi$ from \eqref{example_shv_computation} in the following way: Row $1$ contributes the bar $[0,1)$, row $2$ contributes the empty bar $[1,1)$, and row $3$ contributes the bar $[3,5)$.   
\end{example}


\section{Computing presentations of morphisms of persistence modules} \label{sec_presentation_algorithm}

The algorithm described in the previous section requires as input a complex of presentations of graded modules. In practice, when working with, for example, simplicial towers or persistent sheaves (see Appendix \ref{app_sheaves}) we cannot always assume that the input is given as a complex of presentations. Thus, to make use of our algorithm in various settings, we develop an efficient algorithm \textbf{PresPersMod} to compute a presentation of a morphism of persistence modules. Given a morphism of persistence modules $\phi\colon M\rightarrow N$ of finite type
\begin{equation} \label{morphism_persistence_modules}
\begin{tikzcd}
M_0 \arrow[r,"A_0"] \arrow[d,"C_0"] & M_1 \arrow[r,"A_1"] \arrow[d,"C_1"] & \cdots \arrow[r,"A_{m\minus 1}"] &[5pt] M_m \arrow[d,"C_m"] \arrow[r,"\cong"] & \cdots \\
N_0 \arrow[r,"B_0"] & N_1 \arrow[r,"B_1"] & \cdots \arrow[r,"B_{m\minus 1}"] & N_m \arrow[r,"\cong"] & \cdots
\end{tikzcd}
\end{equation}
where we use the notation $M_i\coloneqq M(i)$, $N_i\coloneqq N(i)$ and the morphisms $M(i\leq i+1)$, $N(i\leq i+1)$ and $\phi(i)$ are input by the respective matrices $A_{i}$, $B_{i}$ and $C_i$. Our goal is to compute a canonical presentation 
\begin{equation}  \label{morphism_presentations}
\begin{tikzcd}
P_0 \arrow[d,swap,"f_0"] & \arrow[l,swap,"p"] P_1 \arrow[d,"f_1"] \\
Q_0 & \arrow[l,swap,"q"] Q_1
\end{tikzcd}
\end{equation}
presenting a morphism of persistence modules isomorphic to \eqref{morphism_persistence_modules}. As explained in Section \ref{sec_background}, it is enough to compute the matrix $f_0$ where each row and column additionally stores the degree of the corresponding generator $b(-)$ and its relation $d(-)$. Each generator-relation pair in a canonical presentation corresponds to an interval summand in a persistence module isomorphic to the top or bottom row of \eqref{morphism_persistence_modules} . Hence, an alternative point of view is that the algorithm computes the barcodes of \eqref{morphism_persistence_modules} while keeping track of how the bars map to each other (cf.\ \eqref{hom_pmod} and \eqref{hom_sum}). The algorithm \textbf{PresPersMod} maintains a canonical presentation 
\begin{equation*} \label{ith_presentation}
\begin{tikzcd}
P_0^i \arrow[d,swap,"f_0^i"] & \arrow[l,swap,"p^i"] P_1^i \arrow[d,"f_1^i"] \\
Q_0^i & \arrow[l,swap,"q^i"] Q_1^i
\end{tikzcd}
\end{equation*}
of a morphism of persistence modules obtained by restricting the original modules up to index $i$
\begin{equation} \label{ith_restriction}
\begin{tikzcd}
M_0 \arrow[r,"A_0"] \arrow[d,"C_0"] & \cdots \arrow[r,"A_{i\minus 1}"] & M_i \arrow[d,"C_i"] \arrow[r,"\text{id}"] & M_i \arrow[r,"\text{id}"] \arrow[d,"C_i"] & \cdots  \\
N_0 \arrow[r,"B_0"]  & \cdots \arrow[r,"B_{i\minus 1}"] & N_i \arrow[r,"\text{id}"] & N_i \arrow[r,"\text{id}"] & \cdots
\end{tikzcd}
\end{equation}
while processing \eqref{morphism_persistence_modules} from left $i=0$ to right $i=m$. Iteratively, we build the matrix $f^i_0$ with the birth- and death-time annotations. In a generic step, when we arrive at $i$ from $i-1$, the matrices
$A_i$, $B_i$, and $C_i$ are already transformed to $A_i^t$, $B_i^t$, and
$C_i^t$ respectively. We reduce these matrices further to $A_i^r$, $B_i^r$, and $C_i^r$ which induce transformations of $A_{i+1}$, $B_{i+1}$, and $C_{i+1}$
as we go forward in \textbf{PresPersMod} described below.

\vspace{0.1in}
\textbf{Initialization at $i=0$:} We start with the matrices $A_0$, $B_0$ and $C_0$. Each column and row of $C_0$ corresponds to a basis element of $M_0$ and $N_0$. Since we are at index $0$, each of these basis elements has to start a new bar and they are mapped according to the matrix $C_0$. Therefore, we set $f^0_0:=C_0$ with $b(c_j)=0$, $d(c_j)=\infty$ and $b(r_k)=0$, $d(r_k)=\infty$ for every column $c_j$ and every row $r_k$ of $C_0$. Each bar at index $0$ now corresponds to a generator in $P^0_0$ or $Q^0_0$ and their maps are determined by the matrix of generators $f^0_0$.
 Initially, we set $A_0^t:=A_0$, $B_0^t:=B_0$ and $C_0^t:=C_0$ respectively.

\vspace{0.1in}
\textbf{Moving from $i$ to $i+1$:} Given the matrix $f^i_0$ and the matrices $A^t_i$ and $B^t_i$ from the previous step. Each column $c_j$ and row $r_k$ of $f^i_0$ corresponds to a bar in persistence modules isomorphic to \eqref{ith_restriction}. The bars with $d(c_j),d(r_k)<\infty$ already finished (died) and need no further processing. For the bars with $d(c_j),d(r_k)=\infty$ we need to determine which of them die entering index $i+1$. Moreover, we need to determine how many new bars are born at index $i+1$ and how they are mapped. 

Each column of $A^t_i$ and $B^t_i$ corresponds to a column and row of $f^i_0$ respectively and they are ordered w.r.t.\ their order in $f^i_0$. Column reduce $A^t_i$ and $B^t_i$ from left to right to have reduced matrices $A_i^r$ and $B_i^r$ respectively while performing the corresponding column and row operations on $f^i_0$ to obtain the new matrix $\overline{f}^i_0$.  If $a_j$ is a zero column in $A^r_i$ and $\overline{c}_{i_j}$ is the corresponding column of $\overline{f}^i_0$, we set $d(\overline{c}_{i_j})=i+1$. We proceed in the same way for the columns of $B^r_i$ and the rows of $\overline{f}^i_0$. 

Every row of $A^r_i$ or $B^r_i$ that is not a pivot row could be zeroed out by row reductions. Hence, after a change of basis, every non-pivot row of $A^r_i$ or $B^r_i$ is a generator of the respective cokernel  and thus generates a new bar born at index $i+1$. We do not perform the row reduction on $A^r_i$ or $B^r_i$ but we transform $A_{i+1}$, $B_{i+1}$ and $C_{i+1}$ according to this change of bases to maintain matrices that are consistent with the bases induced by our presentation. These transformed matrices constitute $A^t_{i+1}$, $B^t_{i+1}$ and $C^t_{i+1}$ 
for the next iteration.
Observe that every column of $A^t_{i+1}$ and $B^t_{i+1}$ and every column and row of $C^t_{i+1}$ corresponds either to a non-pivot row of $A^r_i$ or $B^r_i$ and thus to a new born bar at index $i+1$ or to a pivot row with pivot in the $j$-th column and thus to the bar of the $j$-th column in $A^r_i$ or $B^r_i$.  We sort the columns of $A^t_{i+1}$ and $B^t_{i+1}$ and the columns and rows of $C^t_{i+1}$ w.r.t.\ their order in $\overline{f}^i_0$ where columns and rows for new generators are added at the end in arbitrary order. The matrix $C^t_{i+1}$, restricted to the columns corresponding to non-pivot rows of $A^r_i$ (new generators), is added to $\overline{f}^i_0$ to obtain $f^{i+1}_0:=\overline{f}^i_0$. For each new column $g_{j}$ and row $h_{k}$ we set $b(g_{j})=i+1$, $d(g_{j})=\infty$ and $b(h_{k})=i+1$, $d(h_{k})=\infty$. 

\begin{theorem}
Given a morphism as in~\eqref{morphism_persistence_modules}, the algorithm \textbf{PresPersMod} computes a canonical presentation~\eqref{morphism_presentations} in $O(n^3)$ time where $\text{dim}(M_i),\text{dim}(N_i)=O(n_i)$ and $n=\sum_{i=0}^m n_i$.
\label{thm:bar-from-matrices}
\end{theorem}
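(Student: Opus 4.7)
The plan is to prove correctness by induction on $i$ and then separately bound the running time. For correctness I would maintain the invariant that after the $i$-th iteration the annotated matrix $f_0^i$ encodes a canonical presentation of the morphism of persistence modules obtained from~\eqref{morphism_persistence_modules} by restricting to indices $0,\dots,i$ and extending by identities, and that the stored matrices $A_i^t$, $B_i^t$, $C_i^t$ express $M(i\le i+1)$, $N(i\le i+1)$, $\phi(i)$ in the bases determined by the currently alive generators of $P_0^i$ and $Q_0^i$. The base case $i=0$ is immediate: every basis vector of $M_0$ and $N_0$ gives rise to a fresh elementary summand $\B_{[0,\infty)}$, and $C_0$ stored as $f_0^0$ realises $\phi(0)$ between the resulting free modules.

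For the inductive step I would interpret the column reduction of $A_i^t$ structurally. Columns of $A_i^r$ that become zero identify basis elements of $P_0^i$ whose images in $M_{i+1}$ vanish, so the corresponding bars must die at index $i+1$; setting $d(\overline{c}_{i_j})=i+1$ replaces the $\B_{[b,\infty)}$ summand by $\B_{[b,i+1)}$, matching the elementary presentation~\eqref{elemenatry_presentation}. The non-pivot rows of $A_i^r$ span a complement of $\im M(i\le i+1)$, so after the implicit row reduction they become fresh generators of $M_{i+1}$ born at index $i+1$, contributing new $\B_{[i+1,\infty)}$ summands; the analogous argument handles $B_i^t$. Naturality of $\phi$ together with~\eqref{hom_pmod} and~\eqref{hom_sum} would then force the map between newly created summands to be the submatrix of $C_{i+1}^t$ indexed by the non-pivot rows of $A_i^r$ and $B_i^r$, which is exactly the block appended to $\overline{f}_0^i$ to form $f_0^{i+1}$.

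The main obstacle I anticipate is justifying that skipping the explicit row reduction on $A_i^r$ and $B_i^r$ does not compromise the invariant. The key observation I would use is that a row operation on $A_i^r$ is equivalent to a change of basis in $M_{i+1}$, which must be propagated to $A_{i+1}$, $B_{i+1}$, $C_{i+1}$ but not back to $A_i^r$ itself; applying this induced change of basis (and its analogue for $B_i^r$) to the outgoing matrices produces the $A_{i+1}^t$, $B_{i+1}^t$, $C_{i+1}^t$ satisfying the invariant at step $i+1$. Sorting new columns and rows to the end preserves the block-triangular form corresponding to a direct sum of elementary summands, so $f_0^{i+1}$ remains a canonical presentation.

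For the time bound I would observe that at iteration $i$ the column reductions of $A_i^t$ and $B_i^t$, together with the induced basis changes on $A_{i+1}$, $B_{i+1}$, $C_{i+1}$ and the accumulating matrix $f_0^i$, cost $O\bigl(n_i^2(n_i+n_{i+1}) + n_i \cdot n \bigr)$. Summed over $i$, the algorithm performs column-reduction-style work on matrices whose cumulative dimension is $O(n)\times O(n)$ with $n=\sum_i n_i$, and the worst-case cost is therefore bounded by Gaussian reduction of an $O(n)\times O(n)$ matrix, i.e.\ $O(n^3)$.
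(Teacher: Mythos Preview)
Your proposal is correct and follows essentially the same inductive approach as the paper: maintain a canonical presentation of the restricted morphism together with transformed matrices $A_i^t$, $B_i^t$, $C_i^t$ expressed in the bases of the currently alive generators, interpret zero columns of $A_i^r$, $B_i^r$ as deaths and non-pivot rows as births, and propagate the implicit row-reduction basis change forward to $A_{i+1}$, $B_{i+1}$, $C_{i+1}$. One minor correction on the running time: the per-step cost of the operations on the accumulating matrix $f_0^i$ is $O(n_i^2\cdot n)$ rather than $O(n_i\cdot n)$, since column-reducing $A_i^t$ (with $O(n_i)$ columns) may trigger up to $O(n_i^2)$ column additions on $f_0^i$, each of length $O(n)$; your global Gaussian-elimination heuristic still correctly yields $O(n^3)$, and this is precisely the summation $\sum_i O(n_i^2\cdot n)+O(n_{i+1}^2\cdot n)=O(n^3)$ that the paper carries out explicitly.
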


\begin{proof}
See Appendix \ref{app_presentation_algo}.
\end{proof}

\begin{example}
Consider the following morphism of persistence modules over $\mathbb{Z}_2$
\begin{equation} \label{exmp:prespersmod}
\begin{tikzcd}[every label/.append style = {font = \tiny}]
M_0 \arrow[r,"\begin{pmatrix}0 \hspace{1pt} 1 \hspace{1pt} 0 \\ 1 \hspace{1pt} 1 \hspace{1pt} 1 \\ 1 \hspace{1pt} 1 \hspace{1pt} 1 \end{pmatrix}"] \arrow[d,"\begin{pmatrix}0 \hspace{1pt} 1 \hspace{1pt} 1 \\ 0 \hspace{1pt} 1 \hspace{1pt} 1 \\ 1 \hspace{1pt} 0 \hspace{1pt} 0 \end{pmatrix}"] & M_1 \arrow[r,"\begin{pmatrix}0 \hspace{1pt} 1 \hspace{1pt} 1 \\ 0 \hspace{1pt} 0 \hspace{1pt} 0 \\ 0 \hspace{1pt} 1 \hspace{1pt} 1 \end{pmatrix}"] \arrow[d,"\begin{pmatrix}0 \hspace{1pt} 0 \hspace{1pt} 0 \\ 0 \hspace{1pt} 1 \hspace{1pt} 1 \\ 0 \hspace{1pt} 0 \hspace{1pt} 0 \end{pmatrix}"] & M_2 \arrow[d,"\begin{pmatrix}1 \hspace{1pt} 0 \hspace{1pt} 0 \\ 1 \hspace{1pt} 0 \hspace{1pt} 0 \\ 0 \hspace{1pt} 1 \hspace{1pt} 0 \end{pmatrix}"] \arrow[r,"\cong"] & \cdots \\
N_0 \arrow[r,swap,"\begin{pmatrix}1 \hspace{1pt} 1 \hspace{1pt} 0 \\ 1 \hspace{1pt} 1 \hspace{1pt} 0 \\ 0 \hspace{1pt} 0 \hspace{1pt} 0 \end{pmatrix}"] & N_1 \arrow[r,swap,"\begin{pmatrix}1 \hspace{1pt} 1 \hspace{1pt} 1 \\ 0 \hspace{1pt} 1 \hspace{1pt} 1 \\ 0 \hspace{1pt} 0 \hspace{1pt} 1 \end{pmatrix}"] & N_2 \arrow[r,swap,"\cong"] & \cdots
\end{tikzcd}
\end{equation}
We initialize $f^0_0=C_0$ and set the birth- and death-time of every generator to $0$ and $\infty$: 
\begin{equation*} \scriptsize
f^0_0=\begin{blockarray}{cccc}
& c_1 & c_2 & c_3 \\
\begin{block}{c(ccc)}
r_1 & 0 & 1 & 1 \\
r_2 & 0 & 1 & 1 \\
r_3 & 1 & 0 & 0 \\
\end{block}
\end{blockarray}
\hspace{1pt}, \hspace{5pt}
\begin{blockarray}{ccccccc}
& c_1 & c_2 & c_3 & r_1 & r_2 & r_3  \\
\begin{block}{c(cccccc)}
b & 0 & 0 & 0 & 0 & 0 & 0 \\
d & \infty & \infty & \infty & \infty & \infty & \infty \\
\end{block}
\end{blockarray}
\end{equation*}
We column reduce $A_0$ and $B_0$ while performing the corresponding operations on $f^0_0$: To reduce $A_0$ we add column $c_1$ to column $c_2$ and $c_3$ and obtain the reduced matrix $A^r_0$. We also have to add column $c_1$ to column $c_2$ and $c_3$ in $f_0^0$. To reduce $B_0$ we add column $r_1$ to column $r_2$ and obtain the reduced matrix $B^r_0$. Now we have to add row $r_2$ to $r_1$ in $f_0^0$ to obtain $\overline{f}_0^0$.  
\begin{equation*} \scriptsize
A^r_0=\begin{blockarray}{ccc}
c_1 & c_2 & c_3 \\
\begin{block}{(ccc)}
0 & 1 & 0 \\
1 & 0 & 0 \\
1 & 0 & 0 \\
\end{block}
\end{blockarray}
\hspace{1pt}, \hspace{5pt}
B^r_0=\begin{blockarray}{ccc}
r_1 & r_2 & r_3 \\
\begin{block}{(ccc)}
1 & 0 & 0 \\
1 & 0 & 0 \\
0 & 0 & 0 \\
\end{block}
\end{blockarray}
\hspace{1pt}, \hspace{5pt}
\overline{f}^0_0=\begin{blockarray}{cccc}
& c_1 & c_2 & c_3 \\
\begin{block}{c(ccc)}
r_1 & 0 & 0 & 0 \\
r_2 & 0 & 1 & 1 \\
r_3 & 1 & 1 & 1 \\
\end{block}
\end{blockarray}
\end{equation*}
We perform the basis transformation corresponding to a row reduction of $A^r_0$ and $B^r_0$ on $A_1$, $B_1$ and $C_1$: To row reduce $A^r_0$ we would have to add the third row to the second one. Hence, we add the second column to the third one in $A_1$ and $C_1$. To row reduce $B^r_0$ we would have to add the second row to the first one. Hence, we add the first column of $B_1$ to the second one and the second row of $C_1$ to the first one. Finally we reorder the columns of $A_1$ and $B_1$ and the columns and rows of $C_1$ such that their order matches the order of the generators of $\overline{f}^0_0$. After row reducing $A^r_0$ and $B^r_0$, the second row of $A^r_0$ and the first and third row of $B^r_0$ would be zero. Thus, these rows span the respective cokernel and the corresponding columns of $A_1$ and $B_1$ start new bars and get sorted in at the end. 
\begin{equation*} \scriptsize
A^t_1=\begin{blockarray}{ccc}
c_1 & c_2 & c_4 \\
\begin{block}{(ccc)}
0 & 0 & 1 \\
0 & 0 & 0 \\
0 & 0 & 1 \\
\end{block}
\end{blockarray}
\hspace{1pt}, \hspace{5pt}
B^t_1=\begin{blockarray}{ccc}
r_1 & r_4 & r_5 \\
\begin{block}{(ccc)}
0 & 1 & 1 \\
1 & 0 & 1 \\
0 & 0 & 1 \\
\end{block}
\end{blockarray}
\hspace{1pt}, \hspace{5pt}
C^t_1=\begin{blockarray}{cccc}
& c_1 & c_2 & c_4 \\
\begin{block}{c(ccc)}
r_1 & 0 & 0 & 1 \\
r_4 & 0 & 0 & 1 \\
r_5 & 0 & 0 & 0 \\
\end{block}
\end{blockarray}
\end{equation*}
We update $\overline{f}^0_0$ to $f^1_0$ as follows: The column $c_4$ and the rows $r_4$ and $r_5$ of $C^t_1$ correspond to non-pivot rows of $A^r_0$ or $B^r_0$. We add them as new columns and rows of $f^1_0$ born at index $1$. The columns $c_3$ of $A^r_0$ and $r_2,r_3$ of $B^r_0$ are zero-columns, thus we set $d(c_3),d(r_2),d(r_3)=1$:
\begin{equation*} \scriptsize
f^1_0=\begin{blockarray}{ccccc}
& c_1 & c_2 & c_3 & c_4 \\
\begin{block}{c(cccc)}
r_1 & 0 & 0 & 0 & 1 \\
r_2 & 0 & 1 & 1 & 0 \\
r_3 & 1 & 1 & 1 & 0 \\
r_4 & 0 & 0 & 0 & 1 \\
r_5 & 0 & 0 & 0 & 0 \\
\end{block}
\end{blockarray}
\hspace{1pt}, \hspace{5pt}
\begin{blockarray}{cccccccccc}
& c_1 & c_2 & c_3 & c4 & r_1 & r_2 & r_3 & r_4 & r_5 \\
\begin{block}{c(ccccccccc)}
b & 0 & 0 & 0 & 1 & 0 & 0 & 0 & 1 & 1 \\
d & \infty & \infty & 1 & \infty & \infty & 1 & 1 & \infty & \infty \\
\end{block}
\end{blockarray}
\end{equation*}
Since $A^t_1$ and $B^t_1$ are already column reduced we can directly transform $C_2$: To row reduce $A^t_1$ we would have to add the third row to the first one. Thus, we add the first column of $C_2$ to the third one. To row reduce $B^t_1$ we would have to add the third row to the first one and the second one. Hence, we add the third row of $C_2$ to the first and the second one. Finally we reorder the columns and rows of $C_2$ according to the order of the corresponding generators in $f_0^1$ where new generators corresponding to non-pivot rows of $A^t_1$ and $B^t_1$ are sorted in at the end.
\begin{equation*}\scriptsize
C^t_2=\begin{blockarray}{cccc}
& c_4 & c_5 & c_6 \\
\begin{block}{c(ccc)}
r_1 & 1 & 1 & 1 \\
r_4 & 1 & 1 & 1 \\
r_5 & 0 & 1 & 0 \\
\end{block}
\end{blockarray}
\end{equation*}
We update $\overline{f}^1_0:=f^1_0$ by adding columns $c_5$, $c_6$ of $C^t_2$, corresponding to non-pivot rows of $A^t_1$, with $b(c_5),b(c_6)=2$ and $d(c_5),d(c_6)=\infty$. Since $c_1$ and $c_2$ are zero-columns of $A^t_1$, we set $d(c_1),d(c_2)=2$ and obtain the final result: 
\begin{equation*} \scriptsize
f^2_0=\begin{blockarray}{ccccccc}
& c_1 & c_2 & c_3 & c_4 & c_5 & c_6 \\
\begin{block}{c(cccccc)}
r_1 & 0 & 0 & 0 & 1 & 1 & 1 \\
r_2 & 0 & 1 & 1 & 0 & 0 & 0 \\
r_3 & 1 & 1 & 1 & 0 & 0 & 0 \\
r_4 & 0 & 0 & 0 & 1 & 1 & 1 \\
r_5 & 0 & 0 & 0 & 0 & 1 & 0 \\
\end{block}
\end{blockarray}
\hspace{1pt}, \hspace{2pt}
\begin{blockarray}{cccccccccccc}
& c_1 & c_2 & c_3 & c4 & c_5 & c_6 & r_1 & r_2 & r_3 & r_4 & r_5 \\
\begin{block}{c(ccccccccccc)}
b & 0 & 0 & 0 & 1 & 2 & 2 &  0 & 0 & 0 & 1 & 1 \\
d & 2 & 2 & 1 & \infty & \infty & \infty & \infty & 1 & 1 & \infty & \infty \\
\end{block}
\end{blockarray}
\end{equation*}
Note that the obtained presentation is equal to the presentation of \eqref{eq:concrete_morphism} in Example \ref{exmp:kcoef_pres}. This implies that the morphisms of persistence modules \eqref{exmp:prespersmod} is isomorphic to \eqref{eq:concrete_morphism}.
\end{example}


\section{Complexes of presentations} \label{sec_comp_presentations}

The algorithm introduced in the previous section allows us to compute a canonical presentation of a complex of persistence modules. To obtain a valid input for our homology algorithm the resulting presentations need to form a complex, i.e., a sequence of presentations connected by morphisms where composition of consecutive morphisms is zero. The problem is that an arbitrary presentation of a complex of graded modules is not necessarily a complex of presentations. Consider the following complex of persistence modules over $\mathbb{Z}_2$
\begin{equation*}
\mathbb{I}_{[1,2)}\xlongrightarrow{\phi} \mathbb{I}_{[0,2)}\xlongrightarrow{\psi} \mathbb{I}_{[0,1)}
\end{equation*}
where $\phi$ and $\psi$ are the identity map on the overlap of the intervals and $0$ elsewhere. Note that the composition is zero because the first and the third interval do not overlap. The following canonical presentation of the corresponding complex of $\field[t]$-modules
\begin{equation} \label{non_complex}
\begin{tikzcd}
0 & \arrow[l] \mathbb{I}_{[1,2)} \arrow[d,"\phi"] & \arrow[l] \mathbb{I}_{[1,\infty)} \arrow[d,"t"] & \arrow[l,swap,"t"] \mathbb{I}_{[2,\infty)} \arrow[d,"1"] & \arrow[l] 0 \\
0 & \arrow[l] \mathbb{I}_{[0,2)} \arrow[d,"\psi"] & \arrow[l] \mathbb{I}_{[0,\infty)} \arrow[d,"1"] & \arrow[l,swap,"t^2"] \mathbb{I}_{[2,\infty)} \arrow[d,"t"] & \arrow[l] 0 \\
0 & \arrow[l] \mathbb{I}_{[0,1)} & \arrow[l] \mathbb{I}_{[0,\infty)} & \arrow[l,swap,"t"] \mathbb{I}_{[1,\infty)} & \arrow[l] 0 
\end{tikzcd}
\end{equation}
is not a complex of presentations because $1\circ t\not=0$ and $t\circ 1\neq 0$. The following propositions help us to show that we can always modify a canonical presentation of a complex of graded modules to a complex of presentations. 

\begin{proposition} \label{appendix_prop_uniqueness_morphism}
Given a commutative diagram with exact rows of the form 
\begin{equation*}
\begin{tikzcd}
0 & \mathbb{I}_{[a,b)} \arrow[l] \arrow[d,"\phi"] & \mathbb{I}_{[a,\infty)} \arrow[l,swap,"\cdot 1"] \arrow[d,"f_0"] & \mathbb{I}_{[b,\infty)} \arrow[l,swap,"\cdot t^{b\minus a}"] \arrow[d,"f_1"] & 0 \arrow[l] \\
0 & \mathbb{I}_{[c,d)} \arrow[l] & \mathbb{I}_{[c,\infty)} \arrow[l,swap,"\cdot 1"] & \mathbb{I}_{[d,\infty)} \arrow[l,swap,"\cdot t^{d\minus c}"] & 0 \arrow[l] 
\end{tikzcd}  
\end{equation*}
such that $a<b$ and $c<d$. There are three cases:
\begin{enumerate}
    \item If $c\leq a<d\leq b$, then $(\phi=0 \iff f_0=0 \iff f_1=0)$.
    \item If $c<d\leq a<b$, then $\phi=0$ and $(f_0=0 \iff f_1=0)$.
    \item Else, $\phi=f_0=f_1=0$.
\end{enumerate}
\end{proposition}

\begin{proof}
As defined in \eqref{hom_pmod}, if $c\leq a<d\leq b$, there is a unique non-zero morphism, up to multiplication with a scalar, between two interval modules. We denote this morphism w.r.t.\ the unit $1$ of the field $\field$ by $(\cdot 1)$ and $(\cdot t^r)$. 
\begin{enumerate}
\item Since $a<b$, $c\leq a$, $d\leq b$ and $c<d$, we have $(\cdot 1)\circ f_0=0$ and $f_0\circ (\cdot t^{b\minus a})=0$ if and only if $f_0=0$ and $(\cdot t^{d\minus c})\circ f_1=0$ if and only if $f_1=0$. Moreover, since $a<d$, we have $\phi\circ (\cdot 1)=0$ if and only if $\phi=0$. 
\item Since $d\leq a$, we get $\phi=0$. Similiar to (1), we obtain $f_0=0 \iff f_1=0$.
\item If we are not in case (1), then $\phi=0$. If $a<c$, then $f_0=0$. If $f_1\neq 0$, then $d\leq b$ and $(\cdot t^{d\minus c})\circ f_1\neq 0=f_0\circ (\cdot t^{b\minus a})$. A contradiction. If $b<d$, then $f_1=0$. If $f_0\neq 0$, then $c\leq a$ and $f_0\circ (\cdot t^{b\minus a})\neq 0=(\cdot t^{d\minus c})\circ f_1$. A contradiction. If $c\leq a$ and $d\leq b$ 
we are in case (1) or (2).
\end{enumerate}
\end{proof}

\begin{proposition} \label{lemma1}
Consider a commutative diagram of canonical presentations of the form: 
\begin{equation}
\begin{tikzcd}
0 & \mathbb{I}_{[a,b)} \arrow[l] \arrow[d,"\iota"] & \mathbb{I}_{[a,\infty)} \arrow[l] \arrow[d,"\iota_0"] & [5pt] \mathbb{I}_{[b,\infty)} \arrow[l,swap,"\cdot t^{b\minus a}"] \arrow[d,"\iota_1"] & 0 \arrow[l] \\
0 & M \arrow[l] \arrow[d,"\phi"] & P_0 \arrow[l,swap,"\mu"] \arrow[d,"f_0"] & P_1 \arrow[l,swap,"p"] \arrow[d,"f_1"] & 0 \arrow[l] \\
0 & N \arrow[l] \arrow[d,"\pi"] & Q_0 \arrow[l,swap,"\nu"] \arrow[d,"\pi_0"] & Q_1 \arrow[l,swap,"q"] \arrow[d,"\pi_1"] & 0 \arrow[l] \\
0 & \mathbb{I}_{[c,d)} \arrow[l] & \mathbb{I}_{[c,\infty)} \arrow[l] & \mathbb{I}_{[d,\infty)} \arrow[l,swap,"\cdot t^{d\minus c}"] & 0 \arrow[l]
\end{tikzcd}  
\end{equation}
If not $(c<d\leq a<b)$, then $(\pi\circ\phi\circ\iota=0 \iff \pi_0\circ f_0\circ \iota_0=0 \iff \pi_1\circ f_1\circ \iota_1=0)$.
\end{proposition}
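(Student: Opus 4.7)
The plan is to reduce Proposition \ref{lemma1} to the previously established Proposition \ref{appendix_prop_uniqueness_morphism} by composing the two vertical morphisms of presentations into a single morphism of elementary presentations.

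First I would observe that the stacked vertical maps compose to form a commutative diagram
\begin{equation*}
\begin{tikzcd}
0 & \mathbb{I}_{[a,b)} \arrow[l] \arrow[d,"\pi\circ\phi\circ\iota"] & \mathbb{I}_{[a,\infty)} \arrow[l] \arrow[d,"\pi_0\circ f_0\circ \iota_0"] & [10pt] \mathbb{I}_{[b,\infty)} \arrow[l,swap,"\cdot t^{b\minus a}"] \arrow[d,"\pi_1\circ f_1\circ \iota_1"] & 0 \arrow[l] \\
0 & \mathbb{I}_{[c,d)} \arrow[l] & \mathbb{I}_{[c,\infty)} \arrow[l] & \mathbb{I}_{[d,\infty)} \arrow[l,swap,"\cdot t^{d\minus c}"] & 0 \arrow[l]
\end{tikzcd}
\end{equation*}
whose rows are elementary presentations of $\mathbb{I}_{[a,b)}$ and $\mathbb{I}_{[c,d)}$ respectively. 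Commutativity of each of the two resulting squares follows from pasting the commutativity of the three squares stacked above each other in the original diagram, so this is exactly the situation to which Proposition \ref{appendix_prop_uniqueness_morphism} applies.

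Next I would split on which of the three cases of Proposition \ref{appendix_prop_uniqueness_morphism} we are in, keeping in mind the hypothesis that the configuration $c<d\leq a<b$ (case (2)) is excluded. If $c\leq a<d\leq b$, then we are in case (1) of that proposition applied to the composed diagram, which directly yields the equivalence $\pi\circ\phi\circ\iota=0 \iff \pi_0\circ f_0\circ\iota_0=0 \iff \pi_1\circ f_1\circ\iota_1=0$. Otherwise we are in case (3), where the proposition forces all three composed vertical maps to be zero, in which case the equivalence holds vacuously since all three conditions are true.

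The only step requiring any care is verifying that we really have exhausted all relevant configurations of $a,b,c,d$ once we remove case (2). This is a direct bookkeeping check: the hypothesis $a<b$ and $c<d$ is built into the elementary presentations, and removing the configuration $c<d\leq a<b$ from the three-way partition of Proposition \ref{appendix_prop_uniqueness_morphism} leaves precisely cases (1) and (3), both of which give the equivalence. No additional computation with the matrices $p,q,f_0,f_1$ is needed, because the functorial reduction to elementary presentations has absorbed all of that information into the single invocation of Proposition \ref{appendix_prop_uniqueness_morphism}.
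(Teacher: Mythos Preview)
Your proposal is correct and follows essentially the same approach as the paper: both compose the vertical maps to reduce to a morphism of elementary presentations and then invoke Proposition~\ref{appendix_prop_uniqueness_morphism}, splitting into its cases (1) and (3) since case (2) is excluded by hypothesis. The paper's proof is terser but the argument is identical.
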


\begin{proof}
By Proposition \ref{appendix_prop_uniqueness_morphism}, in case (1), we have $\pi\circ\phi\circ\iota=0\iff \pi_0\circ f_0\circ\iota_0=0\iff \pi_1\circ f_1\circ\iota_1=0$. Moreover, in case (3), we get $\pi\circ\phi\circ\iota=\pi_0\circ f_0\circ\iota_0=\pi_1\circ f_1\circ\iota_1=0$. 
\end{proof}

\noindent
Proposition~\ref{lemma1} implies that given canonical presentations as in \eqref{comp_presentations}, the only case where the entry of the matrix $g_0\circ f_0$, corresponding to the bars $\mathbb{I}_{[a,b)}$ and $\mathbb{I}_{[c,d)}$, can be non-zero is when $c<d\leq a<b$. In this case, we can add a new bar $\mathbb{I}_{[e,e)}$ of length zero with $d\leq e\leq a$ to the middle presentation in \eqref{comp_presentations}, and map $\mathbb{I}_{[a,\infty)}\rightarrow\mathbb{I}_{ [e,\infty)}$ and $\mathbb{I}_{[e,\infty)}\rightarrow \mathbb{I}_{[c,\infty)}$ in such a way that the resulting map $\mathbb{I}_{[a,b)}\rightarrow \mathbb{I}_{[c,d)}$ is zero.  Given the morphisms of presentations in \eqref{non_complex}, we add $\mathbb{I}_{[1,\infty)} \xleftarrow{1} \mathbb{I}_{[1,\infty)}$ to the middle term to obtain the following complex of presentations:
\begin{equation*} \label{complex_repaired}
\begin{tikzcd}
0 & \arrow[l] \mathbb{I}_{[1,2)} \arrow[d,"\phi"] & \arrow[l] \mathbb{I}_{[1,\infty)} \arrow[d,"\begin{pmatrix}t\\1\end{pmatrix}"] &[10pt] \arrow[l,swap,"t"] \mathbb{I}_{[2,\infty)} \arrow[d,"\begin{pmatrix}1\\t\end{pmatrix}"] & \arrow[l] 0 \\[12pt]
0 & \arrow[l] \mathbb{I}_{[0,2)} \arrow[d,"\psi"] & \arrow[l] \mathbb{I}_{[0,\infty)}\oplus \mathbb{I}_{[1,\infty)} \arrow[d,"\begin{pmatrix}1\hspace{4pt} t\end{pmatrix}"] & \arrow[l,swap,"\begin{pmatrix}t^2 \hspace{2pt} 0\\ 0 \hspace{4pt} 1\end{pmatrix}"] \mathbb{I}_{[2,\infty)}\oplus \mathbb{I}_{[1,\infty)} \arrow[d,"\begin{pmatrix}t\hspace{4pt} 1\end{pmatrix}"] & \arrow[l] 0 \\[10pt]
0 & \arrow[l] \mathbb{I}_{[0,1)} & \arrow[l] \mathbb{I}_{[0,\infty)} & \arrow[l,swap,"t"] \mathbb{I}_{[1,\infty)} & \arrow[l] 0 
\end{tikzcd}
\end{equation*}

\begin{theorem} \label{modification_thm}
Let $L\xlongrightarrow{\phi} M \xlongrightarrow{\psi} N$ be a complex of graded $\field[t]$-modules, then there exists a complex of presentations presenting $L\xlongrightarrow{\phi} M \xlongrightarrow{\psi} N$. 
\end{theorem}

\begin{proof}
See Appendix \ref{app_comp_presentations}.
\end{proof}

\begin{corollary}
Given a complex of graded $\field[t]$-modules $L\xlongrightarrow{\phi} M \xlongrightarrow{\psi} N$ and a canonical presentation as in \eqref{comp_presentations} of size $n\coloneqq\text{max}\{\text{rank}(P_0),\text{rank}(Q_0),\text{rank}(R_0)\}$, we can modify \eqref{comp_presentations} such that the modified presentation is a reduced complex of presentations of size $O(n)$.
\label{cor:preserve-rank}
\end{corollary}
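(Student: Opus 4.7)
The plan is to explicitly construct the modification outlined in the discussion preceding Theorem~\ref{modification_thm}, but in a way that controls the growth of the middle presentation by only a constant factor, which Theorem~\ref{modification_thm} alone does not guarantee.

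First I would extract an explicit obstruction lift: since $\psi\circ\phi=0$ and the bottom row of~\eqref{comp_presentations} is exact, the composition $g_0\circ f_0$ takes values in $\Ker\nu=\im r$, and freeness of $P_0$ produces a homogeneous lift $h\colon P_0\rightarrow R_1$ with $r\circ h=g_0\circ f_0$. Then I would absorb this obstruction by enlarging \emph{only} the middle presentation, setting $\tilde{Q}_0\coloneqq Q_0\oplus R_1$, $\tilde{Q}_1\coloneqq Q_1\oplus R_1$ with $\tilde{q}\coloneqq\begin{pmatrix}q & 0\\ 0 & \id\end{pmatrix}$, and replacing the four vertical morphisms by
\begin{equation*}
\tilde{f}_0\coloneqq\begin{pmatrix}f_0\\ \minus h\end{pmatrix},\quad \tilde{f}_1\coloneqq\begin{pmatrix}f_1\\ \minus h\circ p\end{pmatrix},\quad \tilde{g}_0\coloneqq\begin{pmatrix}g_0 & r\end{pmatrix},\quad \tilde{g}_1\coloneqq\begin{pmatrix}g_1 & \id\end{pmatrix}.
\end{equation*}
The added acyclic summand $R_1\xleftarrow{\id} R_1$ does not change $\text{coker}\,\tilde{q}\cong M$, and $\tilde{q}$ inherits injectivity from $q$ (canonical presentations are reduced), so $\tilde{Q}_0\leftarrow\tilde{Q}_1$ is indeed a reduced presentation of $M$.

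Next I would verify that the modified diagram is a complex of presentations. Commutativity of the two new squares and the identity $\tilde{g}_0\circ\tilde{f}_0=g_0 f_0-rh=0$ are immediate from the construction of $h$. The subtle point, and the main obstacle, is the degree-one identity $\tilde{g}_1\circ\tilde{f}_1=g_1 f_1-h\circ p$, which is not manifestly zero. Applying $r$ gives $r(g_1 f_1-h\circ p)=g_0 q f_1-g_0 f_0 p=0$ by commutativity of the original diagram, and injectivity of $r$ (again forced by the canonical form) upgrades this to the desired identity.

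Finally, the size bound is essentially free: canonicity pairs each relation in $R_1$ with a unique finite-bar generator in $R_0$, so $\text{rank}(R_1)\leq\text{rank}(R_0)\leq n$. Hence $\text{rank}(\tilde{Q}_0),\text{rank}(\tilde{Q}_1)\leq 2n$, while the outer presentations of $L$ and $N$ are untouched, yielding overall size $O(n)$.
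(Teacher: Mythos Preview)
Your argument is correct and in fact cleaner than the paper's, but it proceeds along a genuinely different route. The paper works bar-by-bar via Proposition~\ref{lemma1}: it identifies, for each generator $\mathbb{I}_{[a,b)}$ of $L$, all bars $\mathbb{I}_{[c,d)}$ of $N$ with $c<d\leq a<b$ for which the $(a,c)$-entry of $g_0\circ f_0$ is nonzero, and inserts a single length-zero summand $\mathbb{I}_{[a,\infty)}\xleftarrow{\id}\mathbb{I}_{[a,\infty)}$ into the middle presentation to kill all of them at once. This adds at most $\text{rank}(P_0)\leq n$ generators to $Q_0$, and the explicit matrix description matches the implementation given immediately after the corollary (compute $g_0\circ f_0$, then for each nonzero column add one row to $f_0$ and one column to $g_0$). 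You instead enlarge the middle by a copy of $R_1$ and use a single global lift $h$ of $g_0\circ f_0$ through $r$; the vanishing of $\tilde g_1\circ\tilde f_1$ then falls out of the injectivity of $r$ rather than from any barcode combinatorics. Your construction uses only exactness and reducedness, never the canonical form, so it is slightly more general; the paper's construction, by contrast, is tailored to the interval decomposition and translates directly into the $O(n^3)$ matrix procedure. A minor remark: the freeness of $P_0$ you invoke is not actually what produces $h$; the lift exists (uniquely, and automatically homogeneously) simply because $r$ is injective onto its image, which contains $\im(g_0\circ f_0)$.
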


\begin{proof}
See Appendix \ref{app_comp_presentations}.
\end{proof}

\noindent
The modification described in the proof of Corollary \ref{cor:preserve-rank} can be implemented in the following way: \\

\noindent
\textbf{Input:} The matrices $f_0$ and $g_0$ with birth/death annotations representing canonical presentations of $\phi$ and $\psi$. \\

\noindent
\textbf{Step 1:} Compute the matrix product $g_0\circ f_0$. \\

\noindent
\textbf{Step 2:} For every non-zero column $c_j$ of $g_0\circ f_0$, add a row $(0,\ldots,0,\underset{j}{1},0,\ldots,0)$ with birth and death index $b(c_j)$ to the end of $f_0$ and the column $-c_j$ with birth and death index $b(c_j)$ to the end of $g_0$. \\

\noindent
\textbf{Output:} The modified matrices $f_0$ and $g_0$. \\

\noindent
This modification takes $O(n^3)$ time.

\cancel{
The modification described in the proof of Corollary \ref{cor:preserve-rank} can be implemented in the following way: Compute the matrix product $g_0\circ f_0$. For every non-zero column $c_j$ of $g_0\circ f_0$ we add a row of degree $\text{deg}(c_j)$ to the end of $f_0$ with $1$ in column $j$ and $0$ everywhere else and the column $-c_j$ of degree $\text{deg}(c_j)$ to the end of $g_0$. Moreover, we add a column and row of degree $\text{deg}(c_j)$ to the end of $q$ with entry $1$ in the last column and row index and $0$ elsewhere. Since $f_1$ and $g_1$ are uniquely determined by the canonical form (with length zero bars) and $f_0$ and $g_0$ we can modify the matrices $f_1$ and $g_1$ accordingly. This modification takes $O(n^3)$ time.
}


\section{Applications} \label{sec_applications}

\subsection{Persistent homology of simplicial towers} \label{sec_application_tower}

We consider a simplicial tower:  
\begin{equation} \label{application_tower}
\begin{tikzcd}
\Vec{K}:\, K_0 \arrow[r,"f_0"] & K_1 \arrow[r,"f_1"] & \cdots \arrow[r,"f_{m\minus 1}"] &[2pt] K_m
\end{tikzcd}
\end{equation}
where $K_i$ is a finite simplicial complex and $f_i$ an arbitrary simplicial map. Our goal is to compute the barcode of the homology persistence module
\begin{equation} \label{tower_complex_homology}
\begin{tikzcd}[column sep=large]
H_k(\Vec{K}):\, H_k(K_0) \arrow[r,"H_k(f_0)"] & H_k(K_1) \arrow[r,"H_k(f_1)"] & \cdots \arrow[r,"H_k(f_{m\minus 1})"] &[5pt] H_k(K_m) \arrow[r,"\cong"] &[-3pt] \cdots
\end{tikzcd}
\end{equation}
where we artificially extend finite persistence modules by isomorphisms to infinite persistence modules of finite type to fit them into the algebraic framework discussed in Section \ref{sec_background}. From an input tower~\eqref{application_tower}, we can construct the following complex of  persistence modules for computing its persistent homology
\begin{equation} \label{tower_complex}
\begin{tikzcd}
C_{k+1}(\Vec{K}) \arrow[r,"\partial_{k+1}"] & C_{k}(\Vec{K}) \arrow[r,"\partial_{k}"] & C_{k\minus 1}(\Vec{K}) 
\end{tikzcd}
\end{equation}
where $C_k(\Vec{K})$ is the persistence module of simplicial $k$-chains in $\Vec{K}$ defined by
\begin{equation} \label{simplex_modules}
\begin{tikzcd}[column sep=large]
C_k(\Vec{K}):\, C_k(K_0) \arrow[r,"C_k(f_0)"] &[0pt] C_k(K_1) \arrow[r,"C_k(f_1)"] &[0pt]  \cdots \arrow[r,"C_k(f_{m\minus 1})"] &[5pt]  C_k(K_m) \arrow[r,"\cong"] &[-3pt] \cdots
\end{tikzcd}
\end{equation}
Since simplices can be collapsed in a tower, the linear maps $C_k(f_i)$ are not necessarily injective. Thus, in general $C_k(\Vec{K})$ is not a free module, but we can use the pipeline developed in Sections \ref{sec_presentation_algorithm}, \ref{sec_comp_presentations} and \ref{sec_cohomology_presentations} to compute the barcode of \eqref{tower_complex_homology}. 

\textbf{Algorithm Tower:} In practice a tower is usually represented as a sequence of elementary inclusions (adding a single simplex) and elementary collapses (collapsing exactly two vertices). It is known that every tower can be represented in this way~\cite{DFW14}, so w.l.o.g.\ we assume that \eqref{application_tower} is a sequence of elementary inclusions and collapses. Every collapse can be thought of as making some simplices (chains) collapse trivially to $0$, and some other pairs of simplices merging together. The number of such trivial collapses and mergings cannot be more than the number of elementary inclusions. So, we define the size $n$ of the tower as the number of elementary inclusions in $\Vec{K}$. The special kind of persistence modules \eqref{simplex_modules}, arising from simplicial towers, significantly simplify the computation of a presentation of \eqref{tower_complex}. The reason for this simplification is that in the matrices $C_k(f_i)$ in \eqref{simplex_modules} each column is either zero or contains a single $1$ which implies that each column is reduced by a single other column and there is no need to explicitly construct these matrices. In fact, a canonical presentation of $C_k(\Vec{K})$ can be computed in $O(n)$ time directly by observing elementary inclusions (creating generators), trivial collapses (pairing simplices to itself), and mergings (pairing the relation to the simplex in the merge appearing later in the filtration). Corresponding to every merge, we have a basis change which needs to be reflected by a sum of two columns in the boundary matrices that connect two modules. This will incur quadratic complexity (see Appendix \ref{app_application_tower}, Proposition \ref{prop_modified_presentation_algo}). By Proposition \ref{lemma1} and the fact that the boundary of a simplicial chain can not die before the chain itself, this computation of a canonical presentation of the complex in \eqref{tower_complex} yields a complex of presentations.

\begin{theorem}
Given a simplicial tower \eqref{application_tower} of size $n$, the algorithm \textbf{Tower} outlined above computes the persistent homology, i.e., the barcode of \eqref{tower_complex_homology} in $O(n^3)$ time.
\label{thm:compute-tower}
\end{theorem}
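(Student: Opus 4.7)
The plan is to reduce the theorem to an application of \textbf{PresHom}, with the main work being a verification that the algorithm \textbf{Tower} actually produces a valid complex of presentations of \eqref{tower_complex} directly from the elementary tower operations, without ever materialising the matrices $C_k(f_i)$ explicitly. Assuming this, Theorem~\ref{thm_cohomology_presentation} (together with Theorem~\ref{thm:perscohom}) immediately gives the barcode of $H_k(\Vec{K})$ in $O(n^3)$ time.

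First I would justify the canonical presentation of each $C_k(\Vec{K})$ produced by \textbf{Tower}. An elementary inclusion at time $i$ adding a $k$-simplex $\sigma$ contributes one generator of $C_k(\Vec{K})$ with birth $i$. An elementary collapse identifying two vertices $u,v$ at time $i$ either makes some $k$-chain collapse to $0$, producing a generator-relation pair of equal birth and death (a trivial elementary summand that we later discard by canonicality), or merges two $k$-simplices $\sigma,\sigma'$ with $\sigma$ born earlier; this yields a relation that pairs with the generator of the later-born $\sigma'$ and is realised on the basis change $\sigma \mapsto \sigma+\sigma'$. Since each elementary inclusion contributes at most one simplex, and trivial collapses/mergings are bounded by the number of inclusions, the total number of generators and relations in all $C_k(\Vec{K})$ is $O(n)$, and the (unordered, combinatorial) data of the canonical presentation can be read off in $O(n)$ time.

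Next I would build the presentation morphisms $\partial_k$ between these canonical presentations. The boundary of a generator $[\sigma]$ is the simplicial boundary of $\sigma$, translated through the basis changes recorded by the mergings; this is where the $O(n^2)$ cost arises, since each merging propagates a column sum to the boundary matrices. To verify that the resulting diagram is a \emph{complex of presentations} in the sense of Section~\ref{sec_cohomology_presentations}, I need to check that $\partial_{k-1}\circ\partial_k = 0$ holds on the generator level of the presentation and not merely at the module level. By Proposition~\ref{lemma1}, this module-level vanishing transfers to the generator matrices $g_0\circ f_0$ provided we are never in the forbidden configuration $c < d \le a < b$ relating a bar in $C_k(\Vec{K})$ to a bar in $C_{k-2}(\Vec{K})$. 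This is precisely the point flagged in the text: the boundary of a chain cannot die before the chain itself (any collapse that kills $[\sigma]$ kills all simplices in its support simultaneously), so the death of a generator of $C_{k-2}(\Vec{K})$ hit by $\partial_{k-1}\partial_k$ can never strictly precede the birth of the generator of $C_k(\Vec{K})$ it comes from. Hence only the cases (1) and (3) of Proposition~\ref{appendix_prop_uniqueness_morphism} occur, and $\partial_{k-1}\circ\partial_k = 0$ at the module level forces the generator-level composition to vanish. The hard part is really this compatibility check; once it is in hand, no modification of the type required by Corollary~\ref{cor:preserve-rank} is needed.

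Finally, the complexity bookkeeping: each presentation has rank $O(n)$, so $f_0$, $g_0$ and the auxiliary $p,q,r,f_1,g_1$ are of size $O(n)\times O(n)$ and are assembled in $O(n^2)$ time as above. Invoking \textbf{PresHom} on the resulting complex of presentations yields the barcode of $\ker\partial_k/\im\partial_{k+1} \cong H_k(\Vec{K})$ in $O(n^3)$ time by Theorem~\ref{thm:perscohom}, which dominates the total cost and completes the proof.
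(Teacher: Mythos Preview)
Your proposal is correct and follows essentially the same route as the paper: build the canonical presentation of each $C_k(\Vec{K})$ and the boundary matrices directly from elementary inclusions and collapses in $O(n^2)$ (the paper packages this as Proposition~\ref{prop_modified_presentation_algo}), invoke Proposition~\ref{lemma1} together with the fact that faces of a live simplex are live to conclude the result is already a complex of presentations, and then apply \textbf{PresHom} via Theorem~\ref{thm:perscohom}. One minor slip: a trivial collapse of a $k$-simplex at time $i$ pairs its existing generator (born at its inclusion time $j<i$) with a relation at time $i$, giving a bar $[j,i)$, not a length-zero bar; and your parenthetical ``any collapse that kills $[\sigma]$ kills all simplices in its support simultaneously'' is not the right justification---the point is rather that whenever a simplex is present all of its faces are present, so no face bar can have death $\le$ the simplex's birth.
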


\begin{proof}
See Appendix \ref{app_application_tower}.
\end{proof}

\subsection{Persistent cosheaf homology over simplicial towers} \label{sec_applications_cosheaf_tower}
In this section and the next, we consider cosheaves and sheaves
over finite simplicial complexes. Briefly, as depicted in Figure \ref{fig:cosheaf}, a (co)sheaf is an assignment of vector spaces on simplices of a simplicial complex and linear maps among them, which satisfy certain conditions of commutativity. Similar to the homology of a simplicial complex, the homology of a cosheaf over a simplicial complex can be computed from a chain complex where the boundary operators are determined by how the vector spaces over $k$-simplices map to the vector spaces over their $(k\minus 1)$-dimensional faces. For some background on cosheaves over persistent spaces, persistent sheaves, and, their persistent homology and cohomology, see Appendices~\ref{app_sheaves} and~\ref{app_cosheaves}.

\begin{figure}
    \centering
    \includegraphics[scale=0.6]{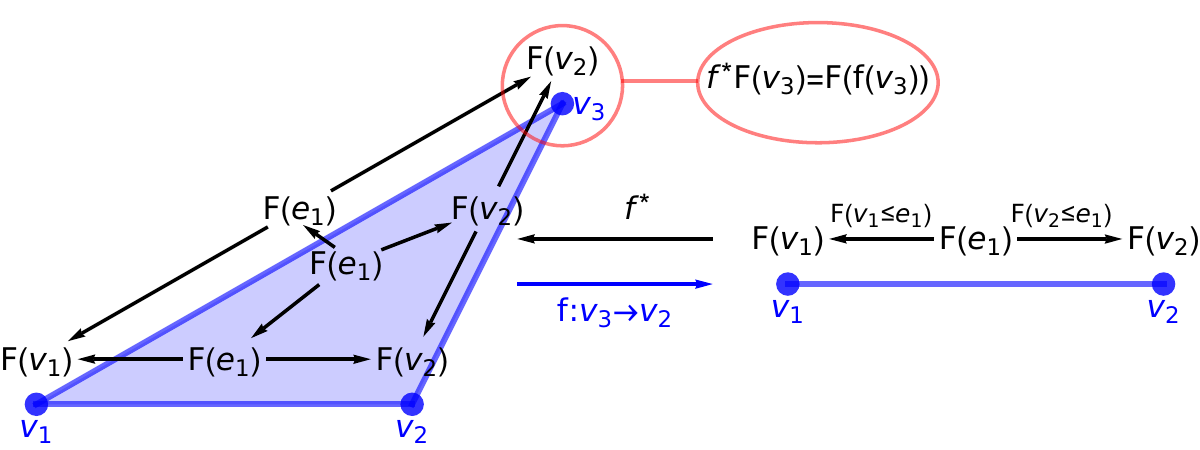}
    \caption{The pullback $f^*F$ of a cosheaf $F$ along an elementary collapse $f:\text{v}_3\rightarrow \text{v}_2$.}
    \label{fig:cosheaf}
\end{figure}

The persistent homology of a tower can be viewed as a special case of persistent cosheaf homology over a tower (Appendix~\ref{app_cosheaves}) where the cosheaf is the constant cosheaf. We consider a simplicial tower as in the lower row of \eqref{application_cosheaf_tower} and a cosheaf $F\colon K_m^{\text{op}}\rightarrow \mathbf{vec}$ over $K_m$.
\begin{equation} \label{application_cosheaf_tower}
\begin{tikzcd}
&[-25pt] F^0 & F^1 \arrow[l,swap,mapsto,"f_{0}^*"] & \cdots \arrow[l,swap,mapsto,"f_{1}^*"] & F^m \arrow[l,swap,mapsto,"f_{m\minus 1}^*"] \\[-13pt]
\Vec{K}:& K_0 \arrow[r,"f_0"] & K_1 \arrow[r,"f_1"] & \cdots \arrow[r,"f_{m\minus 1}"] &[2pt] K_m
\end{tikzcd}
\end{equation}
As shown in Figure \ref{fig:cosheaf}, we use the inverse image functors $f^*_i$ to iteratively pull back $F=F^m$ to cosheaves on all the complexes $K_i$ by defining $F^i\coloneqq f_i^*F^{i+1}$. We now use the induced maps $H_k(f_i)\colon H_k(K_i,F^i)\rightarrow H_k(K_{i+1},F^{i+1})$ to construct the cosheaf homology persistence module
\begin{equation} \label{tower_cosheaf_complex_homology} \small
\begin{tikzcd}[column sep=large]
H_k(\Vec{K},F):\, H_k(K_0,F^0) \arrow[r,"H_k(f_0)"] & H_k(K_1,F^1) \arrow[r,"H_k(f_1)"] & \cdots \arrow[r,"H_k(f_{m\minus 1})"] &[7pt] H_k(K_m,F^m) \arrow[r,"\cong"] &[-10pt] \cdots
\end{tikzcd}
\end{equation}
The persistent cosheaf homology is the homology of the complex of persistence modules 
\begin{equation*} \label{tower_cosheaf_complex}
\begin{tikzcd}
C_{k+1}(\Vec{K},F) \arrow[r,"\partial_{k+1}"] & C_{k}(\Vec{K},F) \arrow[r,"\partial_{k}"] & C_{k\minus 1}(\Vec{K},F) 
\end{tikzcd}
\end{equation*}
where $C_k(\Vec{K},F)$ is the persistence module of finite type defined by 
\begin{equation*}\small
\begin{tikzcd}
C_k(\Vec{K},F):\, C_k(K_0,F^0) \arrow[r,"C_k(f_0)"] &[8pt] C_k(K_1,F^1) \arrow[r,"C_k(f_1)"] &[8pt]  \cdots \arrow[r,"C_k(f_{m\minus 1})"] &[16pt]  C_k(K_m,F^m) \arrow[r,"\cong"] & \cdots
\end{tikzcd}
\end{equation*}
\textbf{Algorithm Cosheaf:} As in the case of simplicial towers the persistence modules $C_k(\Vec{K},F)$ are not necessarily free. We can again assume that the tower is given by a sequence of elementary inclusions and collapses and define the size $n$ of the instance $(\Vec{K},F)$ as $n\coloneqq \text{\# elementary inclusions}\times\underset{\sigma\in K_m}{\text{max}}\text{dim}\big(F(\sigma)\big)$. We can now use our pipeline developed in Sections \ref{sec_presentation_algorithm}, \ref{sec_comp_presentations} and \ref{sec_cohomology_presentations} to compute the barcode of \eqref{tower_cosheaf_complex_homology} where the same simplifications as in the tower case (Section \ref{sec_application_tower}) apply
(see Appendix \ref{sec_appendix_cosheaf_tower}, Proposition \ref{prop_modified_presentation_algo_cosheaf}).
\begin{theorem}
Given an instance $(\Vec{K},F)$ of size $n$ as in \eqref{application_cosheaf_tower}, the algorithm \textbf{Cosheaf} above computes the persistent cosheaf homology, i.e.,\ the barcode of \eqref{tower_cosheaf_complex_homology} in $O(n^3)$ time.
\label{thm:compute-sheaf}
\end{theorem}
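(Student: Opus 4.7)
The plan is to follow the same pipeline used in the proof of Theorem \ref{thm:compute-tower}, but with cosheaf-valued chains in place of constant-coefficient chains, and then to show that none of the steps lose the cubic bound. First, I would preprocess the input: since $\Vec{K}$ is given as a sequence of elementary inclusions and elementary collapses, the pullback cosheaves $F^i = f_i^* F^{i+1}$ can be built iteratively from $F^m$ in a single right-to-left sweep, because an elementary collapse only requires taking a direct sum of stalks at the merged vertex (and restricting the extension maps accordingly), while an elementary inclusion requires no work on the cosheaf side. The total cost of this preprocessing is $O(n)$ by the definition of $n$.

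Next, I would compute canonical presentations of each persistence module $C_k(\Vec{K},F)$ directly from the tower, as in algorithm \textbf{Tower}. A basis of $C_k(K_i,F^i)$ is naturally indexed by pairs $(\sigma,e)$ with $\sigma$ a $k$-simplex of $K_i$ and $e$ a basis vector of the stalk $F^i(\sigma)$. Under an elementary inclusion all such pairs for the new simplex are born; under an elementary collapse merging $\sigma_1,\sigma_2\mapsto\tau$, the map $C_k(f_i)$ acts on $(\sigma_j,e)$ by the corresponding stalk map into $F^{i+1}(\tau)$, so every column of $C_k(f_i)$ is either zero (trivial collapse, contributing a length-zero or finite bar) or is paired with a unique other column (merging, contributing a relation on a previously born generator). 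This is exactly the simplification exploited in Section \ref{sec_application_tower}, as formalised in Proposition \ref{prop_modified_presentation_algo_cosheaf}, and it yields a canonical presentation of $C_k(\Vec{K},F)$ of total rank $O(n)$. Producing this presentation requires reflecting each merge as a column sum in the boundary matrices that connect consecutive chain modules, which by the amortised argument of the tower case takes $O(n^2)$ time per dimension.

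Third, I would verify that the three canonical presentations obtained in this way already fit together into a complex of presentations. The chain boundary $\partial_k^{F^i}$ of a cosheaf chain is a linear combination of pairs $(\tau,F(\tau\leq\sigma)e)$ supported on faces of $\sigma$, and the extension maps $F(\tau\leq\sigma)$ commute with pullback along $f_i$, so a boundary cannot die strictly before the chain it bounds. Consequently the "bad case" $c<d\leq a<b$ of Proposition \ref{lemma1} never occurs between $C_{k+1}(\Vec{K},F)$, $C_k(\Vec{K},F)$ and $C_{k-1}(\Vec{K},F)$, and the sequence of canonical presentations is automatically a complex of presentations; should any spurious entry appear, Corollary \ref{cor:preserve-rank} patches it in $O(n^3)$ time while keeping the size linear in $n$. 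Finally, I would invoke algorithm \textbf{PresHom} (Theorem \ref{thm:perscohom}) on this complex of presentations of size $O(n)\times O(n)$ to obtain the barcode of \eqref{tower_cosheaf_complex_homology} in $O(n^3)$ time.

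The main obstacle, as in the tower case, is the accounting: one must check that the basis changes induced by merges propagate through the boundary matrices without blowing up the overall size beyond $O(n)$, and that the added book-keeping of stalk dimensions is absorbed into the definition $n=(\#\text{elementary inclusions})\cdot\max_\sigma\dim F(\sigma)$. Once this is done, correctness is a direct consequence of Theorems \ref{thm:perscohom} and \ref{thm:bar-from-matrices} applied to the cosheaf chain complex, and the $O(n^3)$ bound follows by summing the costs of the four stages above.
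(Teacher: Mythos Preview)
Your approach is essentially the paper's: invoke Proposition~\ref{prop_modified_presentation_algo_cosheaf} to get size-$O(n)$ canonical presentations of the $C_k(\Vec{K},F)$ in $O(n^2)$ time, then apply the modification of Section~\ref{sec_comp_presentations} and \textbf{PresHom} (Theorem~\ref{thm:perscohom}) for the $O(n^3)$ bound. One slip worth correcting: pulling back a cosheaf along an elementary collapse does \emph{not} involve a direct sum of stalks---by definition $f^*F(\sigma)=F(f(\sigma))$, so each stalk of $F^i$ is simply a copy of a stalk of $F^{i+1}$ (see \eqref{induced_maps_cochain}, where the nonzero blocks of $C_k(f_i)$ are identity maps, not inclusions into a sum)---but this does not affect your argument, since the presentation step only uses that $C_k(f_i)$ has the same block structure as in the constant-cosheaf case.
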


\begin{proof}
See Appendix \ref{sec_appendix_cosheaf_tower}.
\end{proof}

\begin{example}
We consider the following simplicial tower $\Vec{K}$:
\begin{equation*}
\begin{tikzcd}[row sep=small, column sep=small]
\phantom{\bullet} & \phantom{\bullet} & F^8 & \phantom{\bullet} & \phantom{\bullet} & \phantom{\bullet} & F^9 \arrow[llll,swap,mapsto,"z\rightarrow w"] & \phantom{\bullet} & \phantom{\bullet} & F^{10} \arrow[lll,swap,mapsto,"w\rightarrow v"] & \phantom{\bullet} & F^{11} \arrow[ll,swap,mapsto,"v\rightarrow u"] & \phantom{\bullet} \\[-12pt]
\phantom{\bullet} \arrow[rrrrrrrrrrrr,dash,shorten=-7pt] & \phantom{\bullet} & \phantom{\bullet} & \phantom{\bullet} & \phantom{\bullet} & \phantom{\bullet} & \phantom{\bullet} & \phantom{\bullet} & \phantom{\bullet} & \phantom{\bullet} & \phantom{\bullet} & \phantom{\bullet} & \phantom{\bullet} \\[-8pt]
\phantom{\bullet} & \field \arrow[d,swap,"1",<-] \arrow[r,"1",<-] & \field & \field \arrow[l,swap,"1",<-] \arrow[dl,swap,"1",<-] \arrow[d,<-,"1"] & \phantom{\bullet} & \field \arrow[d,swap,"1",<-] \arrow[r,"1",<-] & \field & \field \arrow[l,swap,"1",<-] \arrow[dl,swap,"1",<-] & \phantom{\bullet} & \field \arrow[d,swap,"1",<-] & \phantom{\bullet} & \phantom{\bullet} & \phantom{\bullet} \\[5pt]
\phantom{\bullet} & \field & \field & \field & \phantom{\bullet} & \field & \field & \phantom{\bullet} & \phantom{\bullet} & \field & \phantom{\bullet} & \phantom{\bullet} & \phantom{\bullet} \\[5pt]
\phantom{\bullet} & \field \arrow[u,"1",<-] \arrow[r,"1",<-] \arrow[ur,"1",<-] & \field & \field \arrow[u,swap,"1",<-] \arrow[l,swap,"1",<-] & \phantom{\bullet} & \field \arrow[u,"1",<-] \arrow[ur,"1",<-] & \phantom{\bullet} & \phantom{\bullet} & \phantom{\bullet} & \field \arrow[u,"1",<-] & \phantom{\bullet} & \field & \phantom{\bullet} \\[-8pt]
\phantom{\bullet} \arrow[rrrrrrrrrrrr,dash,shorten=-7pt] & \phantom{\bullet} & \phantom{\bullet} & \phantom{\bullet} & \phantom{\bullet} & \phantom{\bullet} & \phantom{\bullet} & \phantom{\bullet} & \phantom{\bullet} & \phantom{\bullet} & \phantom{\bullet} & \phantom{\bullet} & \phantom{\bullet} \\[-10pt]
\phantom{\bullet} & v & \phantom{\bullet} & w & \phantom{\bullet} & v & \phantom{\bullet} & w & \phantom{\bullet} & v & \phantom{\bullet} & \phantom{\bullet} & \phantom{\bullet} \\[-12pt]
\phantom{\bullet} & \bullet \arrow[rr,dash,shorten=-5pt] & \phantom{\bullet} & \bullet & \phantom{\bullet} & \bullet \arrow[rr,dash,shorten=-5pt] & \phantom{\bullet} & \bullet & \phantom{\bullet} & \bullet & \phantom{\bullet} & \phantom{\bullet} & \phantom{\bullet} \\
\phantom{\bullet} & \phantom{\bullet} & \phantom{\bullet} & \phantom{\bullet} & \phantom{\bullet} & \phantom{\bullet} & \phantom{\bullet} & \phantom{\bullet} & \phantom{\bullet} & \phantom{\bullet} & \phantom{\bullet} & \phantom{\bullet} & \phantom{\bullet} \\
\phantom{\bullet} & \bullet \arrow[rr,dash,shorten=-5pt] \arrow[uu,dash,shorten=-5pt] \arrow[uurr,dash,shorten=-9pt] & \phantom{\bullet}& \bullet \arrow[uu,dash,shorten=-5pt] & \phantom{\bullet} & \bullet \arrow[uu,dash,shorten=-5pt] \arrow[uurr,dash,shorten=-9pt] & \phantom{\bullet} & \phantom{\bullet} & \phantom{\bullet} & \bullet \arrow[uu,dash,shorten=-5pt] & \phantom{\bullet} & \bullet & \phantom{\bullet} \\[-12pt]
\phantom{\bullet} & u & \phantom{\bullet} & z & \phantom{\bullet} & u & \phantom{\bullet} & \phantom{\bullet} & \phantom{\bullet} & u & \phantom{\bullet} & u & \phantom{\bullet} \\[-10pt]
\phantom{\bullet} \arrow[rrrrrrrrrrrr,dash,shorten=-7pt] \arrow[uuuuuuuuuu,dash,shorten=-6pt] & \phantom{\bullet} & \phantom{\bullet} & \phantom{\bullet} & \phantom{\bullet} \arrow[uuuuuuuuuu,dash,shorten=-6pt] & \phantom{\bullet} & \phantom{\bullet} & \phantom{\bullet} & \phantom{\bullet} \arrow[uuuuuuuuuu,dash,shorten=-6pt] & \phantom{\bullet} & \phantom{\bullet} \arrow[uuuuuuuuuu,dash,shorten=-6pt] & \phantom{\bullet} & \phantom{\bullet} \arrow[uuuuuuuuuu,dash,shorten=-6pt] \\[-12pt]
\phantom{\bullet} & \phantom{\bullet} & K_8 \arrow[rrrr,"z\rightarrow w"] & \phantom{\bullet} & \phantom{\bullet} & \phantom{\bullet} & K_9 \arrow[rrr,"w\rightarrow v"] & \phantom{\bullet} & \phantom{\bullet} & K_{10} \arrow[rr,"v\rightarrow u"] & \phantom{\bullet} & K_{11} & \phantom{\bullet} 
\end{tikzcd}
\end{equation*}
where we assume that $K_8$ is obtained by simplex-wise inclusions in order
\begin{equation*}
\begin{pmatrix} 0 & 1 & 2 & 3 & 4 & 5 & 6 & 7 & 8 \\ u & v & w & z & uv & vw & wz & uz & uw \end{pmatrix} .   
\end{equation*}
Moreover, we consider the constant cosheaf $F=F^{11}$ on $K_{11}$ defined by $F(u)\coloneqq\field$ and set $\field\coloneqq \mathbb{Z}_2$. By pulling back $F$ along the maps $f_i\colon K_i\rightarrow K_{11}$ of the tower we obtain cosheaves $F^i=f_i^*F$ defined by $F^i(\sigma)=F\big(f_i(\sigma)\big)=F(u)=\field$ for all $\sigma\in K_i$. Thus we obtain the constant cosheaf on all $K_i$ in $ \Vec{K}$. We now demonstrate how to compute a presentation of the morphism $\partial_1\colon C_1(\Vec{K},F)\rightarrow C_0(\Vec{K},F)$. We process the cosheaf-tower from $(K_0,F^0)$ to $(K_{11},F^{11})$ and build the matrix $f_0$ with birth/death annotations. We start with an empty matrix $f_0=\emptyset$. If an edge $\tau$ is included at index $i$ we add the matrix $\partial_1(\tau)\colon C_1(K_i,F^i)|_{F^i(\tau)}\rightarrow C_0(K_i,F^i)$ consisting of $\text{dim}(F^i(\tau))=1$ columns with $b(\text{col})=i$ and $d(\text{col})=\infty$ to $f_0$. If a vertex $\sigma$ is included at index $j$ we add a zero-matrix consisting of $\text{dim}(F^j(\sigma))=1$ rows to $f_0$ with $b(\text{row})=j$ and $d(\text{row})=\infty$. Note that the boundary matrix of the constant cosheaf $\partial_1\colon C_1(K_i,F^i)\rightarrow C_0(K_i,F^i)$ agrees with the boundary matrix of the simplicial complex itself. Hence, after processing $K_0,\ldots, K_8$ we have 
\begin{equation*} \footnotesize
f_0=
\begin{blockarray}{ccccccc}
& \text{[}4,\infty) & \text{[}5,\infty) & \text{[}6,\infty) & \text{[}7,\infty) & \text{[}8,\infty) & \\
\begin{block}{c(ccccc)c}
\text{[}0,\infty) & 1 & 0 & 0 & 1 & 1 & u \\
\text{[}1,\infty) & 1 & 1 & 0 & 0 & 0 & v \\
\text{[}2,\infty) & 0 & 1 & 1 & 0 & 1 & w \\
\text{[}3,\infty) & 0 & 0 & 1 & 1 & 0 & z \\
\end{block}
& uv & vw & wz & uz & uw & \\
\end{blockarray}  
\end{equation*}
At index $9$ the vertex $z$ gets collapsed into $w$. As a consequence the edge $uz$ is merged into $uw$ and the edge $wz$ is collapsed to the vertex $w$. Hence, we have to add the column corresponding to $uz$, which is born first, to the column corresponding to $uw$ and set $d(uw)=9$ and $d(wz)=9$. Moreover, we have to add the row corresponding to $z$ to the one corresponding to $w$ and set $d(z)=9$. After this step we have 
\begin{equation*} \footnotesize
f_0=
\begin{blockarray}{cccccc}
& \text{[}4,\infty) & \text{[}5,\infty) & \text{[}6,9) & \text{[}7,\infty) & \text{[}8,9) \\
\begin{block}{c(ccccc)}
\text{[}0,\infty) & 1 & 0 & 0 & 1 & 0  \\
\text{[}1,\infty) & 1 & 1 & 0 & 0 & 0 \\
\text{[}2,\infty) & 0 & 1 & 0 & 1 & 0 \\
\text{[}3,9) & 0 & 0 & 1 & 1 & 1 \\
\end{block}
\end{blockarray}  
\end{equation*}
When moving from $9$ to $10$ the vertex $w$ is collapsed into $v$. As a consequence the edge $uw$ is merged into $uv$ and the edge $vw$ is collapsed into $v$. Hence, we have to add the column corresponding to $uv$ to the column corresponding to $uz$ and set $d(uz)=10$ and $d(vw)=10$. Moreover, we have to add the row corresponding to $w$ to the one corresponding to $v$ and set $d(w)=10$. After this step we have 
\begin{equation*} \footnotesize
f_0=
\begin{blockarray}{cccccc}
& \text{[}4,\infty) & \text{[}5,10) & \text{[}6,9) & \text{[}7,10) & \text{[}8,9) \\
\begin{block}{c(ccccc)}
\text{[}0,\infty) & 1 & 0 & 0 & 0 & 0 \\
\text{[}1,\infty) & 1 & 0 & 0 & 0 & 0 \\
\text{[}2,10) & 0 & 1 & 0 & 1 & 0 \\
\text{[}3,9) & 0 & 0 & 1 & 1 & 1 \\
\end{block}
\end{blockarray}  
\end{equation*}
When moving from $10$ to $11$ the vertex $v$ is collapsed into $u$. As a consequence the edge $uv$ is collapsed into $u$. Hence, we have to set $d(uv)=11$. Moreover, we have to add the row corresponding to $v$ to the one corresponding to $u$ and set $d(v)=11$. After this step we obtain the final result 
\begin{equation*} \footnotesize
f_0=
\begin{blockarray}{cccccc}
& \text{[}4,11) & \text{[}5,10) & \text{[}6,9) & \text{[}7,10) & \text{[}8,9) \\
\begin{block}{c(ccccc)}
\text{[}0,\infty) & 0 & 0 & 0 & 0 & 0 \\
\text{[}1,11) & 1 & 0 & 0 & 0 & 0 \\
\text{[}2,10) & 0 & 1 & 0 & 1 & 0 \\
\text{[}3,9) & 0 & 0 & 1 & 1 & 1 \\
\end{block}
\end{blockarray}  
\end{equation*}

\end{example}

\subsection{Cohomology of persistent sheaves over simplicial complexes} \label{sec_pers_shv_simp}
In this section we compute the cohomology of a persistent sheaf over a simplicial complex. We consider a persistent sheaf $\Vec{F}\colon\mathbb{N}_0\rightarrow \mathbf{Shv}(K,\mathbf{vec})$ (Appendix~\ref{app_sheaves}) of finite type on a finite simplicial complex $K$, i.e.\ a diagram of sheaves and sheaf morphisms (see Figure \ref{fig:sheaf_example_main}) over $K$
\begin{equation} \label{persistent_sheaf}
\begin{tikzcd}
\Vec{F}:\, F_0 \arrow[r,"\phi_0"] & F_1 \arrow[r,"\phi_1"] & \cdots \arrow[r,"\phi_{m\minus 1}"] &[3pt] F_m \arrow[r,"\cong"] & \cdots
\end{tikzcd}
\end{equation}
where $\phi_i$ is an isomorphism for all $i\geq m$. As depicted in Figure \ref{fig:psheaf-example_main}, a persistent sheaf can also be viewed as a sheaf of persistence modules. The figure shows parts of the sheaf in \eqref{pers_shv_example} of interval persistence modules on a simplicial complex containing a triangle and its faces where the maps are identity on the overlap of intervals and zero elsewhere:
\begin{figure}
\centering
\includegraphics[scale=0.35]{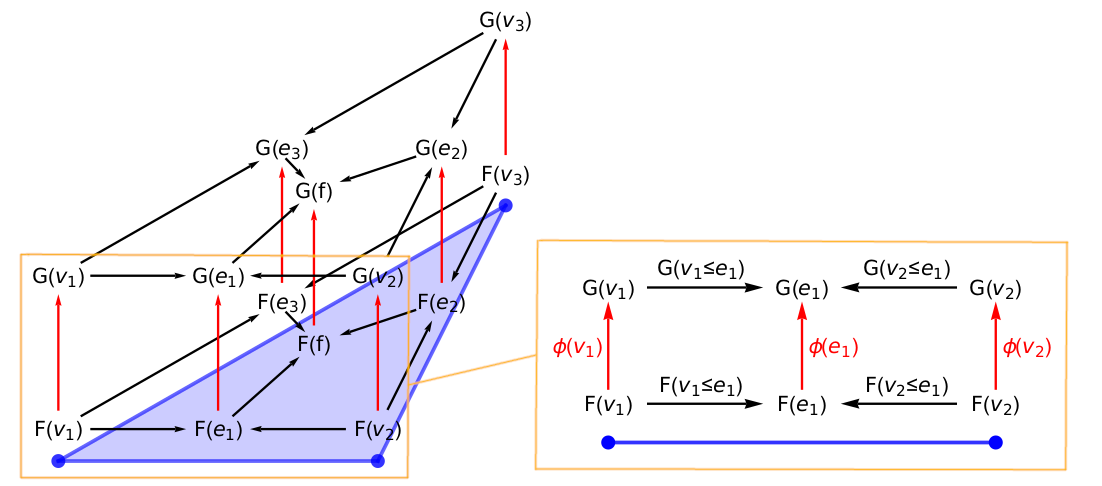}
\caption{A sheaf morphism $\phi\colon F\rightarrow G$ over a triangle and its faces.}
\label{fig:sheaf_example_main}
\end{figure}
\begin{figure}[htb] 
\includegraphics[width=\textwidth]{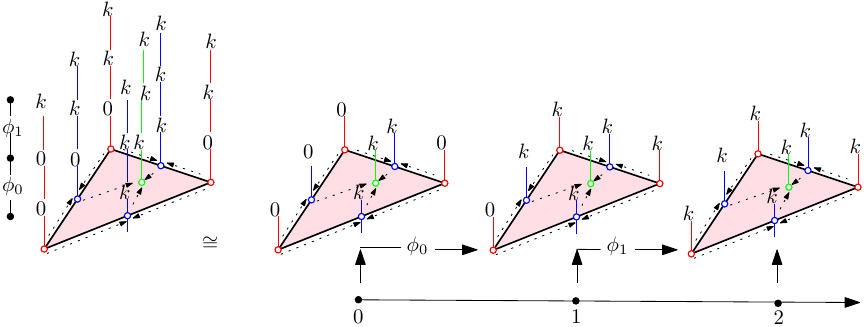}
\caption{Two equivalent viewpoints of a persistent sheaf over a triangle and its faces.}
\label{fig:psheaf-example_main}
\end{figure}
\begin{equation} \label{pers_shv_example}
\begin{tikzcd}
& & \mathbb{I}_{[1,7)} \arrow[dr] \arrow[dl] \\
& \mathbb{I}_{[1,6)} \arrow[r] & \mathbb{I}_{[0,3)} & \mathbb{I}_{[0,5)} \arrow[l] \\
\mathbb{I}_{[2,6)} \arrow[rr] \arrow[ur] & & \mathbb{I}_{[0,5)} \arrow[u] & & \mathbb{I}_{[1,5)} \arrow[ll] \arrow[ul]
\end{tikzcd}
\end{equation}
Our goal is to compute the persistent sheaf cohomology of $\Vec{F}$, i.e.\ the barcode of the following persistence module: 
\begin{equation} \label{persistent_sheaf_cohomology} \small
\begin{tikzcd}
H^k(X,\Vec{F}):\, H^k(K,F_0) \arrow[r,"H^k(K\text{,}\phi_0)"] &[15pt] H^k(K,F_1) \arrow[r,"H^k(K\text{,}\phi_1)"] &[15pt] \cdots \arrow[r,"H^k(K\text{,}\phi_{m\minus 1})"] &[20pt] H^k(K,F_m) \arrow[r,"\cong"] & \cdots
\end{tikzcd}
\end{equation}
\textbf{Algorithm PersistentSheaf:} We first assume that the input for the computation is given by a collection of matrices representing the internal maps $F_i(\sigma\leq\tau)$ of the sheaves for all $0\leq i\leq m$ and $\sigma <_1 \tau\in K$ and a collection of matrices representing the sheaf morphisms $\phi_i(\sigma)$ for all $0\leq i<m$ and $\sigma\in K$. We define the size of the input as $n\coloneqq \sum_{i=0}^m n_i$ where $n_i\coloneqq \sum_{\sigma\in K} \text{dim}\big(F_i(\sigma)\big)$.

From the input we can construct the following complex of persistence modules  
\begin{equation} \label{sheaf_complex_applications}
\begin{tikzcd}
C^{k\minus1}(K,\Vec{F}) \arrow[r,"\delta^{k\minus 1}"] & C^{k}(K,\Vec{F}) \arrow[r,"\delta^{k}"] & C^{k+1}(K,\Vec{F}) 
\end{tikzcd}
\end{equation}
where $C^k(K,\Vec{F})$ is a persistence module of finite type defined by 
\begin{equation*}
\begin{tikzcd}
C^k(K,\Vec{F}):\, C^k(K,F_0) \arrow[r,"C^k(X\text{,}\phi_0)"] &[12pt] C^k(K,F_1) \arrow[r,"C^k(K\text{,}\phi_1)"] &[12pt]  \cdots \arrow[r,"C^k(K\text{,}\phi_{m\minus 1})"] &[15pt]  C^k(K,F_m) \arrow[r,"\cong"] & \cdots
\end{tikzcd}
\end{equation*}
Using the pipeline developed in Sections \ref{sec_presentation_algorithm}, \ref{sec_comp_presentations} and \ref{sec_cohomology_presentations}, we can compute a presentation of \eqref{sheaf_complex_applications}, transform it into a complex of presentations and compute the barcode of the cohomology.

\begin{theorem}
Given a persistent sheaf \eqref{persistent_sheaf} of size $n$, the algorithm \textbf{PersistentSheaf} above computes the persistent sheaf cohomology, i.e.,\ the barcode of \eqref{persistent_sheaf_cohomology}, in $O(n^3)$ time.
\end{theorem}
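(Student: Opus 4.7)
The plan is to verify correctness and the $O(n^3)$ runtime by chaining the three main technical results already proved in the paper: Theorem~\ref{thm:bar-from-matrices} for \textbf{PresPersMod}, Corollary~\ref{cor:preserve-rank} for the conversion into a complex of presentations, and Theorem~\ref{thm:perscohom} for \textbf{PresHom}. First I would unpack the definition: by construction the barcode of \eqref{persistent_sheaf_cohomology} is the barcode of the homology of the complex of persistence modules \eqref{sheaf_complex_applications}. Hence the task reduces to computing the homology of a three-term complex of persistence modules whose data can be read off from the input.

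Concretely, $C^k(K,F_i)=\bigoplus_{\sigma\in K^{(k)}}F_i(\sigma)$; the coboundary matrix $\delta^k_i\colon C^k(K,F_i)\to C^{k+1}(K,F_i)$ is assembled from the restriction maps $F_i(\sigma\leq\tau)$ with the usual simplicial signs, and $C^k(K,\phi_i)$ is block-diagonal in $\sigma$ with blocks $\phi_i(\sigma)$. Naturality of the $\phi_i$ with respect to the face maps makes each $C^k(K,\phi_i)$ a chain map, so $\delta^k$ and $\delta^{k-1}$ are bona fide morphisms of persistence modules and $\delta^k\circ\delta^{k-1}=0$ pointwise in $i$, giving a genuine complex of persistence modules. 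Finite type of $C^k(K,\vec{F})$ follows from the isomorphism assumption on $\phi_i$ for $i\geq m$ in \eqref{persistent_sheaf}, so Theorem~\ref{thm:bar-from-matrices} applies.

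The pipeline then proceeds in three stages. Apply \textbf{PresPersMod} to $\delta^{k-1}$ and $\delta^k$ to obtain canonical presentations of both morphisms; by Theorem~\ref{thm:bar-from-matrices} this costs $O(n^3)$ since $\dim C^k(K,F_i)\leq n_i$ and $n=\sum_i n_i$. Next, the output is a canonical presentation of a complex of persistence modules, not necessarily a complex of presentations; apply the modification of Corollary~\ref{cor:preserve-rank}, which in $O(n^3)$ time produces a reduced complex of presentations of size $O(n)$ presenting the same complex of persistence modules. Finally, feed this into \textbf{PresHom}; by Theorem~\ref{thm:perscohom} it outputs the barcode of $\ker\delta^k/\im\delta^{k-1}=H^k(K,\vec{F})$ in $O(n^3)$ time.

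The main point to verify carefully is that the size bound propagates through the pipeline, so that at every stage the matrices involved remain $O(n)\times O(n)$; this is where I would invest the most attention. This follows by tracking: the input matrices have total size $O(n)$, \textbf{PresPersMod} produces presentations whose ranks of free modules are bounded by the total dimensions of the $C^k(K,F_i)$ and thus by $O(n)$, and the modification of Corollary~\ref{cor:preserve-rank} preserves $O(n)$ size. Summing the three $O(n^3)$ stages yields the claimed runtime and correctness.
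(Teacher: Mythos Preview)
Your proposal is correct and follows exactly the paper's approach: the paper's own proof is a single sentence, ``This follows directly from the results in Section~\ref{sec_presentation_algorithm}, \ref{sec_comp_presentations} and \ref{sec_cohomology_presentations},'' and your argument is precisely the unpacking of that sentence, invoking Theorem~\ref{thm:bar-from-matrices}, Corollary~\ref{cor:preserve-rank}, and Theorem~\ref{thm:perscohom} in order and tracking the $O(n)$ size bound through the pipeline.
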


\begin{proof}
This follows directly from the results in Section \ref{sec_presentation_algorithm}, \ref{sec_comp_presentations} and \ref{sec_cohomology_presentations}.
\end{proof}

\begin{example}
We demonstrate how to compute the cohomology in degree $k=1$ of the persistent sheaf in \eqref{pers_shv_example}. We assume $\field=\mathbb{Z}_2$ and start by constructing a complex of presentations of \eqref{sheaf_complex_applications} for $k=1$. As in \eqref{cochain_complex_def}, we can build $C^k(K,\Vec{F})$ by taking the direct sum of all the persistence modules $\Vec{F}(\sigma)$ over $k$-simplices $\sigma\in K$, i.e., $C^k(K,\Vec{F})\coloneqq\underset{\sigma\in K^k}{\bigoplus}\Vec{F}(\sigma)$. In the case of \eqref{pers_shv_example}, we obtain: 
\begin{equation*}
\begin{tikzcd}
C^0(K,\Vec{F}) \arrow[d,"\delta^0"] &[-15pt] = &[-15pt] \mathbb{I}_{[2,6)} \arrow[d] \arrow[drr] &[-25pt] \oplus &[-25pt] \mathbb{I}_{[1,5)} \arrow[drr] \arrow[dll,crossing over] &[-25pt] \oplus &[-25pt] \mathbb{I}_{[1,7)} \arrow[dll,crossing over] \arrow[d] \\
C^1(K,\Vec{F}) \arrow[d,"\delta^1"] & = & \mathbb{I}_{[0,5)} \arrow[drr] & \oplus & \mathbb{I}_{[1,6)} \arrow[d] & \oplus & \mathbb{I}_{[0,5)} \arrow[dll] \\
C^2(K,\Vec{F}) & = & & & \mathbb{I}_{[0,3)}
\end{tikzcd}
\end{equation*}
where the arrows depict morphisms between interval modules over $K$ which are the identity on the overlap of intervals and zero elsewhere. Note that the structures of the maps $\delta^0$ and $\delta^1$ resemble the coboundary morphisms of a triangle
\begin{equation*} \footnotesize
\delta^0=
\begin{blockarray}{cccc}
& \text{[}2,6) & \text{[}1,5) & \text{[}1,7)  \\
\begin{block}{c(ccc)}
\text{[}0,5) & 1 & 1 & 0 \\
\text{[}0,5) & 0 & 1 & 1 \\
\text{[}1,6) & 1 & 0 & 1 \\
\end{block}
\end{blockarray} 
\hspace{10pt} , \hspace{5pt}
\delta^1=
\begin{blockarray}{cccc}
& \text{[}0,5) & \text{[}0,5) & \text{[}1,6)  \\
\begin{block}{c(ccc)}
\text{[}0,3) & 1 & 1 & 1 \\
\end{block}
\end{blockarray} \hspace{1pt}.
\end{equation*}
Using these observations we can easily build the following complex of canonical presentations:
\begin{equation*}
\begin{tikzcd}[ampersand replacement=\&,every label/.append style = {font = \tiny},column sep=large, row sep=large]
0 \& C^0(K,\Vec{F}) \arrow[l] \arrow[d,swap,"\delta^0"] \&[10pt] P_0 \arrow[l] \arrow{d}[swap,xshift=-2pt,yshift=-3pt]{\begin{blockarray}{c@{\hspace{7pt}}ccc}
& 2 & 1 & 1 \\
\begin{block}{c@{\hspace{7pt}}(ccc)}
0 & 1 & 1 & 0 \\
0 & 0 & 1 & 1 \\
1 & 1 & 0 & 1 \\
\end{block}
\end{blockarray}} \&[20pt] P_1 \arrow{l}[swap,xshift=-2pt,yshift=-7pt]{\begin{blockarray}{c@{\hspace{7pt}}ccc}
& 6 & 5 & 7 \\
\begin{block}{c@{\hspace{7pt}}(ccc)}
2 & 1 & 0 & 0 \\
1 & 0 & 1 & 0 \\
1 & 0 & 0 & 1 \\
\end{block}
\end{blockarray}} \arrow{d}[xshift=-2pt,yshift=-3pt]{\begin{blockarray}{c@{\hspace{7pt}}ccc}
& 6 & 5 & 7 \\
\begin{block}{c@{\hspace{7pt}}(ccc)}
5 & 1 & 1 & 0 \\
5 & 0 & 1 & 1 \\
6 & 1 & 0 & 1 \\
\end{block}
\end{blockarray}} \&[10pt] 0 \arrow[l] \\[7pt]
0 \& C^1(K,\Vec{F}) \arrow[l] \arrow[d,swap,"\delta^1"] \& Q_0 \arrow[l] \arrow{d}[swap,xshift=-2pt,yshift=-3pt]{\begin{blockarray}{c@{\hspace{5pt}}ccc}
& 0 & 0 & 1 \\
\begin{block}{c@{\hspace{5pt}}(ccc)}
0 & 1 & 1 & 1 \\
\end{block}
\end{blockarray}} \& Q_1 \arrow{l}[swap,xshift=-2pt,yshift=-7pt]{\begin{blockarray}{c@{\hspace{7pt}}ccc}
& 5 & 5 & 6 \\
\begin{block}{c@{\hspace{7pt}}(ccc)}
0 & 1 & 0 & 0 \\
0 & 0 & 1 & 0 \\
1 & 0 & 0 & 1 \\
\end{block}
\end{blockarray}} \arrow{d}[xshift=-2pt,yshift=-3pt]{\begin{blockarray}{c@{\hspace{5pt}}ccc}
& 5 & 5 & 6 \\
\begin{block}{c@{\hspace{5pt}}(ccc)}
3 & 1 & 1 & 1 \\
\end{block}
\end{blockarray}} \& 0 \arrow[l] \\[3pt]
0 \& C^2(K,\Vec{F}) \arrow[l] \& R_0 \arrow[l] \& R_1 \arrow{l}[swap,xshift=-2pt,yshift=-7pt]
{\begin{blockarray}{c@{\hspace{4pt}}c}
& 3 \\
\begin{block}{c@{\hspace{4pt}}(c)}
0 & 1  \\
\end{block}
\end{blockarray}}\& 0 \arrow[l]
\end{tikzcd}
\end{equation*}
The cohomology of this complex of presentations is computed in Example \ref{homology_algo_example} in Section \ref{sec_cohomology_presentations}. From there we get $H^1(K,\Vec{F})=\ker\delta^1/\im\delta^0\cong\mathbb{I}_{[0,1)}\oplus \mathbb{I}_{[3,5)}$ for the persistent sheaf cohomology of $\Vec{F}$ over $K$.  
\end{example}

\noindent
In practice, it is unlikely that one is just handed a persistent sheaf in the form described above. In an application the persistent sheaf has to be constructed in the first place. To construct a persistent sheaf $\Vec{F}\colon \mathbb{N}_0\rightarrow \mathbf{Shv}(K,\mathbf{vec})$ or equivalently a sheaf of persistence modules $\Vec{F}\colon K\rightarrow \mathbf{pMod}$, one has to construct a persistence module $\Vec{F}(\sigma)\colon \mathbb{N}_0\rightarrow\mathbf{vec}$ over every simplex $\sigma \in K$ and connect them by a morphism of persistence modules $\Vec{F}(\sigma\leq \tau)$ if $\sigma<_1\tau$. The idea of sheaves is to organize local information over a space. We use this idea to compute presentations $0\leftarrow\Vec{F}(\sigma)\leftarrow P^\sigma_0\xleftarrow{p^\sigma} P^\sigma_1$ of the modules over each simplex and the morphisms $\Vec{F}(\sigma\leq\tau)$ locally in time $O\big(\text{max}\{\sum_{i=0}^m\text{dim}(F_i(\sigma)),\sum_{i=0}^m\text{dim}(F_i(\tau))\}^3\big)$. This can be computed in a distributed manner while constructing the persistent sheaf. From these local presentations we can assemble a presentation of \eqref{sheaf_complex_applications}. After this construction / preprocessing step we can define the size of the presentation as $\overline{n}\coloneqq \sum_{\sigma\in K}\text{rank}(P_0^\sigma)$. The size of $\overline{n}$ depends on the structure of the modules $\Vec{F}(\sigma)$. If all their summands restricted to $[0,m]$ are intervals of length $0$, then $\overline{n}=n$, but, if all their summands are intervals of length $m$, then $\overline{n}=\frac{n}{m}$. The remaining steps in our pipeline take $O(\overline{n}^3)$ time. Therefore, if one can take advantage of distributed computation of the input and local presentations, our approach can be significantly faster than $O(n^3)$.

\subsection{Cohomology of persistent sheaves over posets} \label{sec_poset}

So far our algorithms allow us to compute the persistent sheaf cohomology barcode of persistent sheaves over simplicial complexes. Discrete sheaves and their persistent version can be defined more generally on finite posets. The problem is that for arbitrary finite posets there is no direct analog to the explicit (co)chain complex (of polynomial size) computing the sheaf cohomology over a simplicial complex. For arbitrary posets one could use the general definition of sheaf cohomology via derived functors (see \cite{brown2022discrete} for the non-persistent case). We now show that we can reduce the computation of persistent sheaf cohomology over a finite poset to a computation over a simplicial complex. Let $X$ be a finite poset and $\Vec{F}\colon \mathbb{N}_0\rightarrow \mathbf{Shv}(X,\mathbf{vec})$ a persistent sheaf. The order complex $\mathcal{K}(X)$ is the simplicial complex defined by all totally ordered subsets of $X$, i.e.\ $\mathcal{K}(X)\coloneqq\{\{x_0,\ldots,x_k\}\vert x_0<\ldots<x_k\in X)\}$. It comes with a surjective projection morphism $f\colon\mathcal{K}(X)\rightarrow X$ defined by $f(x_0<\ldots<x_k)\coloneqq x_k$. We can now pull back $\Vec{F}$ along this projection $f$ to a persistent sheaf on $\mathcal{K}(X)$. Given $x_\bullet=(x_0<\cdots <x_k)\leq (y_0<\cdots <y_l)=y_\bullet\in\mathcal{K}(X)$ we define this pullback by $f^*\Vec{F}(x_\bullet)\coloneqq \Vec{F}(x_k)$ and $f^*\Vec{F}(x_\bullet\leq y_\bullet)\coloneqq \Vec{F}(x_k\leq y_l)$.  The following theorem shows that the persistent sheaf cohomology of $\Vec{F}$ on the poset $X$ is isomorphic to the persistent sheaf cohomology of $f^*\Vec{F}$ on the simplicial complex $\mathcal{K}(X)$.

\begin{theorem}
If $X$ is a finite poset, $\Vec{F}$ a persistent sheaf on $X$ and $f\colon \mathcal{K}(X)\rightarrow X$ the projection from the order complex, then $H^k(X,\Vec{F})\cong H^k(\mathcal{K}(X),f^*\Vec{F})$.
\label{thm:poset}
\end{theorem}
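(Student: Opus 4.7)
The plan is to reduce the claim to a pointwise, natural isomorphism and then lift it to persistence modules by naturality. A persistent sheaf $\Vec{F}\colon \mathbb{N}_0\to \mathbf{Shv}(X,\mathbf{vec})$ gives, for each $i$, a sheaf $F_i$ on $X$, and its $k$-th persistent cohomology is the persistence module $i\mapsto H^k(X,F_i)$ with structure maps $H^k(X,\phi_i)$. It suffices to construct an isomorphism $\Psi_F\colon H^k(X,F)\xrightarrow{\cong}H^k(\mathcal{K}(X),f^*F)$ that is natural in $F$; applying $\Psi$ at every index and using naturality against the morphisms $\phi_i$ automatically produces the required isomorphism of persistence modules.

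For the pointwise statement my preferred route is a direct identification of cochain complexes. Sheaf cohomology on the finite poset $X$ (defined via derived functors, i.e.\ as $R^k\Gamma(X,F)$) can be computed by a normalized cobar/Roos complex whose $k$-th term is $\bigoplus_{x_0<\cdots<x_k}F(x_k)$, with differential given by the alternating sum of face-deletion maps, where deleting any index except the last is the identity on $F(x_k)$ and deleting the last uses $F(x_{k-1}\le x_k)$. On the simplicial side, the cellular cochain complex of $\mathcal{K}(X)$ with coefficients in $f^*F$ has $k$-th term $\bigoplus_{\sigma\in\mathcal{K}(X)^k}f^*F(\sigma)$, and since $\mathcal{K}(X)^k$ is precisely the set of strict chains $x_0<\cdots<x_k$ and $f^*F(x_\bullet)=F(\max x_\bullet)=F(x_k)$, the two complexes have the same terms. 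Matching coboundaries: a codimension-one face of $(y_0<\cdots<y_{k+1})$ obtained by omitting $y_i$ with $i<k+1$ still has maximum $y_{k+1}$, so $f^*F(\sigma_i\le\tau)=\mathrm{id}_{F(y_{k+1})}$, while omitting $y_{k+1}$ gives a face with maximum $y_k$, producing $F(y_k\le y_{k+1})$. This agrees term by term with the cobar differential, so the two complexes are isomorphic and hence so are their cohomologies.

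Since both the Roos complex and the simplicial cochain complex are functorial in the input sheaf, the isomorphism $\Psi_F$ is natural in $F$. In particular, for every $\phi_i\colon F_i\to F_{i+1}$ the square $\Psi_{F_{i+1}}\circ H^k(X,\phi_i)=H^k(\mathcal{K}(X),f^*\phi_i)\circ\Psi_{F_i}$ commutes, so $\{\Psi_{F_i}\}_{i\in\mathbb{N}_0}$ assembles into an isomorphism of persistence modules $H^k(X,\Vec{F})\cong H^k(\mathcal{K}(X),f^*\Vec{F})$.

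The main obstacle I anticipate is the precise identification of sheaf cohomology on an arbitrary finite poset with the Roos complex above, together with careful tracking of signs and face orderings. An alternative that sidesteps this is a Leray-spectral-sequence argument for $f\colon\mathcal{K}(X)\to X$: because $f$ selects the maximum of a chain, the preimage of the minimal open neighborhood of any $x\in X$ is an order complex having $\{x\}$ as a cone point and is therefore contractible, which forces $Rf_*f^*F\simeq F$ and collapses the spectral sequence to $H^k(\mathcal{K}(X),f^*F)\cong H^k(X,F)$; naturality in $F$ is again built in. Either route yields the pointwise isomorphism, and the persistence-module statement follows formally.
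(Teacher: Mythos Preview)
Your main argument is correct and takes a genuinely different route from the paper. The paper proceeds via the adjunction $f^*\dashv f_*$: it first shows that $f^*$ is fully faithful, so the unit $\eta\colon\mathrm{id}\to f_*f^*$ is an isomorphism; then, given an injective resolution $0\to F\to I^\bullet$ on $X$, it uses $\eta$ to identify $\Gamma I^\bullet\cong\Gamma f_*f^*I^\bullet=\Gamma f^*I^\bullet$ and invokes (via \cite{brown2022discrete}) that $f^*I^\bullet$ is an acyclic resolution of $f^*F$ to obtain the induced isomorphism $H^k(f)\colon H^k(X,F)\to H^k(\mathcal{K}(X),f^*F)$. The passage to persistence modules is then handled exactly as you propose, by naturality and commuting squares indexed by $i$. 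Your approach instead identifies the two cohomologies at the cochain level: the normalized Roos complex computing $R^k\Gamma$ on a finite poset has the same terms and the same differential as the simplicial cochain complex $C^\bullet(\mathcal{K}(X),f^*F)$, and this identification is visibly natural in $F$. This is more explicit and avoids injective resolutions, at the cost of importing the (standard but not established in the paper) fact that the Roos complex computes derived-functor sheaf cohomology on a finite poset; the paper's route stays within the derived-functor formalism it already uses and yields the canonical comparison map $H^k(f)$ rather than an ad hoc chain-level identification.

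One caution on your Leray alternative: with the paper's conventions (opens are up-sets, $f$ is the last-vertex map) the preimage $f^{-1}(U_x)=\{\sigma\in\mathcal{K}(X):\max\sigma\ge x\}$ is generally \emph{not} a cone on the vertex $x$, since a chain $(x_0<\cdots<x_k)$ with $x_k\ge x$ may have lower terms incomparable to $x$. For instance, take $X=\{a,b,c\}$ with $a<c$, $b<c$, and $x=a$: the chain $(b<c)$ lies in $f^{-1}(U_a)$ but cannot be coned with $a$. The spectral-sequence route can still be made to work, but not via that particular contractibility claim; your primary Roos-complex argument is the cleaner path and needs no repair.
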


\begin{proof}
See Appendix \ref{app_order_complex}.
\end{proof}

\noindent
We are now able to compute the persistent sheaf cohomology over any finite poset by pulling back to the order complex and using our pipeline for simplicial complexes. Unfortunately this approach is only practical for small instances since the size of the order complex is exponential in the size of the poset. This raises the question if it is possible to compute the persistent sheaf cohomology over a general poset in polynomial time. In the following we give an example of a class of posets, which can have exponentially large order complexes, where we can compute the persistent cohomology in polynomial time. 

Let $X$ be a zigzag-poset, i.e. a poset with the following Hasse diagram
\begin{equation} \label{hasse}
\begin{tikzcd}
x_0 \arrow[r] & \arrow[l] x_1 \arrow[r] & \arrow[l] \cdots \arrow[r] & \arrow[l] x_{n\minus 1} \arrow[r] & \arrow[l] x_{n}
\end{tikzcd}
\end{equation}
where each arrow can point to the left or to the right. Let $\vec{F}$ be a persistent sheaf on $X$. We now construct a poset $X'$ in the following way: Suppose the points in $X$ are ordered from left to right as in \eqref{hasse}. Let $x_{i_0}$ be the minimal element in $X$ with the smallest index $i_0\in\{0,\ldots,n\}$. Let $x_{i_1}$ be the maximal element of $X$ with minimal index $i_1>i_0$. Let $x_{i_2}$ be the minimal element of $X$ with minimal index $i_2>i_1$. We proceed in this way until we reach the minimal element $x_{i_m}$ with maximal index $i_m$ and define $X'\coloneqq \{x_{i_0},\ldots,x_{i_m}\}$. The constructed zigzag-poset $X'$ is an alternating zigzag poset of the form:
\begin{equation*}
\begin{tikzcd}
& x_{i_1} & & \cdots & & x_{i_{m\minus 1}} \\
x_{i_0} \arrow[ur] & & x_{i_2} \arrow[ul] \arrow[ur] &  & x_{i_{m\minus 2}} \arrow[ur] \arrow[ul] & & x_{i_m} \arrow[ul] 
\end{tikzcd}
\end{equation*}
and is a subposet of $X$, i.e.\ there is an order-preserving inclusion $\iota\colon X'\xhookrightarrow{} X$. We can pull back the persistent sheaf $\Vec{F}$ along this inclusion to obtain a persistent sheaf $\iota^*\Vec{F}$ on $X'$. Since $X'$ is the poset of a one-dimensional simplicial complex, our algorithms are applicable to $\iota^*\Vec{F}$. Now, we can show that the cohomology of $\Vec{F}$ on $X$ agrees with the cohomology of $\iota^*\Vec{F}$ on $X'$.

\begin{theorem} \label{zigzag_reduction_thm}
Let $X$ be a zigzag-poset and $\iota\colon X'\xhookrightarrow{} X$ the inclusion of the alternating subposet. If $\Vec{F}$ is a persistent sheaf on $X$, then $ H^k(X,\Vec{F})\cong H^k(X',\iota^*\Vec{F})$. 
\end{theorem}

\begin{proof}
See Appendix \ref{app_zigzag_simplification}.
\end{proof}


\section{Conclusion} \label{sec_conclusion}
We developed efficient general purpose algorithms to compute the (co)homology of complexes of persistence modules. Our methods can deal with complexes of persistence modules given in raw form by matrices and by complexes of their presentations. They can be used to efficiently compute the persistent homology of towers, the persistent cosheaf homology over towers and the persistent sheaf cohomology over simplicial complexes. We also showed that we can use our pipeline to compute the cohomology of persistent sheaves over arbitrary finite posets by reducing it to a computation over simplicial complexes. Unfortunately, this reduction leads to an exponential time algorithm in general. For the class of zigzag posets we proved that we can compute the persistent cohomology in polynomial time. This result demonstrates that we can reduce persistent sheaves over some posets to ones over simplicial complexes of polynomial size in a way preserving the persistent cohomology. This leaves us with the open question: What is the class of posets for which the persistent sheaf cohomology can be computed in polynomial time and is the computation NP-hard for arbitrary posets?       

\bibliography{lib}

\appendix

\section{Sheaves and their persistent version} \label{app_sheaves}

A sheaf is a structure that organizes information over a topological space. For example, a sheaf of vector spaces over a topological space $X$ is a collection of vector spaces and linear maps parameterized by the open sets of $X$. In this paper we will only work with sheaves in the discrete setting of finite posets. The theory of sheaves over posets is connected to the general theory of sheaves on topological spaces by the Alexandrov topology on posets \cite[Theorem 4.2.10]{curry}. In the following we will also view a poset $X$ as a category with a unique morphism $\sigma\rightarrow \tau$ if and only if $\sigma\leq \tau\in X$.  

\begin{definition}[Sheaves and sheaf morphisms over poset]
A sheaf $F$ on a finite poset $X$ with values in a category $\mathbf{C}$ is a functor $F\colon X\rightarrow \mathbf{C}$. In other words, $F$ is an assignment of an object $F(\sigma)\in\mathbf{C}$ to every element $\sigma\in X$ and an assignment of a morphism $F(\sigma)\xrightarrow{F(\sigma\leq\tau)}F(\tau)$ to every relation $\sigma\leq \tau\in X$.

A morphism of sheaves $\phi\colon F\rightarrow G$ on $X$ is a natural transformation of the functors $F$ and $G$, i.e.\ a collection of morphisms $\phi(\sigma)\colon F(\sigma)\rightarrow G(\sigma)$ such that $G(\sigma\leq\tau)\circ \phi(\sigma)=\phi(\tau)\circ F(\sigma\leq\tau)$ for all $\sigma\leq\tau\in X$.
\end{definition}

\noindent
Our main example will be sheaves over finite abstract simplicial complexes where a simplicial complex $K$ is viewed as a poset with $\sigma\leq\tau\in K$ whenever $\sigma$ is a face of $\tau$. Figure \ref{fig:sheaf_example_main} shows an example of sheaves (black) on a complex comprising a triangle and its faces and a sheaf morphism (red). Sheaves over $X$ with values in $\mathbf{C}$ and their morphisms form a category which we denote by $\mathbf{Shv}\big(X,\mathbf{C}\big)$. In the following, we will work with the categories of sheaves $\mathbf{Shv}\big(X,\mathbf{vec}\big)$ of finite-dimensional vector spaces $\mathbf{vec}$ or sheaves $\mathbf{Shv}\big(X,\mathbf{grMod}_{\field[t]}\big)$ of finitely generated graded modules $\mathbf{grMod}_{\field[t]}$ over the polynomial ring $\field[t]$.

Sheaves with values in a suitable category $\mathbf{C}$ have a notion of cohomology, i.e., for every $k\geq 0$, there is a cohomology functor $H^k(X,-)\colon\mathbf{Shv}\big(X,\mathbf{C}\big)\rightarrow \mathbf{C}$ defined by $F\mapsto H^k(X,F)$ and $(F\xrightarrow{\phi}G)\mapsto \big(H^k(X,F)\xrightarrow{H^k(X,\phi)}H^k(X,G)\big)$. If $K$ is a simplicial complex, the cohomology of $F$ can be computed by a cochain complex similiar to the one computing simplicial cohomology \cite{curry}. In the following, we denote by $K^k$ the set of $k$-simplices of $K$ and by $[\sigma:\tau]$ a signed incidence relation on $K$. Such an incidence relation can be defined as follows. Choose an arbitrary ordering of the vertex set $K^0=(v_0,\ldots,v_d)$. A simplex $\tau\in K$ can be expressed as $\tau=(v_{i_1},\ldots , v_{i_t})$ such that $i_j<i_{j+1}$. If $\sigma$ is a codimension 1 face of $\tau$, denoted by $\sigma <_1 \tau$, then $\sigma=(v_{i_1},\ldots,v_{i_{j\minus 1}},v_{i_{j+1}},\ldots,v_{i_t})$. Now we define
\begin{equation*}
[\sigma:\tau]\coloneqq \begin{cases} (\minus 1)^{j} \hspace{5pt} \text{if }\sigma<_1 \tau \\
0 \hspace{23pt} \text{else}\end{cases} .
\end{equation*}
The cohomology $H^k(K,F)$ of a sheaf of finite-dimensional vector spaces $F$ on a finite simplicial complex $K$ is given by the cohomology of the following cochain complex
\begin{equation} \label{cochain_complex}
\begin{tikzcd}
0\arrow[r] & C^0(K,F) \arrow[r,"\delta^0"] &  C^1(K,F) \arrow[r,"\delta^1"] &  C^2(K,F) \arrow[r,"\delta^2"] & \cdots
\end{tikzcd}
\end{equation}
where
\begin{equation} \label{cochain_complex_def}
\begin{aligned}
C^k(K,F)&\coloneqq\bigoplus_{\sigma\in K^k} F(\sigma) \\
\delta^k_{\sigma,\tau}&\coloneqq [\sigma:\tau]F(\sigma\leq\tau) 
\end{aligned}
\end{equation}
and $\delta^k_{\sigma,\tau}$ denotes the block of the block-matrix $\delta^k\colon \underset{\sigma\in K^k}{\bigoplus}F(\sigma)\rightarrow\underset{\tau\in K^{k+1}}{\bigoplus}F(\tau)$ corresponding to the simplices $\sigma$ and $\tau$. Moreover, the morphism $H^k(K,\phi)\colon H^k(K,F)\rightarrow H^k(K,G)$ induced in cohomology by a sheaf morphism $\phi\colon F\rightarrow G$ is the morphism induced by the following morphism of cochain complexes
\begin{equation} \label{shv_cochain_morphism}
\begin{tikzcd}
0\arrow[r] & C^0(K,F) \arrow[r,"\delta^0"] \arrow[d,"C^0(K\text{,}\phi)"] &  C^1(K,F) \arrow[r,"\delta^1"] \arrow[d,"C^1(K\text{,}\phi)"] &  C^2(K,F) \arrow[r,"\delta^2"] \arrow[d,"C^2(K\text{,}\phi)"] & \cdots \\
0\arrow[r] & C^0(K,G) \arrow[r,"\delta^0"] &  C^1(K,G) \arrow[r,"\delta^1"] &  C^2(K,G) \arrow[r,"\delta^2"] & \cdots
\end{tikzcd}
\end{equation}
where $C^k(K,\phi)$ is the block matrix defined by
\begin{equation*}
C_k(K,\phi)_{\sigma\tau}\coloneqq
\begin{cases}
\phi(\sigma) \hspace{5pt} \text{if} \hspace{5pt} \sigma=\tau \\
0 \hspace{23pt} \text{else}
\end{cases}
\end{equation*}
for all $\sigma,\tau\in K^k$.

We now come to persistent sheaves. In topological data analysis we study the evolution of topological spaces under variation of a parameter. We can think of them as persistence modules of topological spaces or persistent spaces. A persistent sheaf describes the evolution of algebraic information over a fixed space $X$. 

\begin{definition}[Persistent sheaf]
A \emph{persistent sheaf} is a functor
\begin{equation*}
\Vec{F}\colon \mathbb{N}_0\rightarrow \mathbf{Shv}\big(X,\mathbf{vec}\big) \hspace{2pt}.
\end{equation*}
We call $\Vec{F}$ of \emph{finite type} if $\exists m\in \mathbb{N}_0$ so that $\Vec{F}(i\leq j)$ is an isomorphism $\forall m\leq i\leq j$. 
\end{definition}

\noindent 
The following theorem shows that a persistent sheaf (Figure \ref{fig_pers_shv} left) can also be viewed as a sheaf of graded modules (Figure \ref{fig_pers_shv} right), or by Proposition \ref{equivalence_persmod}, a sheaf of persistence modules (Figure \ref{fig:psheaf-example_main}). The significance of this theorem is that, as illustrated in Figure \ref{fig_pers_shv}, it allows for a compressed representation of a persistent sheaf.

\begin{theorem}\cite[Theorem 3.6]{russold} \label{equ_shvgr_shvper}
A persistent sheaf $\Vec{F}\colon \mathbb{N}_0\rightarrow \mathbf{Shv}\big(X,\mathbf{vec}\big)$ of finite type corresponds to a sheaf of finitely generated $\mathbb{N}_0$-graded $\field[t]$-modules $\mathcal{M}\Vec{F}$. More precisely, if $X$ is a finite poset, there is an equivalence of categories
\begin{equation*}
\mathcal{M}\colon\mathbf{Fun}_{ft}\Big(\mathbb{N}_0,\mathbf{Shv}\big(X,\mathbf{vec}\big) \Big) \xlongrightarrow{\cong} \mathbf{Shv}\big(X,\mathbf{grMod}_{\field[t]}\big) \hspace{2pt}.
\end{equation*}
\end{theorem}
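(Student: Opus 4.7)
The plan is to build the equivalence by currying the bifunctor underlying a persistent sheaf and then applying the pointwise equivalence of Proposition \ref{equivalence_persmod}. Concretely, since $\mathbf{Shv}(X,\mathbf{vec})$ is by definition the functor category $\mathbf{Fun}(X,\mathbf{vec})$, a persistent sheaf
\[
\Vec{F}\colon \mathbb{N}_0 \longrightarrow \mathbf{Fun}(X,\mathbf{vec})
\]
is the same data as a functor $\widetilde{\Vec{F}}\colon \mathbb{N}_0 \times X \to \mathbf{vec}$. This in turn can be recurried as a functor $X \to \mathbf{Fun}(\mathbb{N}_0,\mathbf{vec}) = \mathbf{Shv}(X,\mathbf{pMod})$, sending $\sigma\mapsto\Vec{F}(-)(\sigma)$. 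The assignments in both directions are strictly inverse on objects and morphisms (natural transformations get transposed accordingly), so they give an isomorphism of categories
\[
\mathbf{Fun}\!\left(\mathbb{N}_0,\mathbf{Shv}(X,\mathbf{vec})\right) \;\xrightarrow{\;\cong\;}\; \mathbf{Shv}(X,\mathbf{pMod}).
\]

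Next I would restrict the above isomorphism to the finite-type subcategory on the left. If $\Vec{F}$ is of finite type with bound $m$, then for every $\sigma\in X$ the persistence module $\Vec{F}(-)(\sigma)$ is of finite type with the same bound, since $\Vec{F}(i\leq j)(\sigma)$ is an isomorphism whenever $\Vec{F}(i\leq j)$ is. Conversely, given a sheaf of persistence modules $G\colon X\to\mathbf{pMod}$ such that every $G(\sigma)$ is of finite type with bound $m_\sigma$, finiteness of $X$ lets us take $m \coloneqq \max_{\sigma\in X} m_\sigma$ and conclude that the corresponding persistent sheaf is of finite type with bound $m$. Hence the isomorphism above restricts to one between $\mathbf{Fun}_{ft}(\mathbb{N}_0,\mathbf{Shv}(X,\mathbf{vec}))$ and $\mathbf{Shv}(X,\mathbf{pMod})$, where the latter is implicitly the full subcategory of sheaves landing in finite-type persistence modules.

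Finally I would apply Proposition \ref{equivalence_persmod} pointwise. The equivalence $\mathbf{pMod}\cong\mathbf{grMod}_{\field[t]}$ is given by an exact functor which sends finite-type persistence modules to finitely generated graded modules; postcomposition with it induces a functor
\[
\mathcal{M}\colon \mathbf{Shv}(X,\mathbf{pMod}) \longrightarrow \mathbf{Shv}(X,\mathbf{grMod}_{\field[t]}),\qquad G\mapsto \mathcal{M}\circ G,
\]
whose inverse is postcomposition with the inverse equivalence. Since $X$ is finite, a sheaf $H\colon X\to \mathbf{grMod}_{\field[t]}$ in the image has all values $H(\sigma)$ finitely generated, matching finite-generation on the target side. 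Composing this with the currying isomorphism of the previous paragraph yields the claimed equivalence $\mathcal{M}$.

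The only delicate point is the compatibility of the finite-type condition with finite generation along both directions of the pointwise equivalence; this relies on finiteness of $X$, which the theorem assumes. Beyond that the argument is formal: the two nontrivial ingredients are the exponential law for functor categories and the already-cited equivalence $\mathbf{pMod}\cong\mathbf{grMod}_{\field[t]}$, both of which preserve and reflect natural transformations, so functoriality and naturality of $\mathcal{M}$ follow automatically.
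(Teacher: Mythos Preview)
The paper does not supply its own proof of this theorem; it is quoted verbatim as \cite[Theorem 3.6]{russold} and used as a black box. Your argument---curry $\mathbf{Fun}(\mathbb{N}_0,\mathbf{Fun}(X,\mathbf{vec}))\cong \mathbf{Fun}(X,\mathbf{Fun}(\mathbb{N}_0,\mathbf{vec}))$, check that the finite-type condition on the left matches pointwise finite type on the right using finiteness of $X$, and then postcompose with the equivalence of Proposition~\ref{equivalence_persmod}---is correct and is exactly the standard route to this kind of statement, so there is nothing to compare against here.
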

When $X$ is a single point, $\mathbf{Shv}(X,\mathbf{C})\cong\mathbf{C}$. Thus persistent sheaves over a point are equivalent to persistence modules. Hence, Proposition~\ref{equivalence_persmod} is the special case of Theorem \ref{equ_shvgr_shvper} when $X$ is a single point.
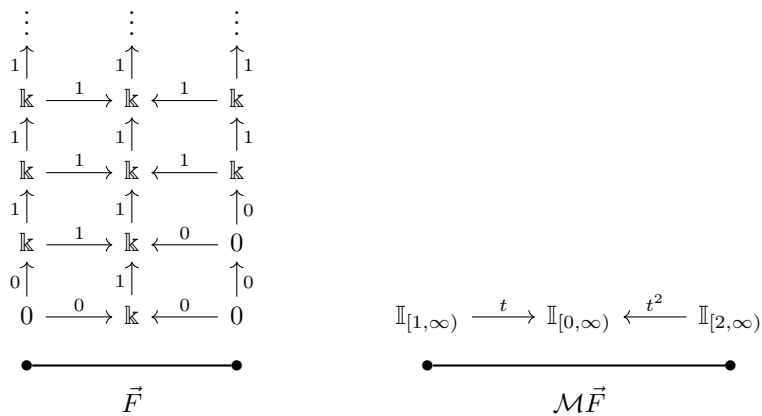
\begin{figure}
\begin{tikzcd}
\vdots & \vdots & \vdots \\[-5pt]
\field \arrow[r,"1"] \arrow[u,"1"{yshift=-1pt}] & \field \arrow[u,"1"{yshift=-1pt}] & \arrow[l,swap,"1"] \field \arrow[u,swap,"1"{yshift=-1pt}] \\[-5pt]
\field \arrow[r,"1"] \arrow[u,"1"{yshift=-1pt}] & \field \arrow[u,"1"{yshift=-1pt}] & \arrow[l,swap,"1"] \field \arrow[u,swap,"1"{yshift=-1pt}] \\[-5pt]
\field \arrow[r,"1"] \arrow[u,"1"{yshift=-1pt}] & \field \arrow[u,"1"{yshift=-1pt}] & \arrow[l,swap,"0"] 0 \arrow[u,swap,"0"{yshift=-1pt}] \\[-5pt]
0 \arrow[r,"0"] \arrow[u,"0"{yshift=-1pt}] & \field \arrow[u,"1"{yshift=-1pt}] & \arrow[l,swap,"0"] 0 \arrow[u,swap,"0"{yshift=-1pt}] & & \mathbb{I}_{[1,\infty)} \arrow[r,"t"] & \mathbb{I}_{[0,\infty)} & \mathbb{I}_{[2,\infty)} \arrow[l,swap,"t^2"] \\[-15pt]
\bullet \arrow[rr,thick,dash,shorten=-5pt] & & \bullet & & \bullet \arrow[rr,thick,dash,,shorten=-5pt] & & \bullet \\[-20pt]
& \Vec{F} & & & & \mathcal{M}\Vec{F}
\end{tikzcd}
\caption{A persistent sheaf $\Vec{F}$ and the corresponding sheaf of graded $\field[t]$-modules $\mathcal{M}\Vec{F}$.}
\label{fig_pers_shv}
\end{figure}

Our goal is to compute the persistent cohomology of $\Vec{F}$, i.e.\ the barcode of the persistence module $H^k(X,\Vec{F})\coloneqq H^k(X,-)\circ\Vec{F}\colon \N\rightarrow \mathbf{vec}$ obtained by composing $\Vec{F}$ with the sheaf cohomology functor $H^k(X,-)\colon \mathbf{Shv}(X,\mathbf{vec})\rightarrow \mathbf{vec}$. 

If $K$ is a finite simplicial complex, the sheaf cohomology persistence module $H^k(K,\Vec{F})$ can be computed by pointwise computation of all cohomology vector spaces $H^k\big(K,\Vec{F}(i)\big)$ and induced morphisms $H^k\big(K,\Vec{F}(i\leq i+1)\big)$ from the cochain complexes given in~\eqref{shv_cochain_morphism}.  
The following theorem suggests an alternative method of computation. 

\begin{theorem}\cite[Theorem 3.8]{russold} \label{russo-thm}
The persistent cohomology of $\Vec{F}$ corresponds to the cohomology of the sheaf of graded $\field[t]$-modules $\mathcal{M}\Vec{F}$, i.e.,\
\begin{equation*}
\mathcal{M}H^k(X,\Vec{F})\cong H^k(X,\mathcal{M}\Vec{F}).
\end{equation*}
\end{theorem}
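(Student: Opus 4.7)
The goal is to establish a natural isomorphism $\mathcal{M}H^k(X,\Vec{F})\cong H^k(X,\mathcal{M}\Vec{F})$, i.e.\ that the equivalence $\mathcal{M}$ commutes with sheaf cohomology. My plan is to first reduce the statement to the simplicial complex case, and then exploit the exactness and direct-sum compatibility of $\mathcal{M}$ to push it through the explicit cochain complex \eqref{cochain_complex}.

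First I would observe that the functor $\mathcal{M}$ of Theorem \ref{equ_shvgr_shvper} is built pointwise from the equivalence of abelian categories $\mathbf{pMod}\cong \mathbf{grMod}_{\field[t]}$ (Proposition \ref{equivalence_persmod}): on stalks, $\mathcal{M}\Vec{F}(\sigma)$ is the graded module $\bigoplus_{i\in\N} \Vec{F}(i)(\sigma)$ with $t$-action given by the internal maps of the persistence module $\Vec{F}(-)(\sigma)$, and on restriction maps $\sigma\leq\tau$ it is the corresponding morphism of graded modules. Being an equivalence of abelian categories it is exact (it preserves kernels, cokernels, and images) and it commutes with finite direct sums. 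These two properties are the only formal ingredients we need.

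Next I would reduce to the case where $X$ is a finite simplicial complex $K$. For a general finite poset $X$, sheaf cohomology of $\Vec{F}$ and of $\mathcal{M}\Vec{F}$ can be defined via the pullback to the order complex as in Section \ref{sec_poset}: $H^k(X,\Vec{F})\cong H^k(\mathcal{K}(X),f^*\Vec{F})$ (Theorem \ref{thm:poset}) and the analogous identity for $\mathcal{M}\Vec{F}$ (which can be proved by the same argument, as the proof of Theorem \ref{thm:poset} only uses abelian-category formalism that applies verbatim to sheaves of graded modules). Moreover, pullback commutes with $\mathcal{M}$ since $\mathcal{M}$ is defined stalkwise and the pullback is stalkwise. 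Thus it suffices to establish the statement when $X=K$ is a simplicial complex.

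In the simplicial complex case, apply $\mathcal{M}$ termwise to the persistent cochain complex \eqref{cochain_complex}. Because $\mathcal{M}$ commutes with finite direct sums, we obtain
\begin{equation*}
\mathcal{M}C^k(K,\Vec{F})=\mathcal{M}\Bigl(\bigoplus_{\sigma\in K^k}\Vec{F}(\sigma)\Bigr)\cong \bigoplus_{\sigma\in K^k}\mathcal{M}\Vec{F}(\sigma)=C^k(K,\mathcal{M}\Vec{F}),
\end{equation*}
and under these identifications the image $\mathcal{M}\delta^k$ of the coboundary in persistent sheaves agrees blockwise with the coboundary of $\mathcal{M}\Vec{F}$ because both are assembled from the same incidence signs $[\sigma:\tau]$ together with the restriction maps (which correspond to each other under $\mathcal{M}$). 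Hence $\mathcal{M}$ maps the cochain complex of $\Vec{F}$ isomorphically to the cochain complex of $\mathcal{M}\Vec{F}$. Since $\mathcal{M}$ is exact, it commutes with $\ker$ and $\operatorname{im}$, so $\mathcal{M}(\ker\delta^k/\operatorname{im}\delta^{k-1})\cong \ker(\mathcal{M}\delta^k)/\operatorname{im}(\mathcal{M}\delta^{k-1})$, which yields the claimed natural isomorphism.

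The main obstacle I expect is bookkeeping rather than conceptual: one must verify carefully that the naturality of $\mathcal{M}$ turns the componentwise restriction maps of $\Vec{F}$ into those of $\mathcal{M}\Vec{F}$ with the same incidence signs, so that the cochain complexes truly correspond under $\mathcal{M}$ (and not merely are abstractly isomorphic term by term). Once this coherence is checked, exactness of $\mathcal{M}$ does the rest. For general posets the same argument works provided one fixes a definition of sheaf cohomology—using the order complex reduction as above is the cleanest choice and avoids invoking derived functors.
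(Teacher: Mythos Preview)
The paper does not prove this theorem; it is quoted verbatim with the citation \cite[Theorem 3.8]{russold} and no argument is supplied in the present paper. There is therefore no proof here to compare your proposal against.

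That said, your sketch is essentially correct. For a simplicial complex the argument is exactly as you say: $\mathcal{M}$ is an equivalence of abelian categories, hence exact and additive, so it carries the cochain complex $C^\bullet(K,\Vec{F})=\bigoplus_{\sigma\in K^\bullet}\Vec{F}(\sigma)$ with its sign-weighted restriction maps to $C^\bullet(K,\mathcal{M}\Vec{F})$ and commutes with taking cohomology. For a general finite poset, rather than routing through Theorem~\ref{thm:poset} (whose proof in this paper already cites \cite{russold} and so risks an appearance of circularity), a cleaner argument is purely derived-functorial: an equivalence of abelian categories preserves limits, so $\mathcal{M}\Gamma\cong\Gamma\mathcal{M}$, and it preserves injectives, so it sends injective resolutions to injective resolutions; hence $\mathcal{M}$ commutes with $R^k\Gamma=H^k(X,-)$ directly, with no detour through the order complex.
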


\noindent
Over a simplicial complex, the cohomology of $\mathcal{M}\Vec{F}$ can be computed from a cochain complex in the same way as the cohomology of a sheaf of vector spaces by replacing the sheaf of vector spaces $F$ in~\eqref{cochain_complex} and \eqref{cochain_complex_def} by the sheaf of graded modules $\mathcal{M}\Vec{F}$ \cite{russold}. In Section~\ref{sec_pers_shv_simp} we present an algorithm motivated by this viewpoint.


\section{Cosheaves over persistent spaces} \label{app_cosheaves}

The notion of a cosheaf is dual to the one of a sheaf. Over finite posets they are equivalent \cite[Definition 6.2.8]{curry}. Every result for sheaves over finite posets can be translated to cosheaves by dualization. 

\begin{definition}[Cosheaves and cosheaf morphisms over poset]
A cosheaf $F$ on a finite poset $X$ with values in a category $\mathbf{C}$ is a functor $F\colon X^\text{op}\rightarrow \mathbf{C}$. In other words, $F$ is an assignment of an object $F(\sigma)\in\mathbf{C}$ to every element $\sigma\in X$ and an assignment of a morphism $F(\tau)\xrightarrow{F(\sigma\leq\tau)}F(\sigma)$ to every relation $\sigma\leq \tau\in X$.

A morphism of cosheaves $\phi\colon F\rightarrow G$ on $X$ is a natural transformation of the functors $F$ and $G$, i.e.\ a collection of morphisms $\phi(\sigma)\colon F(\sigma)\rightarrow G(\sigma)$ such that $G(\sigma\leq\tau)\circ \phi(\tau)=\phi(\sigma)\circ F(\sigma\leq\tau)$ for all $\sigma\leq\tau\in X$.
\end{definition}

\noindent
In this section we want to study the behavior of a "fixed" cosheaf over a varying space. 

\begin{definition}[Persistent space]
A \emph{persistent space} is a functor 
\begin{equation*}
\Vec{X}\colon \overline{\mathbb{N}}_0\rightarrow \mathbf{Top} \hspace{1pt} 
\end{equation*}
where $\overline{\mathbb{N}}_0\coloneqq \mathbb{N}_0\cup \{\infty\}$.
The persistent space $\Vec{X}$ is of \emph{finite type}, if $\exists m\in \mathbb{N}_0$ such that $\Vec{X}(i\leq j)$ is an isomorphism $\forall m\leq i\leq j$.
\end{definition}

\begin{example}
An abstract simplicial complex viewed as a poset equipped with the Alexandrov topology can be viewed as a finite topological space. Let $K_0\subseteq K_1\subseteq \cdots \subseteq K_m$ be a filtered simplicial complex. Then, $\Vec{K}(i)\coloneqq K_i$ for $0\leq i< m$ and $\Vec{K}(i)\coloneqq K_m$ for $m\leq i$ defines a persistent space of finite type.
\end{example}

\noindent
Let $\Vec{X}$ be a persistent space and $F=F^\infty$ a cosheaf on $\Vec{X}(\infty)$. In the following, we use the notation $X_i\coloneqq \Vec{X}(i)$ and $f_i\coloneqq \Vec{X}(i\leq i+1)$. The cosheaf $F^\infty$ defines a cosheaf $F^i$ on $X_i$ for all $i\in\mathbb{N}_0$ via the pull back $F^i\coloneqq \Vec{X}(i\leq \infty)^* F^\infty$. Since $\Vec{X}(i\leq \infty)=\Vec{X}(j\leq\infty)\circ\Vec{X}(i\leq j)$ for all $i\leq j\leq \infty$, we obtain $F^i\cong\Vec{X}(i\leq \infty)^* F^\infty\cong\Vec{X}(i\leq j)^*\Vec{X}(j\leq \infty)^*F^\infty\cong\Vec{X}(i\leq j)^*F^j$. Hence, we can also define the $F^i$ by iteratively pulling back $F^\infty$ to every space $X_i$ as depicted in the following diagram
\begin{equation} \label{pullback_diag}
\begin{tikzcd} 
F^0 & \arrow[l,swap,mapsto,"f_0^*"] F_1 & \arrow[l,swap,mapsto,"f_1^*"] \cdots & \arrow[l,swap,mapsto,"f_{i\minus 1}^*"] F^{i} & \arrow[l,swap,mapsto,"f_{i}^*"] F^{i+1} & \arrow[l,swap,mapsto,"f_{i+1}^*"] \cdots & \arrow[l,mapsto] F^\infty \\[-10pt]
X_0 \arrow[r,"f_0"] & X_1 \arrow[r,"f_1"] & \cdots \arrow[r,"f_{i\minus 1}"] & X_{i} \arrow[r,"f_{i}"] & X_{i+1} \arrow[r,"f_{i+1}"] & \cdots \arrow[r] & X_\infty
\end{tikzcd}
\end{equation}
Our goal is to compute the persistent homology of $F^\infty$ over $\Vec{X}$ which we define in the following way: Let $H_k(f_i)\colon H_k(X_{i},f_{i}^*F^{i+1})\rightarrow H_k(X_{i+1},F^{i+1}\big)$ denote the induced morphism in homology. Then~\eqref{pullback_diag} induces the following persistence module
\begin{equation} \label{cohomology_copersistence_module}
\begin{tikzcd}[column sep=large]
H_k(X_0,F^0) \arrow[r,"H_k(f_0)"] & H_k(X_1,F^1) \arrow[r,"H_k(f_1)"] &  H_k(X_2,F^2) \arrow[r,"H_k(f_2)"] & \cdots
\end{tikzcd}
\end{equation}
denoted by $H_k(\Vec{X},F)\colon \mathbb{N}_0\rightarrow \mathbf{vec}$. We call $H_k(\Vec{X},F)$ the homology persistence module of $(\Vec{X},F)$. If $\Vec{X}$ is of finite type, then \eqref{cohomology_copersistence_module} is determined up to isomorphism by the finite persistence module of $F^m$ over the finite persistent space in \eqref{cohomology_copersistence_module_finite}
\begin{equation} \label{cohomology_copersistence_module_finite}
\begin{tikzcd}[column sep=large]
F^0 & \arrow[l,swap,mapsto,"f_0^*"] F^1 & \arrow[l,swap,mapsto,"f_1^*"] \cdots &[8pt] \arrow[l,swap,mapsto,"f_{m\minus 1}^*"] F^m \\[-10pt]
X_0 \arrow[r,"f_0"] & X_1 \arrow[r,"f_1"] & \cdots \arrow[r,"f_{m\minus 1}"] & X_m \\[-10pt]
H_k(X_0,F^0) \arrow[r,"H_k(f_0)"] & H_k(X_1,F^1) \arrow[r,"H_k(f_1)"] & \cdots \arrow[r,"H^k(f_{m\minus 1})"] & H_k(X_{m},F^{m}) 
\end{tikzcd}
\end{equation}
In practice, the finite persistent space in \eqref{cohomology_copersistence_module_finite} will most likely be a simplicial tower, i.e.\ a sequence of arbitrary simplicial maps. In this case, we can explicitly construct chain complexes and morphisms  computing the persistent homology. Let $f\colon K\rightarrow L$ be a simplicial map and $F$ be a cosheaf on $L$. The pullback $f^*F$ on $K$ is defined by $f^*F(\sigma)\coloneqq F\big(f(\sigma)\big)$. The $k$-chains of $F$ and $f^*F$ are defined by
\begin{equation*}
\begin{aligned}
C_k(L,F) &\coloneqq \bigoplus_{\tau\in L^k} F(\tau) \\ 
C_k(K,f^*F) &\coloneqq\bigoplus_{\sigma\in K^k} f^*F(\sigma)= \bigoplus_{\sigma\in K^k} F\big(f(\sigma)\big)
\end{aligned}
\end{equation*}
The induced map $C_k(f)\colon C_k(K,f^*F)\rightarrow C_k(L,F)$ is defined by the block-matrix with the following entries: For $\sigma\in K^k$ and $\tau\in L^k$
\begin{equation} \label{induced_maps_cochain}
\begin{aligned}
& C_k(f)_{\sigma\tau}\colon F\big(f(\sigma)\big)\rightarrow F(\tau) \\[5pt]
& C_k(f)_{\sigma\tau}=\begin{cases} \text{id} \hspace{5pt} \text{ : if } f(\sigma)=\tau \\ 0 \hspace{5pt} \text{ : else} \end{cases}
\end{aligned}
\end{equation}
The homology of a cosheaf over a simplicial complex is computed by a chain complex dual to the one in~\eqref{cochain_complex}. Given a simplicial tower as in \eqref{cohomology_copersistence_module_finite}, the cosheaf chain complexes together with the chain morphisms of ~\eqref{induced_maps_cochain} induced by the $f_i$ yield the following chain complex of persistence modules 
\begin{equation} \label{cochain_comp_typeT}
\begin{tikzcd}[column sep=huge]
C_{k+1}\big(K_0,F^0\big) \arrow[d,"\partial_{k+1}^0"] \arrow[r,"C_{k+1}(f_{0})"] & C_{k+1}\big(K_1,F^1\big) \arrow[d,"\partial_{k+1}^1"] \arrow[r,"C_{k+1}(f_{1})"] & \cdots \arrow[r,"C_{k+1}(f_{m\minus 1})"] & C_{k+1}\big(K_{m},F^m\big) \arrow[d,"\partial_{k+1}^m"] \\
C_{k}\big(K_0,F^0\big) \arrow[d,"\partial_{k}^0"] \arrow[r,"C_{k}(f_{0})"] & C_{k}\big(K_1,F^1\big) \arrow[d,"\partial_{k}^1"] \arrow[r,"C_{k}(f_{1})"] & \cdots \arrow[r,"C_{k}(f_{m\minus 1})"] & C_{k}\big(K_{m},F^m\big) \arrow[d,"\partial_{k}^m"] \\
C_{k\minus 1}\big(K_0,F^0\big) \arrow[r,"C_{k\minus 1}(f_{0})"] &  C_{k\minus 1}\big(K_1,F^1\big) \arrow[r,"C_{k\minus 1}(f_{1})"] & \cdots \arrow[r,"C_{k\minus 1}(f_{m\minus 1})"] & C_{k\minus 1}\big(K_{m},F^m\big)
\end{tikzcd}
\end{equation}
which computes the persistent homology of $F$ on $\Vec{K}$. By extending \eqref{cochain_comp_typeT} by isomorphisms we view 
\begin{equation*} 
\begin{tikzcd}
C_k(K_0,F^0) \arrow[r,"C_k(f_{0})"] &[10pt] C_k(K_1,F^1) \arrow[r,"C_k(f_{1})"] &[8pt] \cdots \arrow[r,"C_k(f_{m\minus 1})"] &[12pt] C_k(K_m,F^m) \arrow[r,"\cong"] &[-3pt] \cdots
\end{tikzcd}
\end{equation*}
denoted by $C^k(\Vec{K},F)$ as a persistence module of finite type and denote the corresponding chain complex of persistence modules by
\begin{equation} \label{typeT_pers_complex}
\begin{tikzcd}
C_{k+1}(\Vec{K},F) \arrow[r,"\partial_{k+1}"] & C_{k}(\Vec{K},F) \arrow[r,"\partial_{k}"] & C_{k\minus 1}(\Vec{K},F)
\end{tikzcd}
\end{equation}
As in the case of persistent sheaves, we have two options to compute the persistent homology. The first option is to compute the homology $H_k(K_i,F^i)$, $H_k(K_{i+1},F^{i+1})$ and the induced morphism $H_k(f_i)\colon H_k(K_{i},F^{i})\rightarrow H_k(K_{i+1},F^{i+1})$ pointwise. The second option is to compute the homology of the chain complex of persistence modules~\eqref{typeT_pers_complex} in the category of graded modules.

\begin{example}
Let $\Vec{K}$ be a simplicial tower as in \eqref{cohomology_copersistence_module_finite} and $F$ be the constant cosheaf on $K_m$ defined by $F(\sigma)=\field$ and $F(\sigma\leq\tau)=\text{id}$ for all $\sigma\leq\tau\in K_m$. In this case $f_i^*F^i$ will be the constant cosheaf on $K_i$ for all $0\leq i<m$. It is easy to see that for constant cosheaves the chain complexes dual to the ones in \eqref{cochain_complex} and the chain complex morphisms defined by \eqref{induced_maps_cochain} are just the chain complexes of the underlying simplicial complexes and simplicial maps. Hence, the homology of the constant cosheaf computes the homology of the underlying space and we obtain
\begin{equation}\label{constant_cosheaf_homology}
\begin{tikzcd}[column sep=large]
H_k(K_i,F^i) \arrow[r,"H_k(f_i)"] \arrow[d,swap,"\cong"] & H_k(K_{i+1},F^{i+1}) \arrow[d,"\cong"] \\
H_k(K_i) \arrow[r,"H_k(f_i)"] & H_k(K_{i+1})
\end{tikzcd}
\end{equation}
Moreover, \eqref{constant_cosheaf_homology} implies that the persistent homology of the constant cosheaf over the tower is isomorphic to the persistent homology of the underlying tower. This shows that persistent homology is just a special case of persistent cosheaf homology. 
\end{example}


\cancel{

\section{Proofs in Section \ref{sec_cohomology_presentations}} \label{app_cohomology_presentations}
\cancel{
\begin{proposition} \label{appendix_prop_kernel}
The following exact sequence
\begin{equation*}
\begin{tikzcd}
0 & \ker\psi \arrow[l] & \ker(g_0\minus r) \arrow[l,swap,"\kappa"] &[15pt] Q_1 \arrow[l,swap,"\begin{pmatrix}q \\ g_1\end{pmatrix}"]
\end{tikzcd} 
\end{equation*}
is a presentation of $\ker\psi$.
\end{proposition}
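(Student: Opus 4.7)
The plan is to verify the four conditions that make the displayed sequence a presentation of $\ker\psi$: (a) $\ker(g_0-r)$ and $Q_1$ are free modules, (b) $\kappa$ is well-defined with image in $\ker\psi$ and is surjective, (c) $\begin{pmatrix}q\\g_1\end{pmatrix}$ factors through $\ker(g_0-r)$, and (d) $\ker\kappa = \operatorname{im}\begin{pmatrix}q\\g_1\end{pmatrix}$. Condition (a) is essentially free: $Q_1$ is free by hypothesis, and $\ker(g_0-r)$ is a submodule of the free module $Q_0\times R_1$ over the PID $\field[t]$, hence free. So the real work is the exactness conditions (b)--(d).

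For (b), first take $(x,y)\in\ker(g_0-r)$ and apply $\psi\circ\mu$: by commutativity of \eqref{comp_presentations} we have $\psi\mu(x) = \nu g_0(x) = \nu r(y)$, and the bottom row of \eqref{comp_presentations} is exact at $R_0$, so $\nu r = 0$. Thus $\kappa$ lands in $\ker\psi$. For surjectivity, pick $a\in\ker\psi$. Since $\mu$ is surjective (presentation of $M$), lift $a$ to some $x\in Q_0$ with $\mu(x)=a$. Then $\nu g_0(x) = \psi\mu(x) = 0$, so $g_0(x)\in\ker\nu = \operatorname{im} r$, giving $y\in R_1$ with $r(y)=g_0(x)$. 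Then $(x,y)\in\ker(g_0-r)$ and $\kappa(x,y)=a$.

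For (c), the composition is $(g_0-r)\binom{q}{g_1}(z) = g_0q(z) - rg_1(z)$. By commutativity of the middle-bottom square of \eqref{comp_presentations}, $rg_1 = g_0q$, so this vanishes.

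The heart of the proof is (d), and the key input is that the presentation of $N$ is \emph{reduced}, i.e.\ $r$ is injective. One inclusion is easy: if $(x,y) = \binom{q}{g_1}(a)$, then $\kappa(x,y) = \mu q(a) = 0$ since $\mu q = 0$. For the harder inclusion, suppose $(x,y)\in\ker\kappa$, so $\mu(x)=0$. Exactness of the top row at $Q_0$ gives $a\in Q_1$ with $q(a)=x$. Then $r g_1(a) = g_0 q(a) = g_0(x) = r(y)$, and injectivity of $r$ forces $g_1(a) = y$. Hence $\binom{q}{g_1}(a) = (x,y)$. The main obstacle, and the only place the reduced-presentation hypothesis enters, is precisely this use of $\ker r = 0$ to conclude $g_1(a)=y$; without reducedness one could only say $g_1(a)-y \in \ker r$ and would have to enlarge $Q_1$ to present the kernel as well.
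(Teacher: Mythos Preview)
Your proof is correct and follows essentially the same approach as the paper's: the four verifications (b)--(d) match the paper's items (1)--(4) almost line by line, including the crucial use of injectivity of $r$ (the reduced-presentation hypothesis) to conclude $g_1(a)=y$ in the hard inclusion of (d). One small slip: in (d) you write ``exactness of the top row at $Q_0$'' but mean the middle row of \eqref{comp_presentations} (the presentation of $M$); the argument itself is fine.
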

}

\begin{proof}[Proof of Proposition~\ref{presentation_kernel}]
~
\setstretch{1.2}
\begin{enumerate}
\item Let $a\in\ker\psi$. Since $\mu$ is an epimorphism, $\exists x\in Q_0$ s.t.\ $\mu(x)=a$ and $\psi\circ \mu(x)=0=\nu\circ g_0(x)$. We get $g_0(x)\in\ker \nu=\im r$ and $\exists y\in R_1$ s.t.\ $r(y)=g_0(x)$. Thus, $(g_0\minus r)(x,y)=0$ and $\kappa(x,y)=\mu(x)=a$. Hence, $\kappa$ is an epimorphism.
\item Let $(x,y)\in \ker \kappa$. Then, $\kappa(x,y)=\mu(x)=0$ and $x\in\ker \mu=\im q$. Thus, $\exists a\in Q_1$ s.t.\ $q(a)=x$. Since, $g_0(x)=g_0\circ q(a)=r\circ g_1(a)=r(y)$, we get $g_1(a)-y\in\ker r=0$. This implies, $g_1(a)=y$ and $\begin{pmatrix}q \\ g_1\end{pmatrix}(a)=(q(a),g_1(a))=(x,y)$. Hence, $\ker \kappa\subseteq \im \begin{pmatrix}q \\ g_1\end{pmatrix}$.

Conversely, let $(x,y)\in\im \begin{pmatrix}q \\ g_1\end{pmatrix}$. Then $\exists a\in Q_1$ s.t.\ $\begin{pmatrix}q \\ g_1\end{pmatrix}(a)=(q(a),g_1(a))=(x,y)$. We get $\kappa(x,y)=\kappa(q(a),g_1(a))=\mu\circ q(a)=0$. Hence, $(x,y)\in\ker \kappa$ and $\ker \kappa=\im \begin{pmatrix}q \\ g_1\end{pmatrix}$.
\end{enumerate}
\end{proof}

\cancel{
\begin{proposition} \label{appendix_prop_commutativity}
The following diagram commutes:
\begin{equation*}
\begin{tikzcd}
0 & L \arrow[l] \arrow[d,"\phi"] &[10pt] P_0 \arrow[l,swap,"\lambda"] \arrow[d,"\begin{pmatrix}f_0 \\ 0\end{pmatrix}"] &[15pt] P_1 \arrow[l,swap,"p"] \arrow[d,"f_1"] \\[10pt]
0 & \ker\psi \arrow[l] & \ker(g_0\minus r) \arrow[l,swap,"\kappa"] & Q_1 \arrow[l,swap,"\begin{pmatrix}q \\ g_1\end{pmatrix}"]
\end{tikzcd} 
\end{equation*}
\end{proposition}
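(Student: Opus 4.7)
The plan is to verify, in order: (i) that the central vertical arrow $\begin{pmatrix}f_0 \\ 0\end{pmatrix}$ actually lands in $\ker(g_0\minus r)\subseteq Q_0\times R_1$, so that the diagram is well-typed; (ii) the right-hand square; and (iii) the left-hand square. Each of these three checks is an elementary element-wise calculation that invokes exactly one piece of data supplied by the hypothesis diagram \eqref{comp_presentations}, namely either one of the two complex-of-presentations conditions $g_0\circ f_0=0$ and $g_1\circ f_1=0$ or one of the commutative squares of \eqref{comp_presentations}.

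For (i), I would take $x\in P_0$ and apply $\begin{pmatrix}g_0 & \minus r\end{pmatrix}$ to $(f_0(x),0)$, obtaining $g_0(f_0(x))$, which vanishes by $g_0\circ f_0=0$. Hence $(f_0(x),0)\in\ker(g_0\minus r)$ and $\begin{pmatrix}f_0 \\ 0\end{pmatrix}$ is a well-defined morphism with the claimed codomain. For (ii), for $x\in P_1$ I would expand $\begin{pmatrix}q\\ g_1\end{pmatrix}(f_1(x))=(q(f_1(x)),g_1(f_1(x)))$; the first coordinate rewrites as $f_0(p(x))$ via the commutativity $q\circ f_1=f_0\circ p$ coming from the top-right square of \eqref{comp_presentations}, while the second coordinate is $0$ by $g_1\circ f_1=0$. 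The result is precisely $\begin{pmatrix}f_0\\ 0\end{pmatrix}(p(x))$.

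For (iii), I would unfold the definition of $\kappa$: for $x\in P_0$ we get $\kappa\bigl(\begin{pmatrix}f_0\\ 0\end{pmatrix}(x)\bigr)=\kappa(f_0(x),0)=\mu(f_0(x))$, and this equals $\phi(\lambda(x))$ by the middle-left commutative square of \eqref{comp_presentations}, i.e. $\mu\circ f_0=\phi\circ\lambda$.

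There is no genuine obstacle: the proposition is a routine diagram chase. The only point requiring care is bookkeeping, since each of the three sub-claims is responsible for exactly one hypothesis, and one must resist conflating them. Being precise here pays off, because in the ensuing Theorem~\ref{thm_cohomology_presentation} this lemma is used to lift a preimage of any element in $\im\phi$ back through $P_0$ into $\ker(g_0\minus r)$, so each of the three commutativities above is invoked explicitly.
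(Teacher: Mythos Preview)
Your proposal is correct and matches the paper's proof essentially verbatim: the paper also proceeds in three steps, first verifying that $\begin{pmatrix}f_0\\0\end{pmatrix}$ lands in $\ker\begin{pmatrix}g_0 & \minus r\end{pmatrix}$ using $g_0\circ f_0=0$, then checking the right square via $q\circ f_1=f_0\circ p$ and $g_1\circ f_1=0$, and finally the left square via $\mu\circ f_0=\phi\circ\lambda$.
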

}

\begin{proof}[Proof of Proposition~\ref{kernel_lemma}]
~
\setstretch{1.2}
\begin{enumerate}
\item Let $x\in P_0$. Since $g_0\circ f_0=0$, we have $(g_0\minus r)(f_0(x),0)=g_0(f_0(x))=0$ and $\im\begin{pmatrix}f_0 \\ 0\end{pmatrix}\subseteq \ker(g_0\minus r)$.
\item Let $x\in P_1$. Since $g_1\circ f_1=0$ we get $\begin{pmatrix}q \\ g_1\end{pmatrix}\circ f_1(x)=\big(q(f_1(x)),g_1(f_1(x))\big)=\big(f_0(p(x)),0\big)=\begin{pmatrix}f_0 \\ 0\end{pmatrix}\circ p(x)$. Hence, the right square commutes.
\item Let $x\in P_0$. Then $\kappa\circ \begin{pmatrix}f_0 \\ 0\end{pmatrix}(x)=\kappa(f_0(x),0)=\mu(f_0(x))=\phi\circ \lambda(x)$. Thus, also the left square commutes.
\end{enumerate}
\end{proof}

\cancel{
\begin{theorem} \label{appendix_theorem_cohomology_presentation}
The following exact sequence 
\begin{equation*}
\begin{tikzcd}
0 & \ker\psi / \im\phi \arrow[l] &[5pt] \ker(g_0\minus r) \arrow[l,swap,"\pi\circ \kappa"] &[35pt] P_0\times Q_1 \arrow[l,swap,"\begin{pmatrix}f_0 \hspace{7pt} \minus q \\ 0 \hspace{9pt} \minus g_1 \end{pmatrix}"]
\end{tikzcd} 
\end{equation*}
is a presentation of $\ker\psi/\im\phi$.
\end{theorem}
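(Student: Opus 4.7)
The plan is to verify the two defining properties of a presentation of $\ker\psi/\im\phi$. Writing $\rho$ for the rightmost map $\bigl(\begin{smallmatrix} f_0 & \minus q \\ 0 & \minus g_1\end{smallmatrix}\bigr)$, I need to show that $\pi\circ\kappa$ is an epimorphism and that $\ker(\pi\circ\kappa)=\im\rho$. The exactness at $\ker(g_0\minus r)$ will rely heavily on Proposition~\ref{presentation_kernel}, which already presents $\ker\psi$, and on Proposition~\ref{kernel_lemma}, which supplies the compatibility square between the presentations of $L$ and $\ker\psi$ via $\phi$.

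Surjectivity of $\pi\circ\kappa$ is immediate, since $\kappa$ is an epimorphism by Proposition~\ref{presentation_kernel} and $\pi$ is an epimorphism as a quotient map.

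For the easy inclusion $\im\rho\subseteq\ker(\pi\circ\kappa)$, I would pick $(a,b)\in P_0\times Q_1$, expand $\kappa(\rho(a,b))$ using $\kappa(x,y)=\mu(x)$, and then simplify via the identity $\mu\circ f_0=\phi\circ\lambda$ from Proposition~\ref{kernel_lemma} together with $\mu\circ q=0$. What remains is $\phi(\lambda(a))\in\im\phi=\ker\pi$, and so $\pi\circ\kappa\circ\rho=0$.

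The reverse inclusion is the main step and the one where the two propositions really combine. Starting from $(x,y)\in\ker(\pi\circ\kappa)$, the element $\kappa(x,y)$ lies in $\im\phi$, so choose $a\in L$ with $\phi(a)=\kappa(x,y)$. The key move is to use that $\lambda$ is an epimorphism (the top row is a presentation of $L$) to lift $a$ further to some $b\in P_0$ with $\lambda(b)=a$. Proposition~\ref{kernel_lemma} then gives $\kappa(f_0(b),0)=\phi\circ\lambda(b)=\kappa(x,y)$, whence $(f_0(b),0)-(x,y)\in\ker\kappa$. By Proposition~\ref{presentation_kernel} this kernel equals the image of the map $Q_1\to\ker(g_0\minus r)$ sending $z$ to $(q(z),g_1(z))$, so pulling back to some $z\in Q_1$ and rearranging exhibits $(x,y)=\rho(b,z)$. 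The only real care needed is in bookkeeping the signs and in chaining the two successive liftings (first into $L$ through $\phi$, then into $P_0$ through $\lambda$); the rest is a mechanical diagram chase.
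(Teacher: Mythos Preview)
Your proposal is correct and follows essentially the same route as the paper's proof: surjectivity via the composition of two epimorphisms, then both inclusions for $\ker(\pi\circ\kappa)=\im\rho$ using Proposition~\ref{presentation_kernel} to identify $\ker\kappa$ and Proposition~\ref{kernel_lemma} for the commutativity $\kappa\circ\bigl(\begin{smallmatrix}f_0\\0\end{smallmatrix}\bigr)=\phi\circ\lambda$. The structure, the two successive liftings through $\phi$ and $\lambda$, and the final pullback to $Q_1$ all match the paper's argument.
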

}

\begin{proof}[Proof of Theorem~\ref{thm_cohomology_presentation}]
~
\setstretch{1.2}
\begin{enumerate}
\item Since $\kappa$ is an epimorphism by Proposition
\ref{presentation_kernel}
and $\pi$ is an epimorphism as a quotient map, $\pi\circ \kappa$ is also an epimorphism.
\item Let $(x,y)\in \ker(\pi\circ \kappa)$ i.e.\ $\pi\circ \kappa(x,y)=0$ and $\kappa(x,y)\in\ker\pi=\im\phi$. Hence, $\exists a\in L$ s.t.\ $\phi(a)=\kappa(x,y)$ and, moreover, $\exists b\in P_0$ s.t.\ $\lambda(b)=a$. By Proposition
\ref{kernel_lemma},
we get $\phi\circ \lambda(b)=\kappa\circ \begin{pmatrix}f_0 \\ 0\end{pmatrix}(b)=\kappa(x,y)$ and $\begin{pmatrix}f_0 \\ 0\end{pmatrix}(b)-(x,y)\in\ker \kappa=\im\begin{pmatrix}q \\ g_1\end{pmatrix}$. Thus, $\exists z\in Q_1$ s.t.\ $\begin{pmatrix}q \\ g_1\end{pmatrix}(z)=\begin{pmatrix}f_0 \\ 0\end{pmatrix}(b)-(x,y)$ and $(x,y)=\begin{pmatrix}f_0 \\ 0\end{pmatrix}(b)-\begin{pmatrix}q \\ g_1\end{pmatrix}(z)=\begin{pmatrix}f_0 & \minus q \\ 0 & \minus g_1 \end{pmatrix}(b,z)$. Therefore, $(x,y)\in\im\begin{pmatrix}f_0 & \minus q \\ 0 & \minus g_1 \end{pmatrix}$ and $\ker \kappa\subseteq \im\begin{pmatrix}f_0 & \minus q \\ 0 & \minus g_1 \end{pmatrix}$.

Conversely, let $(x,y)\in\im\begin{pmatrix}f_0 & \minus q \\ 0 & \minus g_1 \end{pmatrix}$ i.e.\ $(x,y)=\begin{pmatrix}f_0 & \minus q \\ 0 & \minus g_1 \end{pmatrix}(a,b)$ for some $(a,b)\in P_0\times Q_1$. We get $\kappa(x,y)=\kappa(f_0(a),0)-\kappa(q(b),g_1(b))=\mu\circ f_0(a)-\mu\circ q(b)=\phi\circ \lambda(a)\in\im\phi$. Hence, $\pi\circ \kappa(x,y)=0$, $(x,y)\in\ker\pi\circ \kappa$ and $\ker \kappa=\im\begin{pmatrix}f_0 & \minus q \\ 0 & \minus g_1 \end{pmatrix}$.
\end{enumerate}
\end{proof}
\cancel{
\begin{theorem}
Given a complex of presentations as in \eqref{comp_presentations}, represented by the matrices $p,q,r,f_0,f_1,g_0,g_1$ of \tamal{dimensions $O(n)\times O(n)$, the} algorithm described in Section \ref{sec_cohomology_presentations} computes the barcode of the cohomology in $O(n^3)$ time.
\end{theorem}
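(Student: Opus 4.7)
The plan is to reduce the correctness analysis to Theorem~\ref{thm_cohomology_presentation}, which identifies the presentation of $\ker\psi/\im\phi$ with generators $\ker(g_0\minus r)$, relations $P_0\times Q_1$, and differentials $\pi\circ\kappa$ and $\begin{pmatrix}f_0 & \minus q \\ 0 & \minus g_1\end{pmatrix}$. Since the right-hand differential matrix is already built from the input, the algorithmic work is to produce a basis of $\ker(g_0\minus r)$, re-express the differential in that basis, and read the barcode from the resulting presentation matrix.

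For Step 2, I would note that a left-to-right column reduction of $(g_0\minus r)$ amounts to right multiplication by an invertible upper-triangular matrix $T$. The new basis of $Q_0\times R_1$ is given by the columns of $T$, and those new basis vectors whose image in $(g_0\minus r)\,T$ is zero form a free basis of $\ker(g_0\minus r)$; a rank count against Theorem~\ref{thm_cohomology_presentation} confirms these zero columns span the whole kernel rather than a proper subspace.

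For Step 3, I would verify that the same basis change $T$, applied to the codomain of $\begin{pmatrix}f_0 & \minus q \\ 0 & \minus g_1\end{pmatrix}$, leaves the rows corresponding to the zero columns of the reduced $(g_0\minus r)$ unchanged. The reason is that in a left-to-right column reduction, every elementary operation adds a pivot column into a not-yet-pivoted column; the inverse basis change on the codomain of the right-hand matrix is therefore a row operation of the form ``a pivot row gets augmented by a kept row,'' so the kept rows (those indexed by zero columns of the reduced matrix, i.e., the kernel basis) are never modified. Combined with Proposition~\ref{kernel_lemma}, which guarantees $\im\!\begin{pmatrix}f_0 & \minus q \\ 0 & \minus g_1\end{pmatrix}\subseteq \ker(g_0\minus r)$, this ensures that after deleting the pivot rows, what remains is exactly the matrix of the presentation map $P_0\times Q_1 \to \ker(g_0\minus r)$ in the new kernel basis.

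For Step 4 and the barcode extraction, I would invoke the canonical form~\eqref{pform}: after a left-to-right column reduction of the presentation matrix, each pivot in row $r_j$ with pivot position in column $c_{j_l}$ corresponds to an elementary summand $\mathbb{I}_{[\text{deg}(r_j),\text{deg}(c_{j_l}))}$, and every non-pivot row corresponds to $\mathbb{I}_{[\text{deg}(r_j),\infty)}$. A subsequent row reduction to canonical form does not alter the pivot positions or their degrees, so the bars can be read off directly after the column reduction. For complexity, each of the two column reductions is performed on an $O(n)\times O(n)$ matrix and takes $O(n^3)$ time, dominating the cost of block-matrix assembly and row deletion. The main subtlety is the row-preservation claim in Step 3, whose validity hinges on processing the column reduction strictly from left to right so that zero columns are never used as pivots.
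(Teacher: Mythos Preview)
Your proposal is correct and follows essentially the same argument as the paper: both reduce correctness to Theorem~\ref{thm_cohomology_presentation}, justify the row-deletion in Step~3 by observing that a left-to-right column reduction of $(g_0\ \minus r)$ only ever uses pivot columns as sources (so the induced row operations on the second matrix never touch rows indexed by zero columns), and then read the barcode from pivots after column-reducing the presentation matrix without performing the final row reduction. One small remark: your ``rank count'' in Step~2 is unnecessary, since column reduction is a change of basis and the zero columns of the reduced matrix automatically form a basis of the kernel; and in Step~3 the row that gets modified is always a pivot row while the row being added in may be either pivot or kept, but your conclusion that kept rows remain unchanged is correct.
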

}
\begin{proof}[Proof of Theorem~\ref{thm:perscohom}]
From the input we can construct the matrices $\begin{pmatrix}g_0 & \minus r\end{pmatrix}$ and $\begin{pmatrix}f_0 & \minus q \\ 0 & \minus g_1 \end{pmatrix}$ and order the columns by degree of the generators. After a column reduction from left to right, the zero columns of the reduced matrix span $\ker\begin{pmatrix}g_0 & \minus r\end{pmatrix}$. Removing the rows of  $\begin{pmatrix}f_0 & \minus q \\ 0 & \minus g_1 \end{pmatrix}$ corresponding to the non-zero columns of the reduced matrix yields 
\begin{equation} \label{appendix_presentation_matrix_cohomology}
\ker \begin{pmatrix}g_0 & \minus r\end{pmatrix} \xleftarrow{\begin{pmatrix}f_0 & \minus q \\ 0 & \minus g_1 \end{pmatrix}} P_0\times Q_1
\end{equation}
Note that we don't have to do any transformations on $\begin{pmatrix}f_0 & \minus q \\ 0 & \minus g_1 \end{pmatrix}$. When we perform the change of basis, we never add a zero column of $\ker\begin{pmatrix}g_0 & \minus r\end{pmatrix}$ to another column. Therefore, in the other matrix there is never a row added to a row corresponding to a zero column. Hence the only rows we care about stay unchanged.

Theorem~\ref{thm_cohomology_presentation}
shows that \eqref{appendix_presentation_matrix_cohomology} is a presentation of the homology $\ker\psi/\im\phi$. After column reducing the presentation matrix in \eqref{appendix_presentation_matrix_cohomology} from left to right, every column is either zero or has a unique pivot. We could now row reduce the presentation matrix and bring it to canonical form. After this step we could read off the elementary summands defining the barcode. Every row $r_j$ corresponds to a bar that is born at $\text{deg}(r_j)$. If it is a pivot row with pivot in column $c_{j_l}$ the bar dies at index $\text{deg}(c_{j_l})$. If it is a zero row the bar lives forever. Since the row reduction does not change the pivots and the bars only depend on the pivots, we can avoid the row reduction and read of the bars directly. Hence, after the column reduction every pivot row contributes a bar $[\text{deg}(r_j),\text{deg}(c_{j_l})\big)$ and every non-pivot row contributes a bar $[\text{deg}(r_j),\infty\big)$.    
\end{proof}

}


\section{Proofs in Section \ref{sec_presentation_algorithm}} \label{app_presentation_algo}

\textbf{Constructions:}
Consider a morphism of canonical presentations: 
\begin{equation} \label{morph123}
\begin{tikzcd}
P_0 \arrow[d,swap,"f_0"] & \arrow[l,swap,"p"] P_1 \arrow[d,"f_1"] \\
Q_0 & \arrow[l,swap,"q"] Q_1
\end{tikzcd}
\end{equation}
We can construct the birth- and death-time annotations for $f_0$ in the following way: If $c_j$ is a column of $f_0$, then the $j$-th row of $p$ has at most one non-zero entry. If there is a non-zero entry in column $g_{l_j}$ we set $b(c_j)=\text{deg}(c_j)$ and $d(c_j)=\text{deg}(g_{l_j})$. Otherwise, we set $b(c_j)=\text{deg}(c_j)$ and $d(c_j)=\infty$. Similarly we can construct the annotations of the rows of $f_0$. 

Conversely, if we have a matrix $f_0$ with birth- and death-time annotations we can construct a morphism of presentations as in \eqref{morph123} in the following way: Let $P_0=\text{span}\{c_1,\ldots,c_v\}$ and $Q_0=\text{span}\{r_1,\ldots,r_w\}$ where $c_j$ and $r_k$ are the columns and rows of $f_0$ with $\text{deg}(c_j)=b(c_j)$ and $\text{deg}(r_k)=b(r_k)$. Let $c_{j_s}$ and $r_{k_t}$ be the columns and rows of $f_0$ with $d(c_{j_s})<\infty$ and $d(r_{k_t})<\infty$. Define $P_1\coloneqq \text{span}\{g_s\}_s$, $Q_1\coloneqq \text{span}\{h_t\}_t$ with $\text{deg}(g_s)=d(c_{j_s})$, $\text{deg}(h_t)=d(r_{k_t})$ and $p(g_s)\coloneqq c_{j_s}$, $q(h_t)\coloneqq r_{k_t}$. Finally, let $f_1$ be the unique morphism making the diagram commute.

 Given the matrix $f_0$ with birth- and death-time annotations, we can construct the corresponding morphism of persistence modules \eqref{MPM} in the following way: First we define the columns $c_j$ and rows $r_k$ that are active at index $i$:
\begin{equation*}
\begin{aligned}
& G_M^i\coloneqq\{c_j\vert b(c_j)\leq i \text{ and } d(c_j)>i\} \\
& G_N^i\coloneqq\{r_k\vert b(r_k)\leq i \text{ and }d(r_k)>i\} 
\end{aligned}
\end{equation*}
Given a column $c_j$ of $f_0$, we have $f_0(c_j)=\sum_{l=1}^w \lambda_l^j r_l$. Using this relation, we define 
\begin{equation*}
\begin{aligned}
& M(i) \coloneqq \text{span }G_M^i \\
& N(i) \coloneqq \text{span }G_N^i \\
& M(i\leq i+1)(c_j)\coloneqq \begin{cases} c_j \text{ if }c_j\in G_M^{i+1} \\ 0 \text{ else} \end{cases} \\
& N(i\leq i+1)(r_k)\coloneqq \begin{cases} r_k \text{ if }r_k\in G_N^{i+1} \\ 0 \text{ else} \end{cases} \\
& \phi(i)(c_j)\coloneqq  \sum_{l: r_l\in G_N^i} \lambda_l^j r_l
\end{aligned}
\end{equation*}
It is easy to see that the morphism of persistence modules defined in this way corresponds to the morphism of graded modules $\text{coker }p\xrightarrow{} \text{coker }q$ induced by the morphism of presentations defined by $f_0$.

\cancel{
\begin{theorem}
The algorithm described in Section \ref{sec_presentation_algorithm} is correct. If $\text{dim}(M_i),\text{dim}(N_i)=O(n_i)$ and $\sum_{i=0}^m n_i=n$ the algorithm takes $O(n^3)$ time.
\end{theorem}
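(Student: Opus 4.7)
The plan is to argue correctness by induction on $i$ and then account for the total cost. The invariant I will carry through the induction states that after processing index $i$, the stored matrix $f_0^i$ together with its birth/death annotations encodes a canonical presentation of the morphism of persistence modules \eqref{ith_restriction}, and that the transformed matrices $A_i^t, B_i^t, C_i^t$ represent the maps $M(i\leq i+1)$, $N(i\leq i+1)$, $\phi(i)$ in the bases of the active generators of that presentation. The base case $i=0$ is immediate: $f_0^0 = C_0$ with every row and column labeled $[0,\infty)$ gives a canonical presentation of the restricted morphism (with trivial relations), and $A_0^t,B_0^t,C_0^t$ are literally $A_0,B_0,C_0$ in the standard basis.

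For the inductive step, the essential fact is the standard description of left-to-right column reduction: after reducing $A_i^t$ to $A_i^r$, the zero columns of $A_i^r$ identify precisely the generators of $M_i$ that map to zero in $M_{i+1}$ (equivalently, the bars on the top row that die at $i+1$), while the non-pivot rows form a basis of $\text{coker } M(i\leq i+1)$ and thus correspond to new generators born at $i+1$. Mirroring the column operations of $A_i^r$ on the columns of $f_0^i$ realizes a compatible change of basis on the alive top-row generators; pushing the implicit row-reduction of $A_i^r$ forward as column operations on $A_{i+1}, C_{i+1}$ is exactly the change of basis on $M_{i+1}$ that makes the new generators correspond to cokernel representatives. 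The same reasoning applied to $B_i^t$ handles the bottom row. Setting the death-times on the zeroed columns and rows, and appending new rows/columns extracted from $C_{i+1}^t$ with birth-time $i+1$, produces $f_0^{i+1}$. By the uniqueness of $f_1$ given a canonical $f_0$ observed in Section \ref{sec_background}, the updated matrix $f_0^{i+1}$ with its annotations still encodes a canonical presentation, and it presents the morphism \eqref{ith_restriction} with $i$ replaced by $i+1$. Since $A_j,B_j,C_j$ are isomorphisms for $j\geq m$, the algorithm produces no further changes after $i=m$, and $f_0^m$ represents a canonical presentation of \eqref{morphism_persistence_modules}.

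For the complexity I will use the fact that the final matrix $f_0^m$ has size $O(n)\times O(n)$. At step $i$, reducing $A_i^t$ and $B_i^t$ (of size $O(n_{i+1})\times O(n_i)$) requires $O(n_i\cdot n_{i+1})$ column additions; each corresponding column/row operation on the current $f_0^i$ or on $A_{i+1},B_{i+1},C_{i+1}$ touches $O(n)$ entries. Summing over all steps bounds the total number of scalar operations by $O(n^3)$, matching the cost of a single Gaussian elimination on an $O(n)\times O(n)$ matrix.

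The main obstacle I expect is the careful verification in the inductive step that the three simultaneous updates (mirroring column operations on $f_0^i$, pushing the row-reduction effect forward to $A_{i+1},B_{i+1},C_{i+1}$, and appending new generators with birth-time $i+1$) indeed preserve canonicality and commutativity of the presentation square; this is where one must track how freshly born generators interact with the surviving ones and use the uniqueness of $f_1$ to avoid explicitly maintaining the relation matrices.
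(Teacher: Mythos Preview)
Your proposal is correct and follows essentially the same approach as the paper's proof: both argue correctness by induction on the index $i$, carrying the invariant that $f_0^i$ with its birth/death annotations encodes a canonical presentation of the restricted morphism \eqref{ith_restriction}, and both identify zero columns of $A_i^r,B_i^r$ with dying bars, non-pivot rows with newborn generators, and mirror the column operations on $f_0^i$ while pushing the implicit row reduction forward to $A_{i+1},B_{i+1},C_{i+1}$. Your complexity bookkeeping differs slightly (you count $O(n_i\cdot n_{i+1})$ column additions per step via the pivot bound, whereas the paper uses $O(n_i^2)$ for the column reduction and $O(n_{i+1}^2)$ for the forwarded row operations), but both sum to $O(n^3)$; the paper separates the two phases more explicitly, which you may want to do as well since the operations on $A_{i+1},B_{i+1},C_{i+1}$ stem from the row reduction, not the column reduction.
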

}

\begin{proof}[Proof of Theorem~\ref{thm:bar-from-matrices}]
Given a morphism of persistence modules $\phi\colon M\rightarrow N$ of finite type 
\begin{equation} \label{morphism_persistence_modules_app}
\begin{tikzcd}
M_0 \arrow[r,"A_0"] \arrow[d,"C_0"] & M_1 \arrow[r,"A_1"] \arrow[d,"C_1"] & \cdots \arrow[r,"A_{m\minus 1}"] &[3pt] M_m \arrow[d,"C_m"] \arrow[r,"\cong"] & \cdots \\
N_0 \arrow[r,"B_0"] & N_1 \arrow[r,"B_1"] & \cdots \arrow[r,"B_{m\minus 1}"] & N_m \arrow[r,"\cong"] & \cdots
\end{tikzcd}
\end{equation}
represented by the sequences of matrices $(A_0,\ldots, A_{m\minus 1})$, $(B_0,\ldots, B_{m\minus 1})$ and $(C_0,\ldots,C_m)$. We show by induction that the algorithm maintains a canonical presentation 
\begin{equation} \label{ith_presentation_app}
\begin{tikzcd}
P_0^l \arrow[d,swap,"f_0^l"] & \arrow[l,swap,"p^l"] P_1^l \arrow[d,"f_1^l"] \\
Q_0^l & \arrow[l,swap,"q^l"] Q_1^l
\end{tikzcd}
\end{equation}
inducing a persistence module via the construction above   
\begin{equation} \label{ith_restriction_iso_app}
\begin{tikzcd}
M^l_0 \arrow[r,"A^l_0"] \arrow[d,"C^l_0"] & \cdots \arrow[r,"A^l_{l\minus 1}"] & M^l_l \arrow[d,"C^l_l"] \arrow[r,"\text{id}"] & M^l_l \arrow[r,"\text{id}"] \arrow[d,"C^l_l"] & \cdots  \\
N^l_0 \arrow[r,"B^l_0"]  & \cdots \arrow[r,"B^l_{l\minus 1}"] & N^l_l \arrow[r,"\text{id}"] & N^l_l \arrow[r,"\text{id}"] & \cdots
\end{tikzcd}
\end{equation}
that is isomorphic to 
\begin{equation} \label{ith_restriction_app}
\begin{tikzcd}
M_0 \arrow[r,"A_0"] \arrow[d,"C_0"] & \cdots \arrow[r,"A_{l\minus 1}"] & M_l \arrow[d,"C_l"] \arrow[r,"\text{id}"] & M_l \arrow[r,"\text{id}"] \arrow[d,"C_l"] & \cdots  \\
N_0 \arrow[r,"B_0"]  & \cdots \arrow[r,"B_{l\minus 1}"] & N_l \arrow[r,"\text{id}"] & N_l \arrow[r,"\text{id}"] & \cdots
\end{tikzcd}
\end{equation}
while processing the morphism of persistence modules \eqref{morphism_persistence_modules_app} from left $l=0$ to right $l=m$. 

\vspace{0.1in}
\textbf{Induction base $l=0$:} 
Let $M_0=\text{span}\{c_1,\ldots,c_{d_0}\}$, $N_0=\text{span}\{r_1,\ldots,r_{d'_0}\}$ and $C_0$ be given w.r.t.\ these bases. We set $M^0_0=M_0$, $N^0_0=N_0$ and  $C^0_0=C_0$. With these definitions \eqref{ith_restriction_iso_app} is clearly isomorphic to \eqref{ith_restriction_app} for $l=0$. In the algorithm we set $f^0_0=C_0$, $b(c_j)=0$, $d(c_j)=\infty$ and $b(r_k)=0$, $d(r_k)=\infty$ for the columns $c_j$ and rows $r_k$ of $f_0^0$. As discussed in the constructions above, on the presentation level this is equivalent to defining $P^0_0\coloneqq \text{span}\{c_1,\ldots,c_{d_0}\}$ with $\text{deg}(c_j)=0$, $Q^0_0\coloneqq \text{span}\{r_1,\ldots,r_{d'_0}\}$ with $\text{deg}(r_k)=0$, $f^0_0=C^0_0$ and $P^0_1, Q^0_1, p^0, q^0, f^0_1=0$. With these definitions \eqref{ith_presentation_app} is clearly a presentation inducing \eqref{ith_restriction_iso_app} for $l=0$. The algorithm proceeds with $f^0_0$, $A^t_0=A_0$ and $B^t_0=B_0$ to the next step.

\vspace{0.1in}
\textbf{Induction step $l=i$ to $l=i+1$:}  
We are given $f^i_0$, $A^t_i$ and $B^t_i$ from the previous step of the algorithm where $f^i_0$ defines a canonical presentation \eqref{ith_presentation_app} for $l=i$ and the columns and rows of $f^i_0$ are ordered w.r.t\ increasing birth-time. Our inductive hypothesis is that this presentation induces a persistence module \eqref{ith_restriction_iso_app} isomorphic to \eqref{ith_restriction_app} and that the matrices $A^t_i$ and $B^t_i$ are given w.r.t.\ the bases induced on $M^i_i$ and $N^i_i$ by the presentation. Since $A^t_i$ and $B^t_i$ are given w.r.t.\ the bases induced by $f^i_0$, every column of $A^t_i$ or $B^t_i$ corresponds to a column or row of $f^i_0$ and the columns of $A^t_i$ and $B^t_i$ inherit the column and row order of $f^i_0$. In particular, since \eqref{ith_presentation_app} is a presentation of \eqref{ith_restriction_iso_app}, every generator that is alive at index $i$ lives forever. Thus every column $c_j$ or row $r_k$ of $f^i_0$ corresponding to a column of $A^t_i$ or $B^t_i$ satisfies $d(c_j)=\infty$ or $d(r_k)=\infty$. The first step of the algorithm is to column reduce $A^t_i$ and $B^t_i$ from left to right while doing the corresponding column and row operations on $f^i_0$. The matrices $A^t_i$ and $B^t_i$ have size $O(n_{i+1})\times O(n_i)$. Hence, the reductions take $O(n_i^2\cdot n_{i+1})=O(n_i^2\cdot n)$ time. The matrix $f^i_0$ is of size $O(n)\times O(n)$ but since we are only adding columns or rows corresponding to columns of $A^t_i$ or $B^t_i$ we perform at most $O(n_i^2)$ column and row additions. Therefore, the operations on $f^i_0$ take $O(n^2_i\cdot n)$ time. After this step we obtain reduced matrices $A^r_i$ and $B^r_i$ and a new matrix $\overline{f}^i_0$. The column and row operations on $f^i_0$ correspond to a change of basis  $P^i_0\xrightarrow{\cong} \overline{P}^i_0$ and $Q^i_0\xrightarrow{\cong}\overline{Q}^i_0$. Since all the columns and rows of $f^i_0$ corresponding to columns of $A^t_i$ and $B^t_i$ satisfy $d(c_j),d(r_k)=\infty$, we can set $\overline{P}^i_1=P^i_1$, $\overline{Q}^i_1=Q^i_1$, $\overline{p}^i=p^i$, $\overline{q}^i=q^i$ and $\overline{f}^i_1=f^i_1$ and we obtain a presentation 
\begin{equation} \label{new_ith_pres}
\begin{tikzcd}
\overline{P}_0^i \arrow[d,swap,"\overline{f}_0^i"] & \arrow[l,swap,"\overline{p}^i"] \overline{P}_1^i \arrow[d,"\overline{f}_1^i"] \\
\overline{Q}_0^i & \arrow[l,swap,"\overline{q}^i"] \overline{Q}_1^i
\end{tikzcd}
\end{equation}
isomorphic to \eqref{ith_presentation_app}. Since the new presentation is obtained by a change of basis, it induces a persistence module 
\begin{equation*} \label{new_ith_restriction}
\begin{tikzcd}
\overline{M}^i_0 \arrow[r,"\overline{A}^i_0"] \arrow[d,"\overline{C}^i_0"] & \cdots \arrow[r,"\overline{A}^i_{i\minus 1}"] & \overline{M}^i_i \arrow[d,"\overline{C}^i_i"] \arrow[r,"\text{id}"] & \overline{M}^i_i \arrow[r,"\text{id}"] \arrow[d,"\overline{C}^i_i"] & \cdots  \\
\overline{N}^i_0 \arrow[r,"\overline{B}^i_0"]  & \cdots \arrow[r,"\overline{B}^i_{i\minus 1}"] & \overline{N}^i_i \arrow[r,"\text{id}"] & \overline{N}^i_i \arrow[r,"\text{id}"] & \cdots
c\end{tikzcd}
\end{equation*}
isomorphic to \eqref{ith_restriction_iso_app}. We can now attach the matrices $A^r_i$ and $B^r_i$ to obtain a persistence module  
\begin{equation*} \label{new_i+1th_restriction}
\begin{tikzcd}
\overline{M}^i_0 \arrow[r,"\overline{A}^i_0"] \arrow[d,"\overline{C}^i_0"] & \cdots \arrow[r,"\overline{A}^i_{i\minus 1}"] & \overline{M}^i_i \arrow[d,"\overline{C}^i_i"] \arrow[r,"A^r_i"] & M_{i+1} \arrow[r,"\text{id}"] \arrow[d,"C_{i+1}"] & M_{i+1} \arrow[r,"\text{id}"] \arrow[d,"C_{i+1}"] & \cdots  \\
\overline{N}^i_0 \arrow[r,"\overline{B}^i_0"]  & \cdots \arrow[r,"\overline{B}^i_{i\minus 1}"] & \overline{N}^i_i \arrow[r,"B^r_i"] & N_{i+1} \arrow[r,"\text{id}"] & N_{i+1} \arrow[r,"\text{id}"] & \cdots
\end{tikzcd}
\end{equation*}
isomorphic to \eqref{ith_restriction_app} for $l=i+1$. Next we row reduce $A^r_i$ and $B^r_i$ and order the rows w.r.t.\ the order of the pivot columns to obtain the matrices $A^{i+1}_{i}$ and $B^{i+1}_{i}$. This corresponds to a change of bases $M_{i+1}\xrightarrow{\cong}M^{i+1}_{i+1}$ and $N_{i+1}\xrightarrow{\cong} N^{i+1}_{i+1}$. Moreover, let $A^{i+1}_{i+1}$, $B^{i+1}_{i+1}$ and $C^{i+1}_{i+1}$ be the matrices $A_{i+1}$, $B_{i+1}$ and $C_{i+1}$ w.r.t.\ these new bases. We now set $M^{i+1}_j\coloneqq \overline{M}^i_j$, $N^{i+1}_j\coloneqq \overline{N}^i_j$ and $C^{i+1}_j=\overline{C}^i_j$ for $j=0,\ldots, i$ and $A^{i+1}_j=\overline{A}^i_j$, $B^{i+1}_j=\overline{B}^i_j$ for $j=0,\ldots, i-1$ to define the persistence module \eqref{ith_restriction_iso_app} for $l=i+1$. The matrices $A^{i+1}_{i}$ and $B^{i+1}_{i}$ are now completely reduced. Let $ca_1,\ldots,ca_s$ and $cb_1,\ldots, cb_t $ be the zero columns of $A^{i+1}_i$ and $B^{i+1}_i$ and $\overline{c}_{j_1},\ldots, \overline{c}_{j_s}$ and $\overline{r}_{k_1},\ldots, \overline{r}_{k_t}$ be the corresponding columns and rows of $\overline{f}^i_0$. Furthermore, let $ra_1,\ldots ra_u$ and  $rb_1,\ldots, rb_v$ be the zero rows of $A^{i+1}_i$ and $B^{i+1}_i$. We now define the presentation \eqref{ith_presentation_app} for $l=i+1$ in the following way:
\begin{equation*}
\begin{aligned}
& P^{i+1}_0\coloneqq \overline{P}^i_0\oplus \text{span}\{ra_1,\ldots,ra_u\} \text{ with } \text{deg}(ra_j)=i+1 \\
& Q^{i+1}_0\coloneqq \overline{Q}^i_0\oplus \text{span}\{rb_1,\ldots,rb_v\} \text{ with } \text{deg}(rb_k)=i+1 \\
& P^{i+1}_1\coloneqq \overline{P}^i_1\oplus \text{span}\{\gamma_1,\ldots,\gamma_s\} \text{ with } \text{deg}(\gamma_j)=i+1 \\
& Q^{i+1}_1\coloneqq \overline{Q}^i_1\oplus \text{span}\{\rho_1,\ldots,\rho_t\}  \text{ with } \text{deg}(\rho_k)=i+1 \\
& p^{i+1}\coloneqq \overline{p}^i\oplus \{\gamma_1\rightarrow \overline{c}_{j_1},\ldots,\gamma_s\rightarrow \overline{c}_{j_s}\} \\
& q^{i+1}\coloneqq \overline{q}^i\oplus \{\rho_1\rightarrow \overline{r}_{k_1},\ldots,\rho_t\rightarrow \overline{r}_{k_t}\} \\
& f^{i+1}_0\coloneqq \overline{f}^i_0\oplus C^{i+1}_{i+1}|_{ra_1,\ldots,ra_u} \\
& f^{i+1}_1\coloneqq \text{ unique map making \eqref{ith_presentation_app} commute}
\end{aligned}
\end{equation*} 
where $C^{i+1}_{i+1}|_{ra_1,\ldots,ra_u}$ denotes the matrix $C^{i+1}_{i+1}$ restricted to the columns corresponding to the zero-rows of $A^{i+1}_i$. By construction this new presentation induces the persistence module \eqref{ith_restriction_iso_app} for $l=i+1$ as defined above which is isomorphic to \eqref{ith_restriction_app} for $l=i+1$. The actual algorithm does not perform the row reduction on $A^r_i$ and $B^r_i$. It just updates the matrix $f^{i+1}_{0}$ by setting $d(\overline{c}_{j_w})=i+1$ and $d(\overline{r}_{k_w})=i+1$ and adding the transformed matrix $C^{i+1}_{i+1}$ restricted to the columns corresponding to non-pivot rows $ra_1,\ldots ra_u$. By the construction discussed in the beginning of this section, this is enough to recover the canonical presentation defined above. Since the non-pivot rows are exactly the ones that get reduced to zero in a row reduction, this does not change the result. The algorithm has to perform the transformations, corresponding to the row reduction of $A^r_i$ and $B^r_i$, of the matrices $A_{i+1}$, $B_{i+1}$ and $C_{i+1}$ which are of size $O(n_{i+2})\times O(n_{i+1})$ and $O(n_{i+1})\times O(n_{i+1})$, respectively. Since we would have to do $O(n_{i+1}^2)$ row additions on $A^r_i$ and $B^r_i$ in the reduction, we have to do $O(n_{i+1}^2)$ column additions on $A_{i+1}$ and $B_{i+1}$ and $O(n_{i+1}^2)$ column and row additions on $C_{i+1}$. Hence, this step takes $O(n_{i+1}^2\cdot n)$ time. Finally the algorithm proceeds with the new presentation matrix $f^{i+1}_0$ and the matrices $A^t_{i+1}\coloneqq A^{i+1}_{i+1}$ and $B^t_{i+1}\coloneqq B^{i+1}_{i+1}$ given w.r.t.\ the bases $M^{i+1}_{i+1}$ and $N^{i+1}_{i+1}$. 

At every step of the algorithm we spend $O(n_i^2\cdot n)+O(n_{i+1}^2\cdot n)$ time. Thus, overall it takes time  
\begin{equation*}
\begin{aligned}
& \phantom{=}\sum_{i=0}^{m\minus 1} O(n_i^2\cdot n)+O(n_{i+1}^2\cdot n)  \\
& =\sum_{i=0}^{m\minus 1}O(n_i^2)\cdot O(n)+\sum_{i=1}^{m}O(n_{i}^2)\cdot O(n) \\
& = O(n)\cdot \sum_{i=0}^{m}O(n_{i}^2)+O(n)\cdot\sum_{i=0}^{m}O(n_{i}^2) \\
& = O(n^3).
\end{aligned}
\end{equation*}
\end{proof}


\section{Proofs in Section \ref{sec_comp_presentations}} \label{app_comp_presentations}

\cancel{

\begin{proposition} \label{appendix_prop_uniqueness_morphism}
Given a commutative diagram with exact rows of the form 
\begin{equation*}
\begin{tikzcd}
0 & \mathbb{I}_{[a,b)} \arrow[l] \arrow[d,"\phi"] & \mathbb{I}_{[a,\infty)} \arrow[l,swap,"\cdot 1"] \arrow[d,"f_0"] & \mathbb{I}_{[b,\infty)} \arrow[l,swap,"\cdot t^{b\minus a}"] \arrow[d,"f_1"] & 0 \arrow[l] \\
0 & \mathbb{I}_{[c,d)} \arrow[l] & \mathbb{I}_{[c,\infty)} \arrow[l,swap,"\cdot 1"] & \mathbb{I}_{[d,\infty)} \arrow[l,swap,"\cdot t^{d\minus c}"] & 0 \arrow[l] 
\end{tikzcd}  
\end{equation*}
such that $a<b$ and $c<d$. There are three cases:
\begin{enumerate}
    \item If $c\leq a<d\leq b$, then $(\phi=0 \iff f_0=0 \iff f_1=0)$.
    \item If $c<d\leq a<b$, then $\phi=0$ and $(f_0=0 \iff f_1=0)$.
    \item Else, $\phi=f_0=f_1=0$.
\end{enumerate}
\end{proposition}

\begin{proof}
As defined in \eqref{hom_pmod}, if $c\leq a<d\leq b$, there is a unique non-zero morphism, up to multiplication with a scalar, between to interval modules. We denote this morphism w.r.t.\ the unit $1$ of the field $\field$ by $(\cdot 1)$ and $(\cdot t^r)$. 
\begin{enumerate}
\item Since $a<b$, $c\leq a$, $d\leq b$ and $c<d$, we have $(\cdot 1)\circ f_0=0$ and $f_0\circ (\cdot t^{b\minus a})=0$ if and only if $f_0=0$ and $(\cdot t^{d\minus c})\circ f_1=0$ if and only if $f_1=0$. Moreover, since $a<d$, we have $\phi\circ (\cdot 1)=0$ if and only if $\phi=0$. 
\item Since $d\leq a$, we get $\phi=0$. Similiar to (1), we obtain $f_0=0 \iff f_1=0$.
\item If we are not in case (1), then $\phi=0$. If $a<c$, then $f_0=0$. If $f_1\neq 0$, then $d\leq b$ and $(\cdot t^{d\minus c})\circ f_1\neq 0=f_0\circ (\cdot t^{b\minus a})$. A contradiction. If $b<d$, then $f_1=0$. If $f_0\neq 0$, then $c\leq a$ and $f_0\circ (\cdot t^{b\minus a})\neq 0=(\cdot t^{d\minus c})\circ f_1$. A contradiction. If $c\leq a$ and $d\leq b$ we are in case (1) or (2).
\end{enumerate}
\end{proof}
\cancel{
\begin{proposition} \label{appendix_prop_uniqueness_morphism_2}
Given a commutative diagram of the form 
\begin{equation*}
\begin{tikzcd}
0 & \mathbb{I}_{[c,d)} \arrow[l] & \mathbb{I}_{[c,\infty)} \arrow[l,swap,"\cdot 1"] &[5pt] \mathbb{I}_{[d,\infty)} \arrow[l,swap,"\cdot t^{d-c}"] & 0 \arrow[l] \\
0 & N \arrow[l] \arrow[u,"\pi"] & Q_0 \arrow[l,swap,"\nu"] \arrow[u,"\pi_0"] & Q_1 \arrow[l,swap,"q"] \arrow[u,"\pi_1"] & 0 \arrow[l] \\
0 & M \arrow[l] \arrow[u,"\phi"] & P_0 \arrow[l,swap,"\mu"] \arrow[u,"f_0"] & P_1 \arrow[l,swap,"p"] \arrow[u,"f_1"] & 0 \arrow[l] \\
0 & \mathbb{I}_{[a,b)} \arrow[l] \arrow[u,"\iota"] & \mathbb{I}_{[a,\infty)} \arrow[l,swap,"\cdot 1"] \arrow[u,"\iota_0"] & \mathbb{I}_{[b,\infty)} \arrow[l,swap,"\cdot t^{b-a}"] \arrow[u,"\iota_1"] & 0 \arrow[l]
\end{tikzcd}  
\end{equation*}
If not $c<d\leq a<b$, then $\pi_0\circ f_0\circ \iota_0$ and $\pi_1\circ f_1\circ \iota_1$ are uniquely determined by $\phi$.
\end{proposition}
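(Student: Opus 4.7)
The plan is to reduce the claim to a direct application of Proposition~\ref{appendix_prop_uniqueness_morphism}, by restricting the given morphism of canonical presentations to the elementary summands picked out by $\iota,\iota_0,\iota_1$ on the source and $\pi,\pi_0,\pi_1$ on the target. The three composites
\begin{equation*}
\pi\circ\phi\circ\iota\colon\mathbb{I}_{[a,b)}\to\mathbb{I}_{[c,d)},\quad \pi_0\circ f_0\circ\iota_0\colon\mathbb{I}_{[a,\infty)}\to\mathbb{I}_{[c,\infty)},\quad \pi_1\circ f_1\circ\iota_1\colon\mathbb{I}_{[b,\infty)}\to\mathbb{I}_{[d,\infty)}
\end{equation*}
assemble, by the commutativity hypotheses, into a morphism between the two elementary presentations of $\mathbb{I}_{[a,b)}$ and $\mathbb{I}_{[c,d)}$, since each $\iota_s$ is a direct-summand inclusion and each $\pi_s$ a direct-summand projection of a canonical presentation. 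Proposition~\ref{appendix_prop_uniqueness_morphism} therefore applies verbatim with its $\phi,f_0,f_1$ replaced by the three composites above. Since case~(2) of that proposition is excluded by hypothesis, only cases~(1) and~(3) can occur.

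In case~(3), Proposition~\ref{appendix_prop_uniqueness_morphism} immediately forces all three composites to vanish, so $\pi_0\circ f_0\circ\iota_0=0$ and $\pi_1\circ f_1\circ\iota_1=0$ independently of the particular lift $(f_0,f_1)$ of $\phi$; the composites are thus trivially determined by $\phi$.

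In case~(1), i.e.\ $c\leq a<d\leq b$, each of the three Hom spaces between the interval modules in the induced diagram is one-dimensional over $\field$ by~\eqref{hom_pmod}. I would write the three composites as $\lambda,\mu,\nu$ times the canonical non-zero morphisms, with $\lambda,\mu,\nu\in\field$; the scalar $\lambda$ is intrinsic to $\phi$. Commutativity of the leftmost square of the induced diagram, together with the fact that the horizontal epimorphisms $\mathbb{I}_{[a,\infty)}\twoheadrightarrow\mathbb{I}_{[a,b)}$ and $\mathbb{I}_{[c,\infty)}\twoheadrightarrow\mathbb{I}_{[c,d)}$ are the identity on the generator in degree $a$ (resp.\ $c$), forces $\mu=\lambda$. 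Commutativity of the rightmost square then reduces, after evaluation on the generator of $\mathbb{I}_{[b,\infty)}$, to the polynomial identity $\mu\,t^{b-c}=\nu\,t^{b-c}$, hence $\nu=\mu=\lambda$. Both composites are therefore uniquely determined by the single scalar $\lambda$, which depends only on $\phi$.

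The main obstacle I anticipate is purely bookkeeping: keeping the vertical directions of $\iota_s$ and $\pi_s$ consistent, and confirming in every subcase of case~(3) of Proposition~\ref{appendix_prop_uniqueness_morphism} that the corresponding composite is indeed forced to zero (either because its target Hom space already vanishes by~\eqref{hom_pmod}, or by commutativity with an injective multiplication-by-$t^r$ map). Once this case split is unified, no new ingredients beyond Proposition~\ref{appendix_prop_uniqueness_morphism} and the one-dimensionality of Hom spaces in~\eqref{hom_pmod} are required.
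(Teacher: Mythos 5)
Your proof is correct, and it establishes more than the version of this result that the paper actually retains, namely Proposition~\ref{lemma1}, which asserts only the equivalence of vanishing of the three composites. The paper's proof of Proposition~\ref{lemma1} is the same two-step reduction you begin with: restrict along the summand inclusions $\iota_\bullet$ and projections $\pi_\bullet$ to obtain a morphism of elementary presentations, invoke Proposition~\ref{appendix_prop_uniqueness_morphism}, note that case~(2) is excluded by hypothesis, that case~(3) forces all three composites to zero, and that case~(1) links the vanishings. Your extra ingredient is the scalar computation in case~(1): pinning $\mu$ to $\lambda$ by evaluating both paths through the left square on the degree-$a$ generator (nonzero in $\mathbb{I}_{[c,d)}$ because $a<d$), and $\nu$ to $\mu$ via the right square, using injectivity of $\cdot\, t^{b-c}$ on the free module $\mathbb{I}_{[c,\infty)}$. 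That step is exactly what upgrades ``all three vanish together'' to ``$\pi_0\circ f_0\circ\iota_0$ and $\pi_1\circ f_1\circ\iota_1$ are uniquely determined by $\phi$,'' since $\lambda$ is the scalar of $\pi\circ\phi\circ\iota$ and so depends only on $\phi$ and the fixed canonical decompositions of $M$ and $N$, not on the chosen lift $(f_0,f_1)$. One small point you are right to flag: when $b=\infty$ or $d=\infty$ the module $\mathbb{I}_{[b,\infty)}$ or $\mathbb{I}_{[d,\infty)}$ is zero, so $\text{Hom}(\mathbb{I}_{[b,\infty)},\mathbb{I}_{[d,\infty)})=0$ rather than one-dimensional; in that degenerate subcase of~(1) the composite $\pi_1\circ f_1\circ\iota_1$ is zero outright and the left-square argument for $\pi_0\circ f_0\circ\iota_0$ goes through unchanged, so there is no gap.
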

}

\begin{proof}[Proof of Proposition~\ref{lemma1}]
By Proposition \ref{appendix_prop_uniqueness_morphism}, in case (1), we have $\pi\circ\phi\circ\iota=0\iff \pi_0\circ f_0\circ\iota_0=0\iff \pi_1\circ f_1\circ\iota_1=0$. Moreover, in case (3), we get $\pi\circ\phi\circ\iota=\pi_0\circ f_0\circ\iota_0=\pi_1\circ f_1\circ\iota_1=0$. 
\end{proof}

\cancel{
\begin{theorem} \label{modification_thm}
Let $L\xlongrightarrow{\phi} M \xlongrightarrow{\psi} N$ be a complex of graded $\field[t]$-modules, then there exists a complex of presentations presenting $L\xlongrightarrow{\phi} M \xlongrightarrow{\psi} N$. 
\end{theorem}
}

}

\begin{proof}[Proof of Theorem~\ref{modification_thm}]
Let  
\begin{equation} \label{complex_cannonical_presentation}
\begin{tikzcd}
0 & \mathbb{I}_{[a,b)} \arrow[l] \arrow[d,"\iota"] & \mathbb{I}_{[a,\infty)} \arrow[l,swap,"\cdot 1"] \arrow[d,"\iota_0"] & \mathbb{I}_{[b,\infty)} \arrow[l,swap,"\cdot t^{b\minus a}"] \arrow[d,"\iota_1"] & 0 \arrow[l] \\
0 & L \arrow[l] \arrow[d,"\phi"] & P_0 \arrow[l,swap,"\lambda"] \arrow[d,"f_0"] & P_1 \arrow[l,swap,"p"] \arrow[d,"f_1"] & 0 \arrow[l] \\
0 & M \arrow[l] \arrow[d,"\psi"] & Q_0 \arrow[l,swap,"\mu"] \arrow[d,"g_0"] & Q_1 \arrow[l,swap,"q"] \arrow[d,"g_1"] & 0 \arrow[l] \\
0 & N \arrow[l] \arrow[d,"\pi"] & R_0 \arrow[l,swap,"\nu"] \arrow[d,"\pi_0"] & R_1 \arrow[l,swap,"r"] \arrow[d,"\pi_1"] & 0 \arrow[l] \\
0 & \mathbb{I}_{[c,d)} \arrow[l] & \mathbb{I}_{[c,\infty)} \arrow[l,swap,"\cdot 1"] &[5pt] \mathbb{I}_{[d,\infty)} \arrow[l,swap,"\cdot t^{d\minus c}"] & 0 \arrow[l] 
\end{tikzcd}
\end{equation}
be a canonical presentation of $L\xlongrightarrow{\phi} M \xlongrightarrow{\psi} N$. Suppose that $g_0\circ f_0\neq 0$. Then, by Proposition~\ref{lemma1}, 
there exist $\mathbb{I}_{[a,b)}\leq L$ and $\mathbb{I}_{[c,d)}\leq N$ such that $\pi_0\circ g_0\circ f_0\circ \iota_0\neq 0$ and $c<d\leq a<b$. W.l.o.g.\ assume $\pi_0\circ g_0\circ f_0\circ \iota_0=(\cdot \eta t^{a-c})$. Let $c<d\leq e\leq a<b$. Define $Q'_0\coloneqq Q_0\oplus \mathbb{I}_{[e,\infty)}$, $Q'_1\coloneqq Q_1\oplus \mathbb{I}_{[e,\infty)}$ and 
\begin{equation} \label{modified_representation}
\begin{aligned}
& q'\colon Q'_1\rightarrow Q'_0 \hspace{2pt},\hspace{5pt} q'\coloneqq \begin{pmatrix} q & 0 \\ 0 & \text{id} \end{pmatrix} \\
& \mu'\colon Q'_0\rightarrow M \hspace{2pt},\hspace{5pt} \mu'\coloneqq \begin{pmatrix} \mu & 0  \end{pmatrix} \\
& f'_0\colon P_0\rightarrow Q'_0 \hspace{2pt},\hspace{5pt} f'_0\coloneqq \begin{pmatrix} f_0 \\ (\mathbb{I}_{[a,\infty)}\xrightarrow{\cdot t^{a\minus e}} \mathbb{I}_{[e,\infty)}) \end{pmatrix} \\
& f'_1\colon P_1\rightarrow Q'_1 \hspace{2pt},\hspace{5pt} f'_1\coloneqq \begin{pmatrix} f_1 \\ (\mathbb{I}_{[b,\infty)}\xrightarrow{\cdot t^{b\minus e}} \mathbb{I}_{[e,\infty)}) \end{pmatrix} \\
& g'_0\colon Q'_0\rightarrow R_0 \hspace{2pt},\hspace{5pt} g'_0\coloneqq \begin{pmatrix} g_0 & (\mathbb{I}_{[e,\infty)}\xrightarrow{\cdot (\minus\eta)t^{e\minus c}} \mathbb{I}_{[c,\infty)}) \end{pmatrix} \\
& g'_1\colon Q'_1\rightarrow R_1 \hspace{2pt},\hspace{5pt} g'_1\coloneqq \begin{pmatrix} g_1 & (\mathbb{I}_{[e,\infty)}\xrightarrow{\cdot (\minus\eta)t^{e\minus d}} \mathbb{I}_{[d,\infty)}) \end{pmatrix} 
\end{aligned}
\end{equation}
After replacing $Q_0$ and $Q_1$ by $Q'_0$ and $Q'_1$ we obtain 
\begin{equation} \label{pres_alt}
\begin{tikzcd}
0 & \mathbb{I}_{[a,b)} \arrow[l] \arrow[d,"\iota"] & \mathbb{I}_{[a,\infty)} \arrow[l,swap,"\cdot 1"] \arrow[d,"\iota_0"] & \mathbb{I}_{[b,\infty)} \arrow[l,swap,"\cdot t^{b\minus a}"] \arrow[d,"\iota_1"] & 0 \arrow[l] \\
0 & L \arrow[l] \arrow[d,"\phi"] & P_0 \arrow[l,swap,"\lambda"] \arrow[d,"f'_0"] & P_1 \arrow[l,swap,"p"] \arrow[d,"f'_1"] & 0 \arrow[l] \\
0 & M \arrow[l] \arrow[d,"\psi"] & Q'_0 \arrow[l,swap,"\mu'"] \arrow[d,"g'_0"] & Q'_1 \arrow[l,swap,"q'"] \arrow[d,"g'_1"] & 0 \arrow[l] \\
0 & N \arrow[l] \arrow[d,"\pi"] & R_0 \arrow[l,swap,"\nu"] \arrow[d,"\pi_0"] & R_1 \arrow[l,swap,"r"] \arrow[d,"\pi_1"] & 0 \arrow[l] \\
0 & \mathbb{I}_{[c,d)} \arrow[l] & \mathbb{I}_{[c,\infty)} \arrow[l,swap,"\cdot 1"] &[5pt] \mathbb{I}_{[d,\infty)} \arrow[l,swap,"\cdot t^{d\minus c}"] & 0 \arrow[l] 
\end{tikzcd}
\end{equation}
We now show that \eqref{pres_alt} is still a reduced presentation of $L\xlongrightarrow{\phi} M \xlongrightarrow{\psi} N$:
\begin{enumerate}
\item $\text{im }\mu'=\text{im }\mu=M$.
\item $\text{ker }\mu'=\text{ker }\mu\oplus \mathbb{I}_{[e,\infty)}=\text{im }q\oplus \mathbb{I}_{[e,\infty)}=\text{im }q'$.
\item $\text{ker }q'=\text{ker }q=0$.
\item $\mu'\circ f'_0=\begin{pmatrix} \mu & 0  \end{pmatrix}\circ \begin{pmatrix} f_0 \\ \mathbb{I}_{[a,\infty)}\xrightarrow{\cdot t^{a\minus e}} \mathbb{I}_{[e,\infty)} \end{pmatrix}=\mu\circ f_0=\phi\circ \lambda$.
\item $q'\circ f'_1=\begin{pmatrix} q & 0 \\ 0 & \text{id} \end{pmatrix}\circ \begin{pmatrix} f_1 \\ \mathbb{I}_{[b,\infty)}\xrightarrow{\cdot t^{b\minus e}} \mathbb{I}_{[e,\infty)} \end{pmatrix}=\begin{pmatrix} q\circ f_1 \\ (\mathbb{I}_{[b,\infty)}\xrightarrow{\cdot t^{b\minus e}} \mathbb{I}_{[e,\infty)}) \end{pmatrix} \\=\begin{pmatrix} f_0\circ p \\ (\mathbb{I}_{[a,\infty)}\xrightarrow{\cdot t^{a\minus e}} \mathbb{I}_{[e,\infty)})\circ p \end{pmatrix}=\begin{pmatrix} f_0 \\ (\mathbb{I}_{[a,\infty)}\xrightarrow{\cdot t^{a\minus e}} \mathbb{I}_{[e,\infty)}) \end{pmatrix}\circ p=f'_0\circ p$.
\item $\nu\circ g'_0=\nu\circ \begin{pmatrix} g_0 & (\mathbb{I}_{[e,\infty)}\xrightarrow{\cdot (\minus\eta)t^{e\minus c}} \mathbb{I}_{[c,\infty)}) \end{pmatrix}=\begin{pmatrix} \nu\circ g_0 & \nu\circ (\mathbb{I}_{[e,\infty)}\xrightarrow{\cdot (\minus\eta)t^{e\minus c}} \mathbb{I}_{[c,\infty)}) \end{pmatrix} \\ = \begin{pmatrix} \psi\circ \mu & 0 \end{pmatrix}=\psi\circ \mu'$. \\ Note that, since $d\leq e$, $\text{im }(\mathbb{I}_{[e,\infty)}\xrightarrow{\cdot (\minus\eta)t^{e\minus c}} \mathbb{I}_{[c,\infty)})\subseteq \mathbb{I}_{[d,\infty)}\subseteq \text{ker }\nu$.
\item $r\circ g'_1=r\circ \begin{pmatrix} g_1 & (\mathbb{I}_{[e,\infty)}\xrightarrow{\cdot (\minus\eta)t^{e\minus d}} \mathbb{I}_{[d,\infty)}) \end{pmatrix}=\begin{pmatrix} r\circ g_1 & r\circ (\mathbb{I}_{[e,\infty)}\xrightarrow{\cdot (\minus\eta)t^{e\minus d}} \mathbb{I}_{[d,\infty)}) \end{pmatrix} \\= \begin{pmatrix} g_0\circ q & (\mathbb{I}_{[e,\infty)}\xrightarrow{\cdot (\minus\eta)t^{e\minus c}} \mathbb{I}_{[c,\infty)}) \end{pmatrix}=\begin{pmatrix} g_0 & (\mathbb{I}_{[e,\infty)}\xrightarrow{\cdot (\minus\eta)t^{e\minus c}} \mathbb{I}_{[c,\infty)}) \end{pmatrix}\circ\begin{pmatrix} q & 0 \\ 0 & \text{id} \end{pmatrix}=g'_0\circ q'$.
\end{enumerate}
Moreover, we obtain 
\begin{equation}
\begin{aligned}
& \pi_0\circ g'_0\circ f'_0\circ \iota_0=\pi_0\circ \begin{pmatrix} g_0 & (\mathbb{I}_{[e,\infty)}\xrightarrow{\cdot (\minus\eta)t^{e\minus c}} \mathbb{I}_{[c,\infty)}) \end{pmatrix}\circ \begin{pmatrix} f_0 \\ (\mathbb{I}_{[a,\infty)}\xrightarrow{\cdot t^{a\minus e}} \mathbb{I}_{[e,\infty)}) \end{pmatrix}\circ \iota_0 \\ & =\pi_0\circ g_0\circ f_0\circ \iota_0 - \pi_0\circ (\mathbb{I}_{[a,\infty)}\xrightarrow{\cdot \eta t^{a\minus c}} \mathbb{I}_{[c,\infty)})\circ \iota_0=0
\end{aligned}
\end{equation}
since $\pi_0\circ g_0\circ f_0\circ \iota_0=(\mathbb{I}_{[a,\infty)}\xrightarrow{\cdot \eta t^{a\minus c}} \mathbb{I}_{[c,\infty)})$.
If $\mathbb{I}_{[a',\infty)}\neq \mathbb{I}_{[a,\infty)}$ or $\mathbb{I}_{[c',\infty)}\neq \mathbb{I}_{[c,\infty)}$, then $\pi'_0\circ g'_0\circ f'_0\circ \iota'_0=\pi'_0\circ g_0\circ f_0\circ \iota'_0$ since $\pi'_0\circ (\mathbb{I}_{[a,\infty)}\xrightarrow{\cdot \eta t^{a\minus c}} \mathbb{I}_{[c,\infty)}) \circ \iota'_0=0$. By commutativity, $\pi_0\circ g'_0\circ f'_0\circ \iota_0=0$ implies $\pi_1\circ g'_1\circ f'_1\circ \iota_1=0$. Therefore, we have reduced the number of pairs $\mathbb{I}_{[a,b)}$, $\mathbb{I}_{[c,d)}$ such that $\pi_0\circ g_0\circ f_0\circ \iota_0\neq 0$ by one. Since there is only a finite number of such pairs, we can repeat this process until $g'_0\circ f'_0=0$ and $g'_1\circ f'_1=0$.
\end{proof}

\cancel{
\begin{corollary}
Given a complex of graded $\field[t]$-modules $L\xlongrightarrow{\phi} M \xlongrightarrow{\psi} N$ and a canonical presentation as in \eqref{complex_cannonical_presentation} of size $n\coloneqq\text{max}\{\text{rank}(P_0),\text{rank}(Q_0),\text{rank}(S_0)\}$. Then we can modify \eqref{complex_cannonical_presentation} such that the obtained presentation is a complex of presentations of size $O(n)$.
\end{corollary}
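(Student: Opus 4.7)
The plan is to refine the qualitative existence argument of Theorem~\ref{modification_thm} into a quantitative one by choosing, in the construction of \eqref{modified_representation}, a very specific value of the new bar degree~$e$, and by batching together all "bad pairs" that share a common generator of $P_0$. This will let me add at most one new length-zero bar per non-zero column of the matrix $g_0\circ f_0$, which is the content of the algorithm stated right after the corollary.

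More concretely, I would first compute $g_0\circ f_0$ and examine it column by column. Fix a column $c_j$ of $g_0\circ f_0$ corresponding to a generator of $P_0$ of degree $a$. By Proposition~\ref{lemma1}, every non-zero entry $(c_j)_i$ of this column is of the form $\mathbb{I}_{[a,b)}\to\mathbb{I}_{[c_i,d_i)}$ with $c_i<d_i\le a<b$. In particular $d_i\le a$ for all such~$i$ simultaneously, so the choice $e\coloneqq a$ is admissible for every one of these pairs at once. I would then apply the construction \eqref{modified_representation} from the proof of Theorem~\ref{modification_thm} using this single value of $e$, but bundling all the bad pairs in column~$j$ into one modification: extend $Q_0$ and $Q_1$ by a single copy of $\mathbb{I}_{[a,\infty)}$ each, set $q'$ to act as the identity on the new summand (so the new bar has length zero and reducedness is preserved), declare $f_0'$ to send the $j$-th generator of $P_0$ to the new generator via multiplication by $t^{a-a}=1$, and declare $g_0'$ to send the new generator to $R_0$ by $-c_j$. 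The verification that the resulting diagram still presents $L\to M\to N$, that the composition becomes zero in column~$j$ after the correction, and that reducedness is maintained, is exactly as in the proof of Theorem~\ref{modification_thm}; the map $f_1'$ (resp.~$g_1'$) on the new summand is then uniquely determined by commutativity with $p$ (resp.~$r$) and the canonical form, which preserves the "complex of presentations" property on the relation side by Proposition~\ref{lemma1}.

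Iterating this over the non-zero columns of $g_0\circ f_0$ then kills every bad pair. Since distinct columns modify disjoint new summands, the modifications do not interfere, and the number of length-zero bars added is at most the number of non-zero columns of $g_0\circ f_0$, which is bounded by $\mathrm{rank}(P_0)=O(n)$. Hence
\[
\mathrm{rank}(Q_0')\le \mathrm{rank}(Q_0)+\mathrm{rank}(P_0)=O(n),
\]
and similarly for $Q_1'$, $P_0,P_1,R_0,R_1$ unchanged. The total size of the modified presentation is therefore $O(n)$, as required.

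The main obstacle I anticipate is bookkeeping rather than mathematics: one has to check that the one-shot choice $e=a$ simultaneously fixes every bad entry in a given column without accidentally introducing new non-zero compositions elsewhere. The reassuring point is that each new summand $\mathbb{I}_{[a,\infty)}$ in $Q_0$ is hit by exactly the $j$-th generator of $P_0$ and maps into $R_0$ only by $-c_j$, so the new contribution to $g_0'\circ f_0'$ is exactly $-c_j$ in column $j$ and zero elsewhere, cleanly cancelling the original column~$j$ and leaving other columns untouched. The complexity bound $O(n^3)$ for the implementation is then immediate from the cost of computing the matrix product $g_0\circ f_0$.
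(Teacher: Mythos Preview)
Your proposal is correct and follows essentially the same approach as the paper: both choose $e=a$ and batch all bad pairs sharing a common generator of $P_0$ into a single length-zero bar added to $Q_0$, yielding at most $\mathrm{rank}(P_0)\le n$ new generators. Your phrasing in terms of non-zero columns of $g_0\circ f_0$ is exactly the paper's implementation description that follows the corollary.
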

}

\begin{proof}[Proof of Corollary~\ref{cor:preserve-rank}]
In the proof of Theorem \ref{modification_thm}, we show that given $\mathbb{I}_{[a,b)}\leq L$ and $\mathbb{I}_{[c,d)}\leq N$ such that $c<d\leq a<b$ and $\pi_0\circ g_0\circ f_0\circ \iota_0\neq 0$, we can modify the presentations as in \eqref{modified_representation} such that $\pi_0\circ g'_0\circ f'_0\circ \iota_0=0$ and for all $\mathbb{I}_{[a',b')}\neq\mathbb{I}_{[a,b)} \leq L$ and $\mathbb{I}_{[c',d')}\neq \mathbb{I}_{[c,d)}\leq N$ we have $\pi'_0\circ g'_0\circ f'_0\circ \iota'_0=\pi'_0\circ g_0\circ f_0\circ \iota'_0$. Suppose $\mathbb{I}_{[c',d')}\leq N$ is another bar such that $c'<d'\leq a<b$ and $\pi'_0\circ g'_0\circ f'_0\circ \iota_0=\xi t^{a\minus c'}\neq0$. We could repeat the same process again to resolve this issue but we don't have to add another generator to $Q'_0$. We can assume that $e=a$ and modify the $g$-maps in the following way:
\begin{equation*}
\begin{aligned}
& g''_0\colon Q'_0\rightarrow R_0 \hspace{2pt},\hspace{5pt} g''_0\coloneqq \begin{pmatrix} g_0 & (\mathbb{I}_{[e,\infty)}\xrightarrow{\cdot (\minus\eta)t^{e\minus c}} \mathbb{I}_{[c,\infty)})+(\mathbb{I}_{[e,\infty)}\xrightarrow{\cdot (\minus\xi)t^{e\minus c'}} \mathbb{I}_{[c',\infty)}) \end{pmatrix} \\
& g''_1\colon Q'_1\rightarrow R_1 \hspace{2pt},\hspace{5pt} g''_1\coloneqq \begin{pmatrix} g_1 & (\mathbb{I}_{[e,\infty)}\xrightarrow{\cdot (\minus\eta)t^{e\minus d}} \mathbb{I}_{[d,\infty)})+(\mathbb{I}_{[e,\infty)}\xrightarrow{\cdot (\minus\xi)t^{e\minus d'}} \mathbb{I}_{[d',\infty)}) \end{pmatrix} 
\end{aligned}
\end{equation*}
After this modification we have 
\begin{equation*}
\begin{aligned}
& \pi_0\circ g''_0\circ f'_0\circ \iota_0=\pi_0\circ \begin{pmatrix} g_0 & (\mathbb{I}_{[e,\infty)}\xrightarrow{\cdot (\minus\eta)t^{e\minus c}} \mathbb{I}_{[c,\infty)})+(\mathbb{I}_{[e,\infty)}\xrightarrow{\cdot (\minus\xi)t^{e\minus c'}} \mathbb{I}_{[c',\infty)}) \end{pmatrix} \\ & \circ \begin{pmatrix} f_0 & (\mathbb{I}_{[a,\infty)}\xrightarrow{\cdot t^{a\minus e}} \mathbb{I}_{[e,\infty)}) \end{pmatrix}\circ \iota_0=\pi_0\circ g_0\circ f_0\circ \iota_0 - \pi_0\circ (\mathbb{I}_{[a,\infty)}\xrightarrow{\cdot \eta t^{a\minus c}} \mathbb{I}_{[c,\infty)})\circ \iota_0 \\ & -\pi_0\circ (\mathbb{I}_{[a,\infty)}\xrightarrow{\cdot \xi t^{a\minus c'}} \mathbb{I}_{[c',\infty)})\circ \iota_0=0
\end{aligned}
\end{equation*}
since $\pi_0\circ g_0\circ f_0\circ \iota_0=(\mathbb{I}_{[a,\infty)}\xrightarrow{\cdot \eta t^{a-c}} \mathbb{I}_{[c,\infty)})$ and $\pi_0\circ (\mathbb{I}_{[a,\infty)}\xrightarrow{\cdot \xi t^{a-c'}} \mathbb{I}_{[c',\infty)}) \circ \iota_0=0$.
Moreover, by the same argument, we get $\pi'_0\circ g''_0\circ f'_0\circ \iota_0=0$. If $\mathbb{I}_{[a'',b'')}\neq\mathbb{I}_{[a,b)}\leq L$ or $\mathbb{I}_{[c'',d'')}\neq\mathbb{I}_{[c,d)},\mathbb{I}_{[c',d')}\leq L$, then $\pi''_0\circ g''_0\circ f'_0\circ \iota''_0=\pi''_0\circ g_0\circ f_0\circ \iota''_0$ since $\pi''_0\circ (\mathbb{I}_{[a,\infty)}\xrightarrow{\cdot \eta t^{a-c}} \mathbb{I}_{[c,\infty)}) \circ \iota''_0=\pi''_0\circ (\mathbb{I}_{[a,\infty)}\xrightarrow{\cdot \xi t^{a-c'}} \mathbb{I}_{[c',\infty)}) \circ \iota''_0=0$. 

Using this construction method, given a generator $\mathbb{I}_{[a,b)}\leq L$, we can modify the presentation such that $\pi_0\circ g_0\circ f_0\circ \iota_0=0$ for all $\mathbb{I}_{[c,d)}\leq N$ by adding a single generator to $Q_0$ and $Q_1$. Since, $\text{rank}(P_0)\leq n$, to fix the whole presentation, we have to add at most $n$ generators to $Q_0$. This implies after the modification is completed, we have $\text{rank}(Q'_0)\leq 2n$. 

\end{proof}


\section{Proofs in Section \ref{sec_application_tower}} \label{app_application_tower}

\begin{proposition} \label{prop_modified_presentation_algo}
Consider a simplicial tower 
\begin{equation} \label{appendix_application_tower}
\begin{tikzcd}
K_0 \arrow[r,"f_0"] & K_1 \arrow[r,"f_1"] & \cdots \arrow[r,"f_{m\minus 1}"] &[3pt] K_m
\end{tikzcd}
\end{equation} 
where each $f_i$ is an elementary inclusion or collapse and $n$ is the number of elementary inclusions. Then, using a modified version of the algorithm in Section \ref{sec_presentation_algorithm}, we can compute a presentation of 
\begin{equation} \label{appendix_chain_modules}
\begin{tikzcd}[column sep=large]
C_k(\Vec{K}): \arrow[d,"\partial_k"] &[-35pt] C_k(K_0) \arrow[r,"C_k(f_0)"] \arrow[d,"\partial_k^0"] & C_k(K_1) \arrow[r,"C_k(f_1)"] \arrow[d,"\partial_k^1"] & \cdots \arrow[r,"C_k(f_{m\minus 1})"] &[10pt] C_k(K_m) \arrow[d,"\partial_k^{m}"] \\
C_{k\minus 1}(\Vec{K}): & C_{k\minus 1}(K_0) \arrow[r,"C_{k\minus 1}(f_0)"] & C_{k\minus 1}(K_1) \arrow[r,"C_{k\minus 1}(f_1)"] & \cdots \arrow[r,"C_{k\minus 1}(f_{m\minus 1})"] & C_{k\minus 1}(K_m)
\end{tikzcd}
\end{equation}
in $O(n^2)$ time.
\end{proposition}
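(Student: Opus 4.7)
The plan is to specialize \textbf{PresPersMod} (Section~\ref{sec_presentation_algorithm}) to the morphism of persistence modules induced by $\partial_k$, exploiting the structure of simplicial maps to avoid the general cubic cost. The key observation is that, in the bases of simplices, each column of $A_i = C_k(f_i)$ and $B_i = C_{k-1}(f_i)$ has at most one nonzero entry (equal to $1$), since a simplicial map sends each simplex either to $0$ or to a unique simplex of the same dimension. Consequently the column reductions prescribed by \textbf{PresPersMod} degenerate: every elementary move triggers at most one column or row addition on the presentation matrix $f_0^i$, and the transformed matrices $A_i^t, B_i^t$ need never be materialized.

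Concretely, I would process the tower left to right, maintaining $f_0^i$ together with its birth and death annotations, and perform one of the following constant-size updates for each elementary move $f_i$: for an inclusion adding a $k$-simplex $\sigma$, append a column encoding $\partial_k(\sigma)$ (with at most $k+1$ nonzero entries) and birth index $i$; for an inclusion adding a $(k-1)$-simplex, append a zero row with birth index $i$; for a collapse sending a $k$- or $(k-1)$-simplex to $0$, set the corresponding death index to $i+1$; for a merge of two $k$-simplices $\sigma, \tau$ with $\sigma$ older, perform the single column addition of $\tau$ into $\sigma$ on $f_0^i$ --- mirroring the one reduction needed to eliminate the duplicate column of $A_i$ --- and set the death index of the $\tau$-column to $i+1$; for a merge of two $(k-1)$-simplices, perform the analogous single row operation on $f_0^i$ (by the convention of Section~\ref{sec_presentation_algorithm}, a column addition on $B_i$ translates to a row addition on $f_0^i$) and update the death index. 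These updates coincide with those \textbf{PresPersMod} would perform, and since the new basis of $C_\bullet(K_{i+1})$ is just the set of simplices of $K_{i+1}$, no nontrivial row reductions on $A_i^r, B_i^r$ are required to prepare the next iteration.

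Correctness then follows from Theorem~\ref{thm:bar-from-matrices} applied to this degenerate input. For the complexity, the tower has $n$ elementary inclusions and at most $O(n)$ elementary collapses, so there are $O(n)$ moves in total. Each move either updates $O(1)$ birth/death entries, appends an $O(1)$-sparse column or row, or performs a single column/row addition on $f_0^i$, which has size $O(n)\times O(n)$ and hence costs $O(n)$. Summing yields the claimed $O(n^2)$ bound. The main point to verify is the merge case: even though the column or row of $f_0^i$ being modified may already carry $\Theta(n)$ fill-in accumulated from previous merges, a single addition still suffices per merge because the corresponding reduction of $A_i$ or $B_i$ needs only one step --- this is precisely what forces the quadratic rather than linear bound.
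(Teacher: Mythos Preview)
Your approach is the paper's: specialize \textbf{PresPersMod} to the boundary morphism, exploiting that each column of $C_k(f_i)$ has at most one nonzero entry so that every merge triggers exactly one column or row operation on $f_0$, and no explicit $A_i$, $B_i$ matrices or row reductions are needed. Two points should be corrected.

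First, the direction of the merge addition is reversed. With $\sigma$ older and $\tau$ younger, the left-to-right reduction of $A_i$ adds the $\sigma$-column to the $\tau$-column (zeroing the latter); the mirrored operation on $f_0^i$ must therefore also add the $\sigma$-column to the $\tau$-column, not ``$\tau$ into $\sigma$''. You correctly let $\tau$ die, but altering the $\sigma$-column would corrupt the boundary data of the surviving bar and is not the operation whose correctness Theorem~\ref{thm:bar-from-matrices} certifies.

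Second, your per-move cost bound does not hold. A single elementary vertex collapse $b\mapsto a$ may force every simplex containing $b$ to merge or collapse, so one move can trigger $\Theta(n)$ column or row additions on $f_0^i$, not one. The $O(n^2)$ total comes from amortization, as in the paper: each of the at most $n$ ever-inserted $k$-simplices (resp.\ $(k{-}1)$-simplices) can be merged into an older one at most once in its lifetime, so the total number of $O(n)$-cost column (resp.\ row) additions across all steps is $O(n)$.
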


\begin{proof}
We already proved that the algorithm in Section \ref{sec_presentation_algorithm} is correct. This algorithm builds a canonical presentation 
\begin{equation*}
\begin{tikzcd}
0 & \arrow[l] C_k(\Vec{K}) \arrow[d,"\partial_k"] & P_0 \arrow[d,"f_0"] \arrow[l] & P_1 \arrow[d,"f_1"] \arrow[l,swap,"p"] \\
0 & \arrow[l] C_{k\minus 1}(\Vec{K}) & Q_0 \arrow[l] & Q_1 \arrow[l,swap,"q"]
\end{tikzcd}
\end{equation*}
by processing the morphism of persistence modules \eqref{appendix_chain_modules} from left to right. In this process the algorithm reduces the matrices $C_k(f_i)$ while performing the corresponding operations on $f_0$. The simple structure of the matrices $C_k(f_i)$ allows us to simplify the algorithm. We distinguish two cases: 

First we assume that $f_i$ is an elementary inclusion. This means that at step $i+1$ a single simplex $\sigma$ is inserted into the tower. If $\sigma$ is neither a $k$- nor a $(k\minus 1)$-simplex then $C_k(f_i)$ and $C_{k\minus 1}(f_i)$ are just identity matrices. In this case our algorithm does nothing at this step.  If $\sigma$ is a $k$-simplex, then $C_{k\minus 1}(f_i)$ is an identity matrix and $C_k(f_i)$ is an identity matrix with one additional zero-row. Both matrices are already column- and row-reduced so our algorithm will not perform any operations here. The zero-row of the matrix $C_k(f_i)$ implies the birth of a new bar corresponding to the inserted simplex. The algorithm takes account of that by adding a new column $\partial_k^{i+1}(\sigma)$ to $f_0$ with $b(\text{new col})=i+1$. Similarly, if $\sigma$ is a $(k\minus 1)$-simplex, the algorithm adds a new zero-row to $f_0$ with $b(\text{new row})=i+1$. Note that we don't have to build or reduce any of the $C_k(f_i)$ matrices to execute this step we just have to add a new row or column to $f_0$ or do nothing.  

Now we assume that $f_i$ is an elementary collapse. This means that $f_i$ is surjective, $f_i(b)=a$ for some $b\neq a\in K_i^0$ and $f_i$ is injective on $K_i^0\setminus b$. If $\tau$ is a $k$-simplex in $K_{i+1}$, there are two cases: 1) $\tau$ is the image of exactly one $k$-simplex $\sigma$ in $K_i$. 2) $\tau$ is the image of exactly two $k$-simplices $\tau$ and $\sigma$ in $K_i$, i.e.\ $\sigma$ is merged into $\tau$ and $\tau$ is already in $K_i$. This can be seen in the following way: By surjectivity there exists a simplex $\sigma\in K_i^k$ such that $f(\sigma)=\tau$. If $\sigma=(v_0,\ldots,v_k)\neq (w_0,\ldots, w_k)=\kappa$ and $f(\sigma)=f(\kappa)=\tau$, there exists a $0\leq j\leq k$ such that $v_i=w_i$ for all $i\neq j$ and $v_j\neq w_j$ and $f(v_i)=f(w_i)$ for all $0\leq i \leq k$. This is only possible if either $v_j=a$ and $w_j=b$ or $v_j=b$ and $w_j=a$. Hence, there are at most two $k$-simplices in $K_i$ that map to $\tau$ and if there are two, then one of them is $\tau$ itself. This implies that every row of the matrix $C_k(f_i)$ has either one or two non-zero entries. Since every $k$-simplex can map to at most one other $k$-simplex, each column of $C_k(f_i)$ has at most one non-zero entry. If $\sigma$ is a $k$-simplex in $K_i$ there are two cases: 1) $\sigma$ is mapped to a $k$-simplex $\tau$, i.e.\ $C_k(f_i)_\sigma$ has a single non-zero entry in row $\tau$. 2) $\sigma$ is mapped to a $(k\minus 1)$-simplex, i.e.\ $C_k(f_i)_\sigma$ is a zero-column. To reduce $C_k(f_i)$ the algorithm has to add exactly one column to each column corresponding to a simplex that is merged into another simplex. Note that we have to add the column of the simplex which is born first to the one which is born later, i.e., if $\sigma$ and $\tau$ are merged and $b(\sigma)<b(\tau)$ we have to add the column of $\sigma$ to the column of $\tau$. After that the matrix is column- and row-reduced. For each zero-column, i.e.\ for each simplex that is merged into another simplex which is born before or mapped to a $(k\minus 1)$-simplex, the algorithm sets $d(col)=i+1$ for the corresponding column in $f_0$. As in the case of elementary inclusions, we don't have to construct or reduce the matrices $C_k(f_i)$ to execute the algorithm. We just have to add the column corresponding to $\sigma$ in $f_0$ to the column corresponding to $\tau$ in $f_0$ if $\tau$ and $\sigma$ are merged and $b(\sigma)<b(\tau)$ for all simplices that are merged and set $d(col)=i+1$ for each column corresponding to a simplex that is merged into a simplex born before or mapped to a $(k\minus 1)$-simplex. We can proceed in a similar way for the $(k\minus 1)$-simplices and the rows of $f_0$. 

Since there are $n$ elementary inclusions we add at most $n$ columns and rows to $f_0$. This implies that $f_0$ is of size $O(n)\times O(n)$. For each simplex that is merged into another simplex born before we have to add exactly one column or row in $f_0$. Such an addition takes $O(n)$ time. Since there are at most $n$ simplices and each simplex can be merged only once into a simplex born before, the overall cost of these operations is $O(n^2)$. 
\end{proof}
\cancel{
\begin{theorem}
Given a simplicial tower \eqref{appendix_application_tower} where each $f_i$ is an elementary inclusion or collapse. Let $n$ be the number of elementary inclusions, then, using the modified version of the algorithm of Section \ref{sec_presentation_algorithm} discussed in Proposition \ref{prop_modified_presentation_algo} and the algorithm of Section \ref{sec_cohomology_presentations}, we can compute the persistent homology barcode of the tower in $O(n^3)$ time.
\end{theorem}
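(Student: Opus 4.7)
The plan is to combine the simplified presentation-computing procedure of Proposition~\ref{prop_modified_presentation_algo} with the homology-of-complex-of-presentations algorithm of Theorem~\ref{thm:perscohom}, checking along the way that the presentations produced actually assemble into a \emph{complex} of presentations (not merely a complex on the module level), so that no repair step of Section~\ref{sec_comp_presentations} is needed.

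First, I would invoke Proposition~\ref{prop_modified_presentation_algo} on each of the three boundary morphisms appearing in \eqref{tower_complex} to obtain, in $O(n^2)$ time each, canonical presentations
\begin{equation*}
\begin{tikzcd}
0 & C_{j}(\Vec{K}) \arrow[l] \arrow[d,"\partial_{j}"] & P_0^j \arrow[l] \arrow[d,"f_0^j"] & P_1^j \arrow[l,swap,"p^j"] \arrow[d,"f_1^j"] \\
0 & C_{j\minus 1}(\Vec{K}) \arrow[l] & P_0^{j\minus 1} \arrow[l] & P_1^{j\minus 1} \arrow[l,swap,"p^{j\minus 1}"]
\end{tikzcd}
\end{equation*}
for $j\in\{k+1,k\}$. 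Each row and column of these matrices corresponds to a bar of some $C_\ell(\Vec{K})$, where every bar $[a,b)$ is supported by a single simplex $\sigma$ whose life begins at $a$ (insertion of $\sigma$) and terminates at $b$ (collapse of $\sigma$ onto a lower-dimensional simplex, or merge with a previously-born simplex). Since the three matrices produced have size $O(n)\times O(n)$, the whole preprocessing step takes $O(n^2)$ time.

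The key step is to argue that these three stacked presentations already constitute a complex of presentations, i.e.\ $f_0^{j\minus 1}\circ f_0^{j}=0$ and $f_1^{j\minus 1}\circ f_1^{j}=0$ as morphisms of free modules (not merely after quotienting). By Proposition~\ref{lemma1}, the only obstruction to this is the existence of an interval summand $\mathbb{I}_{[a,b)}$ of $C_j(\Vec{K})$ and an interval summand $\mathbb{I}_{[c,d)}$ of $C_{j\minus 2}(\Vec{K})$ with $c<d\leq a<b$ linked by a nonzero composed morphism. But each such interval is the bar of a single simplex, and the boundary of a simplex $\sigma$ (the bar at level $j-2$ coming from a face of $\sigma$) cannot die before $\sigma$ itself: whenever a collapse kills $\sigma$, either the bar is routed to a surviving simplex of the same dimension, or it maps to zero, and in every case the receiving face still exists at time $b$. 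Hence $d\geq b > a$, contradicting $d\leq a$. So the problematic case never arises, and the three presentations form a bona fide complex of presentations without any modification.

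Finally, I would feed this complex of presentations into the algorithm \textbf{PresHom} of Section~\ref{sec_cohomology_presentations}. By Theorem~\ref{thm:perscohom}, this computes the barcode of $\ker\partial_k/\im\partial_{k+1}=H_k(\Vec{K})$ in time $O(n^3)$, since all the input matrices have size $O(n)\times O(n)$. Adding the $O(n^2)$ cost of the preprocessing yields the claimed overall bound of $O(n^3)$. The main delicate point is the complex-of-presentations argument: once one verifies that the simplex-based birth/death pairing forbids the order $c<d\leq a<b$, everything else is a direct application of the two previously established algorithms.
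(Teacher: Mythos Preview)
Your proof is correct and follows essentially the same approach as the paper: invoke Proposition~\ref{prop_modified_presentation_algo} to get $O(n)\times O(n)$ presentations in $O(n^2)$ time, then apply \textbf{PresHom} from Section~\ref{sec_cohomology_presentations} in $O(n^3)$ time. The paper's appendix proof is terser and simply cites ``the methods developed in Sections~\ref{sec_cohomology_presentations} and~\ref{sec_comp_presentations}'' (thus formally allowing the repair step, which is $O(n^3)$ anyway), whereas you spell out the argument from Section~\ref{sec_application_tower} that the repair is unnecessary via Proposition~\ref{lemma1}; your justification of the inequality $d>a$ is slightly loosely phrased (the bars at level $j-2$ arise from faces of faces, and the cleanest reason the offending configuration $c<d\leq a<b$ cannot occur is that when a $j$-simplex $\sigma$ is inserted at time $a$, its boundary column in $f_0$ is written in the basis of simplices alive at time $a$, forcing every contributing lower bar to have death time $>a$), but the conclusion matches the paper's claim.
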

}

\begin{proof}[Proof of Theorem~\ref{thm:compute-tower}]
As shown in Proposition \ref{prop_modified_presentation_algo}, we can compute a presentation of the complex of persistence modules
\begin{equation} \label{appendix_simp_chain_comp_modules}
\begin{tikzcd}
C_{k+1}(\Vec{K}) \arrow[r,"\partial_{k+1}"] & C_k(\Vec{K}) \arrow[r,"\partial_{k}"] & C_{k\minus 1}(\Vec{K})
\end{tikzcd}
\end{equation}
in $O(n^2)$ time. The proposition also shows that the matrices in this presentation are of size $O(n)\times O(n)$. The methods developed in Sections  \ref{sec_cohomology_presentations} and \ref{sec_comp_presentations} now allow us to compute the homology of \eqref{appendix_simp_chain_comp_modules} in $O(n^3)$ time. 
\end{proof}


\section{Proofs in Section \ref{sec_applications_cosheaf_tower}} \label{sec_appendix_cosheaf_tower}

\begin{proposition} \label{prop_modified_presentation_algo_cosheaf}
Given a simplicial tower $\Vec{K}$ and a cosheaf $F$ on $K_m$
\begin{equation} \label{appendix_application_cosheaf_tower}
\begin{tikzcd}
F^0 & F^1 \arrow[l,swap,mapsto,"f_{0}^*"] & \cdots \arrow[l,swap,mapsto,"f_{1}^*"] & F^m \arrow[l,swap,mapsto,"f_{m\minus 1}^*"] \\[-13pt]
K_0 \arrow[r,"f_0"] & K_1 \arrow[r,"f_1"] & \cdots \arrow[r,"f_{m\minus 1}"] & K_m
\end{tikzcd}
\end{equation}
where each $f_i$ is an elementary inclusion or collapse. Let $n=\text{\# elementary inclusions}\times \underset{\sigma\in K_m}{\text{max}}\text{dim}\big(F(\sigma)\big)$, then, using a modified version of the algorithm in Section \ref{sec_presentation_algorithm}, we can compute a presentation of 
\begin{equation*} \label{appendix_chain_modules_cosheaf}
\begin{tikzcd}[column sep=large]
C_k(\Vec{K},F): \arrow[d,"\partial_k"] &[-40pt] C_k(K_0,F^0) \arrow[r,"C_k(f_0)"] \arrow[d,"\partial_k^0"] &[-3pt] C_k(K_1,F^1) \arrow[r,"C_k(f_1)"] \arrow[d,"\partial_k^1"] &[-3pt] \cdots \arrow[r,"C_k(f_{m\minus 1})"] &[8pt] C_k(K_m,F^m) \arrow[d,"\partial_k^{m}"] \\
C_{k\minus 1}(\Vec{K},F): & C_{k\minus 1}(K_0,F^0) \arrow[r,"C_{k\minus 1}(f_0)"] & C_{k\minus 1}(K_1,F^1) \arrow[r,"C_{k\minus 1}(f_1)"] & \cdots \arrow[r,"C_{k\minus 1}(f_{m\minus 1})"] & C_{k\minus 1}(K_m,F^m)
\end{tikzcd}
\end{equation*}
in $O(n^2)$ time.
\end{proposition}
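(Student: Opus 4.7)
The plan is to adapt the argument of Proposition \ref{prop_modified_presentation_algo} to the cosheaf setting by replacing each simplex by a block of dimension $\dim F^i(\sigma)$ and observing that the pullback definition of $F^i$ makes all the transition blocks in $C_k(f_i)$ either zero or identity. Concretely, since $F^i = f_i^* F^{i+1}$, we have $F^i(\sigma) = F^{i+1}(f_i(\sigma))$ for every $\sigma\in K_i$, so the block of $C_k(f_i)$ indexed by $\sigma\in K_i^k$ and $\tau\in K_{i+1}^k$ is either the identity (when $f_i(\sigma)=\tau$) or zero. Consequently, after fixing compatible bases block by block, each column of $C_k(f_i)$ still has at most one nonzero entry, which is $1$, exactly as in the plain tower case. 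This is the key structural fact that permits the same simplification of the generic algorithm of Section \ref{sec_presentation_algorithm}.

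The proof then proceeds case by case on the elementary moves, mirroring Proposition \ref{prop_modified_presentation_algo}. If $f_i$ is an elementary inclusion introducing a single simplex $\sigma$, then $C_j(f_i)$ for $j\neq \dim\sigma$ are identity matrices, while for $j=\dim\sigma$ it is an identity augmented with $\dim F^{i+1}(\sigma)$ zero rows. We therefore skip the explicit matrix reduction and instead append, to $f_0$, exactly $\dim F^{i+1}(\sigma)\le d$ new columns (if $\dim\sigma=k$, using the boundary blocks of $\partial_k^{i+1}$ restricted to $F^{i+1}(\sigma)$) or zero rows (if $\dim\sigma=k-1$), each tagged with birth index $i+1$. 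If $f_i$ is an elementary collapse $b\mapsto a$, then as in the plain case each $k$-simplex $\tau\in K_{i+1}$ has at most two preimages, and when two exist they are $\tau$ itself and another simplex $\sigma$ with $b(\sigma)>b(\tau)$ not possible — so we add the block of columns of $\sigma$ to the block of columns of $\tau$ in $f_0$ (choosing the earlier-born one as target), and set $d(\mathrm{col})=i+1$ on every column block whose corresponding $k$-simplex either collapses to a lower-dimensional simplex or merges into an earlier-born $k$-simplex. The same recipe applies to $(k-1)$-simplices and the rows of $f_0$. By Proposition \ref{lemma1} and the fact that boundaries of cosheaf chains cannot outlive the chains themselves, this gives a valid canonical presentation.

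For the complexity count, every elementary inclusion contributes at most $d$ columns or rows to $f_0$, so $f_0$ has size $O(n)\times O(n)$ where $n = (\text{number of elementary inclusions})\times d$. Every merging of two $k$-simplices in an elementary collapse triggers $\dim F(\sigma)\le d$ column or row additions in $f_0$, each costing $O(n)$. Since each simplex can be merged into an earlier-born simplex at most once and the total number of simplex blocks is $O(n)$, the overall count of block additions is $O(n)$, for a total cost of $O(n^2)$. The remaining bookkeeping (setting birth and death indices, appending new columns/rows for elementary inclusions) is clearly $O(n^2)$ as well.

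The main obstacle is purely notational: one must carefully maintain the block structure indexed by simplices so that the basis used on $C_k(K_i,F^i)$ at step $i$ restricts correctly to the basis on each $F^i(\sigma)$, and verify that the pullback identification $F^i(\sigma)\cong F^{i+1}(f_i(\sigma))$ is compatible with the bases produced by the algorithm as it processes consecutive steps. Once this is set up, the argument is a block-wise reprise of Proposition \ref{prop_modified_presentation_algo}, and the correctness follows from the correctness of the generic algorithm of Section \ref{sec_presentation_algorithm} applied to the morphism of persistence modules in \eqref{appendix_chain_modules_cosheaf}.
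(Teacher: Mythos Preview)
Your proposal is correct and takes essentially the same approach as the paper: both observe that the transition matrices $C_k(f_i)$ have the same block structure as in the plain tower case (Proposition~\ref{prop_modified_presentation_algo}) with single $1$ entries replaced by identity blocks of size at most $d=\max_\sigma\dim F(\sigma)$, and then reprise the inclusion/collapse case analysis and complexity count block-wise. One small wording slip to fix: in the merge step you write ``choosing the earlier-born one as target,'' but as in the tower proof the earlier-born column block is added \emph{to} the later-born one (so the later-born block becomes zero and gets its death index set), and the phrase ``$b(\sigma)>b(\tau)$ not possible'' appears to be a stray fragment.
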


\begin{proof}
The modification of the algorithm of Section \ref{sec_presentation_algorithm} and the proof is analogous to the proof of Proposition \ref{prop_modified_presentation_algo}. As defined in \eqref{induced_maps_cochain}, the matrices $C_k(f_i)$ viewed as block matrices have the same structure as the ones in the tower case. The only difference is that a single $1$ entry is replaced by an identity block. Since, a single simplex can contribute at most $\underset{\sigma\in K_m}{\text{max}}\text{dim}\big(F(\sigma)\big)$ columns or rows we obtain the claimed complexity.   
\end{proof}

\cancel{
\begin{theorem}
Given an instance $(\Vec{K},F)$ as in \eqref{appendix_application_cosheaf_tower} where each $f_i$ is an elementary inclusion or collapse and $n=\text{\# elementary inclusions}\cdot \underset{\sigma\in K_m}{\text{max}}\text{dim}\big(F(\sigma)\big)$, we can compute the persistent cosheaf homology barcode of \eqref{appendix_application_cosheaf_tower} in $O(n^3)$ time. 
\end{theorem}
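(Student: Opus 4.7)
The plan is to mirror the proof strategy used for Theorem~\ref{thm:compute-tower} in the pure tower case, while accounting for the stalk dimensions of the cosheaf. First, I would invoke Proposition~\ref{prop_modified_presentation_algo_cosheaf} to construct canonical presentations of the three persistence modules $C_{k+1}(\Vec{K},F)$, $C_k(\Vec{K},F)$ and $C_{k-1}(\Vec{K},F)$ together with presentations of the boundary morphisms $\partial_{k+1}$ and $\partial_k$, all in time $O(n^2)$. The proposition also guarantees that the associated matrices $f_0, f_1, g_0, g_1, p, q, r$ have size $O(n) \times O(n)$, with $n$ being the total cosheaf-weighted count of elementary inclusions as in the statement.

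Next, I would feed the two obtained morphisms of presentations for $\partial_{k+1}$ and $\partial_k$ into the pipeline from Section~\ref{sec_comp_presentations}: by Corollary~\ref{cor:preserve-rank}, we can in $O(n^3)$ time modify these canonical presentations to obtain a genuine complex of presentations whose sizes remain $O(n)$. This is the step that ensures the algebraic compatibility condition $g_0 \circ f_0 = 0$ and $g_1 \circ f_1 = 0$ required as input to \textbf{PresHom}, without blowing up the matrix dimensions.

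Finally, I would apply Theorem~\ref{thm:perscohom}: running \textbf{PresHom} on the resulting complex of presentations computes the barcode of $\ker\partial_k / \im\partial_{k+1}$ in $O(n^3)$ time. Since this homology is exactly the persistent cosheaf homology $H_k(\Vec{K},F)$ of \eqref{tower_cosheaf_complex_homology} (by the definitions set up in Section~\ref{sec_applications_cosheaf_tower} and Appendix~\ref{app_cosheaves}), this yields the desired barcode.

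Adding the three phases together ($O(n^2)$ for the presentation algorithm, $O(n^3)$ for the complex-of-presentations repair, and $O(n^3)$ for \textbf{PresHom}) gives the total runtime $O(n^3)$. The only slightly delicate point — and the main thing one needs to double-check — is that the notion of size $n$ used in Proposition~\ref{prop_modified_presentation_algo_cosheaf} (which already bakes in the factor $\max_{\sigma}\dim F(\sigma)$) correctly bounds the dimensions of the presentation matrices entering the subsequent two phases; this is where the choice of $n$ in the theorem statement is calibrated precisely so that the tower-style cubic bound propagates unchanged through the rest of the pipeline.
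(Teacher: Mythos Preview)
Your proposal is correct and follows essentially the same approach as the paper's proof: invoke Proposition~\ref{prop_modified_presentation_algo_cosheaf} for the $O(n^2)$ presentation step with $O(n)\times O(n)$ matrices, then appeal to the machinery of Sections~\ref{sec_cohomology_presentations} and~\ref{sec_comp_presentations} (Corollary~\ref{cor:preserve-rank} and Theorem~\ref{thm:perscohom}) for the $O(n^3)$ homology computation. If anything, you are more explicit than the paper in separating out the complex-of-presentations repair step and naming the specific results used, but the logical structure is identical.
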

}

\begin{proof}[Proof of Theorem~\ref{thm:compute-sheaf}]
As shown in Proposition \ref{prop_modified_presentation_algo_cosheaf}, we can compute a presentation of the complex of persistence modules
\begin{equation} \label{appendix_simp_chain_comp_modules_cosheaf}
\begin{tikzcd}
C_{k+1}(\Vec{K},F) \arrow[r,"\partial_{k+1}"] & C_k(\Vec{K},F) \arrow[r,"\partial_{k}"] & C_{k\minus 1}(\Vec{K},F)
\end{tikzcd}
\end{equation}
in $O(n^2)$ time. The proposition also shows that the matrices in this presentation are of size $O(n)\times O(n)$. The methods developed in Sections  \ref{sec_cohomology_presentations} and \ref{sec_comp_presentations} now allow us to compute the homology of \eqref{appendix_simp_chain_comp_modules_cosheaf} in $O(n^3)$ time. 
\end{proof}

\cancel{
\section{Proofs Section \ref{sec_pers_shv_simp}} \label{sec_appendix_pers_shv_simp}

\begin{theorem}
Given a persistent sheaf of finite type 
\begin{equation}
\begin{tikzcd}
F_0 \arrow[r,"\phi_0"] & F_1 \arrow[r,"\phi_1"] & \cdots \arrow[r,"\phi_{m\minus 1}"] & F_m \arrow[r,"\cong"] & \cdots
\end{tikzcd}
\end{equation}
\tamal{over a simplicial complex $X$, of size $n$ as defined in Section \ref{sec_pers_shv_simp}, the procedure} described in Section \ref{sec_pers_shv_simp}, we can compute the persistent sheaf cohomology barcode in $O(n^3)$ time.   
\end{theorem}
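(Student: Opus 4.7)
The plan is to prove correctness by induction on the index $i$ and then bound the running time by summing the work done across iterations. For correctness, I would maintain as the inductive invariant that after processing index $i$, the matrix $f_0^i$ together with its birth/death annotations encodes a canonical presentation of the morphism of persistence modules obtained by restricting \eqref{morphism_persistence_modules} up to index $i$ and then extending by identity, and moreover that the stored matrices $A_i^t$, $B_i^t$, $C_i^t$ are the representations of $M(i \leq i+1)$, $N(i \leq i+1)$, and $\phi(i+1)$ with respect to the bases of $M_i$ and $N_i$ that are induced by the current presentation.

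The base case $i=0$ is immediate: every basis element of $M_0$ and $N_0$ must start a new bar (there is no prior index), so setting $f_0^0 := C_0$ with all $b(\cdot) = 0$, $d(\cdot) = \infty$ and leaving $P_1^0 = Q_1^0 = 0$ directly yields the canonical presentation of $C_0 \colon M_0 \to N_0$ extended by identities. For the inductive step, I would argue in two parts. First, the column reductions of $A_i^t$ and $B_i^t$, together with the simultaneous column/row operations on $f_0^i$, realize a change of basis on $P_0^i$ and $Q_0^i$; since all generators alive at index $i$ have $d(\cdot) = \infty$ (by the invariant), no relations are affected and the resulting $\overline{f}_0^i$ still presents the same morphism up to isomorphism. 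Second, after the reduction, a zero column of $A_i^r$ or $B_i^r$ exactly identifies a generator whose bar does not extend to $i+1$ (its image is zero in $M_{i+1}$ or $N_{i+1}$), justifying $d(\cdot) = i+1$, while non-pivot rows generate the cokernel, so the corresponding columns/rows of $C_{i+1}^t$ record the action of $\phi$ on the newly born generators. Finally, the implicit row reduction on $A_i^r$, $B_i^r$ — which we avoid performing explicitly — is accounted for by doing the dual column operations on $A_{i+1}$, $B_{i+1}$ and the corresponding column/row operations on $C_{i+1}$, which is exactly what maintains the invariant for the next step.

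For the complexity bound, at iteration $i$ the matrices $A_i^t$, $B_i^t$ have size $O(n_{i+1}) \times O(n_i)$ and their column reduction costs $O(n_i^2 \cdot n_{i+1})$; the mirrored operations on the size-$O(n) \times O(n)$ matrix $f_0^i$ cost $O(n_i^2 \cdot n)$ because we perform at most $O(n_i^2)$ such operations, each on columns/rows of length $O(n)$. Similarly, the basis transformation of $A_{i+1}$, $B_{i+1}$, $C_{i+1}$ costs $O(n_{i+1}^2 \cdot n)$. Summing,
\[
\sum_{i=0}^{m-1} \bigl( O(n_i^2 \cdot n) + O(n_{i+1}^2 \cdot n) \bigr) = O(n) \cdot \sum_{i=0}^{m} O(n_i^2) = O(n^3),
\]
using $\sum_i n_i^2 \leq (\sum_i n_i)^2 = n^2$.

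The main obstacle will be the bookkeeping in the inductive step: we must simultaneously argue that (a) the basis changes applied during the reduction of $A_i^t$ and $B_i^t$ are compatible with the existing canonical structure of $p^i$ and $q^i$ (so that the presentation remains canonical after the operations on $f_0^i$), and (b) the rule of skipping the row reduction on $A_i^r$, $B_i^r$ and instead transforming $A_{i+1}$, $B_{i+1}$, $C_{i+1}$ correctly captures the new basis on $M_{i+1}$, $N_{i+1}$ induced by the cokernel generators. Formalizing the isomorphism between the inductively constructed presentation and the restricted-then-extended morphism, and carefully identifying the new summands $P_0^{i+1} \cong \overline{P}_0^i \oplus \operatorname{span}\{\text{non-pivot rows of } A_i^r\}$ (analogously for $Q_0^{i+1}$, $P_1^{i+1}$, $Q_1^{i+1}$), is the delicate part of the argument.
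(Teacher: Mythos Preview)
Your proposal proves the wrong statement. What you have written is essentially the proof of Theorem~\ref{thm:bar-from-matrices} (correctness and $O(n^3)$ complexity of the \textbf{PresPersMod} algorithm from Section~\ref{sec_presentation_algorithm}): the inductive invariant on $f_0^i$, the treatment of zero columns and non-pivot rows, and the per-iteration cost $O(n_i^2\cdot n)+O(n_{i+1}^2\cdot n)$ summing to $O(n^3)$ all match that argument closely. But the theorem you were asked to prove concerns the full \textbf{PersistentSheaf} procedure of Section~\ref{sec_pers_shv_simp}, which has three stages: (i) compute a presentation of the complex $C^{k-1}(K,\Vec{F})\to C^k(K,\Vec{F})\to C^{k+1}(K,\Vec{F})$ via \textbf{PresPersMod}, (ii) modify it into a complex of presentations as in Section~\ref{sec_comp_presentations}, and (iii) compute the cohomology barcode via \textbf{PresHom} from Section~\ref{sec_cohomology_presentations}.

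The paper's own proof is a single sentence: it invokes the already-established Theorems~\ref{thm:bar-from-matrices}, \ref{thm:perscohom} and Corollary~\ref{cor:preserve-rank} (with its $O(n^3)$ modification step). Your argument covers only stage~(i). You never address why the presentations assembled from \textbf{PresPersMod} can be made into a \emph{complex} of presentations (this is the content of Section~\ref{sec_comp_presentations} and is not automatic, cf.\ example~\eqref{non_complex}), nor why the homology of that complex of presentations recovers the barcode of $H^k(K,\Vec{F})$ (Theorem~\ref{thm_cohomology_presentation} and Theorem~\ref{thm:perscohom}). To fix your proof, either replace it with the one-line citation to those results, or add the missing two stages explicitly.
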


\begin{proof}
This follows directly from the results and algorithms of Section \ref{sec_presentation_algorithm}, \ref{sec_comp_presentations} and \ref{sec_cohomology_presentations}.
\end{proof}
}


\section{Proofs in Section \ref{sec_poset}} \label{app_poset}

\subsection{Computation by order complex} \label{app_order_complex}

\begin{proposition} \label{fullyfaithful}
Let $X$ be a finite poset and $f\colon \mathcal{K}(X)\rightarrow X$ the projection from the order complex, then $f^*$ is fully faithful. 
\end{proposition}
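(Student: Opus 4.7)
The plan is to unpack the definition of $f^*$ on morphisms and then check that the induced map $\mathrm{Hom}(F,G) \to \mathrm{Hom}(f^*F, f^*G)$ is both injective and surjective, exploiting the fact that every singleton $\{x\}$ appears as a $0$-simplex of $\mathcal{K}(X)$ with $f(\{x\}) = x$.

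For faithfulness I would argue as follows. A morphism $\phi\colon F \to G$ of sheaves on $X$ is uniquely determined by its components $\phi(x)$ for $x \in X$. For any $0$-simplex $\{x\} \in \mathcal{K}(X)$, by the definition of the pullback one has $f^*F(\{x\}) = F(x)$, $f^*G(\{x\}) = G(x)$, and $(f^*\phi)(\{x\}) = \phi(x)$. Hence the components of $\phi$ can be read off from $f^*\phi$, and $f^*\phi = f^*\psi$ forces $\phi = \psi$.

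For fullness, given $\psi\colon f^*F \to f^*G$, I would define a candidate morphism on $X$ by $\phi(x) := \psi(\{x\})$ and show it is natural. Take a relation $x < y$ in $X$ and consider the three simplices $\{x\}, \{y\}, \{x,y\} \in \mathcal{K}(X)$ with face relations $\{x\} \leq \{x,y\}$ and $\{y\} \leq \{x,y\}$. Under $f$ these map to $x, y, y$ respectively, so by the definition of $f^*$, the transition maps of $f^*F$ along $\{x\} \leq \{x,y\}$ and $\{y\} \leq \{x,y\}$ are $F(x \leq y)$ and $\mathrm{id}_{F(y)}$, and similarly for $G$. Naturality of $\psi$ on $\{y\} \leq \{x,y\}$ then gives $\psi(\{x,y\}) = \psi(\{y\})$, and naturality on $\{x\} \leq \{x,y\}$ gives $\psi(\{x,y\}) \circ F(x \leq y) = G(x \leq y) \circ \psi(\{x\})$. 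Combining, $\phi(y) \circ F(x \leq y) = G(x \leq y) \circ \phi(x)$, which is exactly naturality of $\phi$.

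It remains to verify $f^*\phi = \psi$ on every simplex, not just $0$-simplices. For a chain $x_\bullet = (x_0 < \cdots < x_k)$, the face inclusion $\{x_k\} \leq x_\bullet$ is sent by $f$ to the identity on $x_k$, so naturality of $\psi$ (applied to this relation, where both transition maps become identities) yields $\psi(x_\bullet) = \psi(\{x_k\}) = \phi(x_k) = (f^*\phi)(x_\bullet)$. This completes fullness. The main subtlety is not any single computation but keeping the variance conventions straight: the simplices in $\mathcal{K}(X)$ are ordered by face inclusion, while the pullback along $f$ uses the covariant $f$ sending a chain to its \emph{top} element, so every face inclusion $\sigma \leq \tau$ with $f(\sigma) = f(\tau)$ collapses to an identity in $f^*F$ — and this is precisely the mechanism that forces $\psi$ to be constant along fibers of $f$, which is what allows $\psi$ to descend to a morphism on $X$.
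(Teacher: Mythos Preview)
Your proof is correct and follows essentially the same approach as the paper: both use the $0$-simplices $\{x\}$ to recover the components of a morphism (giving faithfulness), and for fullness both define the candidate $\phi(x)\coloneqq\psi(\{x\})$, use the chain $\{x\}\leq\{x,y\}\geq\{y\}$ to verify naturality, and use the inclusion $\{x_k\}\leq x_\bullet$ (which $f$ sends to an identity) to show $\psi$ is constant along fibers and hence equals $f^*\phi$ on every simplex. The only difference is cosmetic ordering: the paper first establishes $\psi(x_\bullet)=\psi(\{x_k\})$ and then checks naturality, while you do it the other way around.
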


\begin{proof}
Let $F$ and $G$ be sheaves on $X$ and $H_{F,G}\colon \text{Hom}(F,G)\rightarrow \text{Hom}(f^*F,f^*G)$ the map defined by $H_{F,G}(\phi)\coloneqq f^*\phi$. Let $\phi,\psi\in \text{Hom}(F,G)$ such that $f^*\phi=f^*\psi$. Let $x\in X$, then $(x)\in \mathcal{K}(X)$ and $f\big((x)\big)=x$. Hence, $\phi(x)=\phi\big(f\big((x)\big)\big)=f^*\phi\big((x)\big)=f^*\psi\big((x)\big)=\psi\big(f\big((x)\big)\big)=\psi(x)$. This implies $H_{F,G}$ is injective. Let $\phi\in \text{Hom}(f^*F,f^*G)$. For every $x\in X$ define $\psi(x)\coloneqq \phi\big((x)\big)$. Note that, since $\phi$ is a sheaf morphism and $f^*F\big((x_0<\cdots<x_n)\big)=F(x_n)$, we obtain the following commutative diagram for every $(x_0<\cdots <x_n)\in\mathcal{K}(X)$
\begin{equation*}
\begin{tikzcd}[column sep=huge,row sep=large]
f^*F\big((x_n)\big) \arrow[r,"\phi\big((x_n)\big)"] \arrow[d,swap,"\text{id}"] &[15pt] f^*G\big((x_n)\big) \arrow[d,"\text{id}"] \\
f^*F\big((x_0<\cdots<x_n)\big) \arrow[r,"\phi\big((x_0<\cdots<x_n)\big)"] & f^*G\big((x_0<\cdots<x_n)\big)
\end{tikzcd}
\end{equation*}
Hence, $\phi\big((x_0<\cdots<x_n)\big)=\phi\big((x_n)\big)$. If $x<y\in X$, we obtain $(x)\leq (x<y)\in\mathcal{K}(X)$ and the following commutative diagram
\begin{equation*}
\begin{tikzcd}[column sep=large,row sep=large]
F(x) \arrow[r,"\psi(x)"] \arrow[d,swap,"\text{id}"] & G(x) \arrow[d,"\text{id}"] \\
f^*F\big((x)\big) \arrow[r,"\phi\big((x)\big)"] \arrow[d,swap,"f^*F\big((x)\leq (x<y)\big)"] & f^*G\big((x)\big) \arrow[d,"f^*G\big((x)\leq (x<y)\big)"] \\
f^*F\big((x<y)\big) \arrow[r,"\phi\big((x<y)\big)"] \arrow[d,swap,"\text{id}"] & f^*G\big((x<y)\big) \arrow[d,"\text{id}"] \\
F(y) \arrow[r,"\psi(y)"] & G(y)
\end{tikzcd}
\end{equation*}
Since, $f^*F\big((x)\leq (x<y)\big)=F(x<y)$, the morphism $\psi\colon F\rightarrow G$ is well defined. Moreover, for $(x_0<\cdots<x_n)\in\mathcal{K}(X)$ we obtain $f^*\psi\big((x_0<\cdots<x_n)\big)=\psi(x_n)=\phi(x_n)=\phi\big((x_0<\cdots<x_n)\big)$. Therefore, $H_{F,G}(\psi)=f^*\psi=\phi$ and $H_{F,G}$ is surjective. This implies that $f^*$ is fully faithful.
\end{proof}

\begin{proposition} \label{equivalence}
Let $X$ be a finite poset and $f\colon \mathcal{K}(X)\rightarrow X$ the projection from the order complex, then the unit $\eta\colon \text{id}\rightarrow f_*f^*$ of the adjunction $f^*\dashv f_*$ is an isomorphism.
\end{proposition}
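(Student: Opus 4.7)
The plan is to invoke a standard categorical fact: given any adjunction $L \dashv R$ with unit $\eta\colon \text{id} \rightarrow RL$, the left adjoint $L$ is fully faithful if and only if $\eta$ is a natural isomorphism. Applied to our setting, where $f^*$ is the left adjoint of $f_*$ and is fully faithful by Proposition \ref{fullyfaithful}, this immediately yields that $\eta\colon \text{id} \rightarrow f_*f^*$ is a natural isomorphism. This route keeps the proof very short by leveraging the hands-on work already done for Proposition \ref{fullyfaithful}.

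To make the argument self-contained, I would also sketch a direct verification. For a sheaf $G$ on $\mathcal{K}(X)$ (with values in $\mathbf{vec}$), the pushforward can be written as $f_*G(x) = \lim_{\sigma \in S_x} G(\sigma)$ where $S_x \coloneqq \{\sigma = (x_0<\cdots<x_k) \in \mathcal{K}(X) : x_k \geq x\}$, and this limit is finite because $\mathcal{K}(X)$ is. Specializing to $G = f^*F$, since $f^*F(\sigma) = F(x_k)$, we get $f_*f^*F(x) = \lim_{\sigma \in S_x} F(x_k)$, and the unit $\eta_F(x)\colon F(x) \rightarrow f_*f^*F(x)$ is the universal map induced by the compatible family $\{F(x\leq x_k)\}_{\sigma \in S_x}$.

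The inverse to $\eta_F(x)$ will be the projection onto the component indexed by the singleton chain $(x) \in S_x$, which equals $F(x)$. One composition is clearly $F(x \leq x) = \text{id}_{F(x)}$. For the other composition, given any chain $\sigma = (x_0 < \cdots < x_k) \in S_x$, I would insert $x$ into $\sigma$ at the appropriate position (or take $\tau = \sigma$ if $x$ is already present in $\sigma$) to obtain a chain $\tau \in S_x$ containing both $(x)$ and $\sigma$ as faces. Compatibility of the family along the two face inclusions $(x) \leq \tau$ and $\sigma \leq \tau$ then forces the $\sigma$-component to equal $F(x \leq x_k)$ applied to the $(x)$-component, showing that the projection recovers the whole compatible family. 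The main care point will be checking that this insertion always produces a valid totally ordered chain (which splits into cases $x \leq x_0$, $x_j < x < x_{j+1}$, and $x = x_j$ for some $j$, all of which are routine).
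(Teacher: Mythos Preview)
Your primary argument—invoking the standard categorical fact that a left adjoint is fully faithful if and only if the unit is a natural isomorphism, together with Proposition~\ref{fullyfaithful}—is exactly the paper's proof.

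A small caveat on your supplementary direct verification: the step ``insert $x$ into $\sigma$'' does not always yield a chain. For $\sigma=(x_0<\cdots<x_k)\in S_x$ you only know $x\leq x_k$; the element $x$ may be incomparable to some $x_i$ with $i<k$ (e.g.\ in the poset $\{a,b,c\}$ with $a<c$, $b<c$ and $a,b$ incomparable, take $x=a$ and $\sigma=(b<c)$). The fix is immediate: instead of inserting, use the zigzag $(x)\leq (x<x_k)\geq (x_k)\leq \sigma$ in $S_x$ (or $(x)=(x_k)\leq\sigma$ when $x=x_k$). Compatibility along these faces gives $a_\sigma=a_{(x_k)}=a_{(x<x_k)}=F(x\leq x_k)(a_{(x)})$, which is what you need.
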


\begin{proof}
The functor $f^*$ is left adjoint to $f_*$. By \cite[Proposition 3.4]{nlab:adjoint_functor},
given adjoint functors $f^*\dashv f_*$, the unit $\eta\colon \text{id}\rightarrow f_*f^*$ of the adjunction is an isomorphism if and only if $f^*$ is fully faithful. By Proposition \ref{fullyfaithful}, $f^*$ is fully faithful.
\end{proof}

\begin{proposition} \label{order_proj_iso}
Let $X$ be a finite poset, $f\colon \mathcal{K}(X)\rightarrow X$ the projection from the order complex and $F$ a sheaf on $X$, then the induced morphism $H^k(f)\colon H^k(X,F)\rightarrow H^k(\mathcal{K}(X),f^*F)$ is an isomorphism for all $k\geq 0$.
\end{proposition}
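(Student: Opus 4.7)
The plan is to show that $H^k(X, F)$ and $H^k(\mathcal{K}(X), f^*F)$ are computed by the same cochain complex. Consider the canonical cover $\mathcal{U} = \{U_y : y \in X\}$ of $X$ by minimal opens $U_y = \{z \in X : z \geq y\}$ in the Alexandrov topology. Each $U_y$ has $y$ as minimum and is therefore acyclic for every sheaf, so $\mathcal{U}$ is an $F$-acyclic cover and Leray's theorem gives $\check H^*(\mathcal{U}, F) \cong H^*(X, F)$.

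The key observation is that the alternating \v{C}ech complex of $\mathcal{U}$ with values in $F$ coincides with the simplicial cochain complex $C^\bullet(\mathcal{K}(X), f^*F)$ of \eqref{cochain_complex_def}. Indeed, for every strict chain $x_0 < \cdots < x_k$ in $X$ we have $U_{x_0} \cap \cdots \cap U_{x_k} = U_{x_k}$, and since $x_k$ is the minimum of $U_{x_k}$ the sections satisfy $F(U_{x_k}) = F(x_k) = f^*F(x_0 < \cdots < x_k)$. Because the simplices of $\mathcal{K}(X)$ are precisely strict chains in $X$, the degree-$k$ terms agree term-by-term. A direct comparison of the two differentials, namely the \v{C}ech coboundary arising from face inclusions $U_{x_0, \dots, \hat x_i, \dots, x_k} \supseteq U_{x_0, \dots, x_k}$ versus the simplicial coboundary $\delta^k$ arising from the restriction maps $f^*F(\sigma \leq \tau) = F(f(\sigma) \leq f(\tau))$, shows they agree up to standard sign conventions. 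Stringing these identifications together yields $H^k(\mathcal{K}(X), f^*F) \cong \check H^k(\mathcal{U}, F) \cong H^k(X, F)$, and a diagram chase shows the composite isomorphism is $H^k(f)$.

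The main obstacle is the normalisation step that identifies the full \v{C}ech complex (indexed by all $(k+1)$-tuples of elements of $X$) with its alternating subcomplex (indexed by strict chains): for a non-monotone tuple the intersection $\bigcap_i U_{y_i}$ is merely the set of common upper bounds and need not be of the form $U_z$, so one cannot immediately read off its $F$-sections. A clean alternative that avoids the normalisation issue is to apply the Grothendieck spectral sequence to the composition $\Gamma_{\mathcal{K}(X)} = \Gamma_X \circ f_*$; since $f^*$ is exact, $f_*$ preserves injectives and one obtains $E_2^{p,q} = H^p(X, R^q f_* f^*F) \Rightarrow H^{p+q}(\mathcal{K}(X), f^*F)$. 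The problem then reduces to the vanishing $R^q f_* f^*F = 0$ for $q \geq 1$, which follows from the same acyclic-cover argument applied on $f^{-1}(U_x)$, and combined with the isomorphism $F \cong f_* f^*F$ of Proposition \ref{equivalence} yields the result.
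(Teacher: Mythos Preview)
Both approaches you sketch are different from the paper's, and both leave the same essential step unproved.

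In the \v{C}ech approach, the identification of the alternating \v{C}ech complex of $\mathcal{U}=\{U_y\}_{y\in X}$ with $C^\bullet(\mathcal{K}(X),f^*F)$ fails: the \v{C}ech complex in degree $k$ is indexed by all $(k{+}1)$-subsets of the index set, not only by chains in $X$, and for incomparable $y_0,\dots,y_k$ the intersection $\bigcap_i U_{y_i}$ is the set of common upper bounds, which may be nonempty and need not possess a minimum. For the same reason Leray's theorem is unavailable: its hypothesis requires all finite intersections, not just the cover members, to be $F$-acyclic. You flag an obstacle here but diagnose it as a normalisation issue; the actual gap is the non-chain tuples.

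Your spectral-sequence alternative is structurally sound, but the vanishing $R^qf_*(f^*F)=0$ for $q>0$ is precisely where the content lies, and you defer it back to the unjustified cover argument. There is a further subtlety: $f^{-1}(U_x)=\{\sigma\in\mathcal{K}(X):\max(\sigma)\ge x\}$ is \emph{not} the order complex $\mathcal{K}(U_x)$, since a chain with maximum $\ge x$ may contain elements not in $U_x$. So even granting the proposition for the smaller poset $U_x$, it would not directly yield the stalk vanishing you need.

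The paper's proof bypasses both difficulties by establishing a different acyclicity: for any injective $I$ on $X$, the pullback $f^*I$ is $\Gamma$-acyclic on $\mathcal{K}(X)$ (this is \cite[Lemma~3.15]{brown2022discrete}; for an elementary injective $[y]$ it reduces to the contractibility of the cone $\mathcal{K}(X_{\le y})$). Then an injective resolution $I^\bullet$ of $F$ pulls back to a $\Gamma$-acyclic resolution of $f^*F$, and comparing global sections via the unit isomorphism $I^\bullet\cong f_*f^*I^\bullet$ of Proposition~\ref{equivalence} gives the result directly, without a spectral sequence. Your Leray approach could be completed by proving the analogous local acyclicity on each $f^{-1}(U_x)$, but that is not materially easier than the global statement the paper invokes.
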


\begin{proof}
Let $0\rightarrow F\rightarrow I^\bullet$ be an injective resolution. By Proposition \ref{equivalence}, we obtain the following isomorphism 
\begin{equation*}
\begin{tikzcd}
0 \arrow[r] & F \arrow[r] \arrow[d,"\eta_F"] & I^\bullet \arrow[d,"\eta_{I^\bullet}"] \\
0 \arrow[r] & f_*f^*F \arrow[r] & f_*f^*I^\bullet 
\end{tikzcd}
\end{equation*}
This isomorphism induces an isomorphism in cohomology $H^k\Gamma\eta_{I^\bullet}\colon H^k\Gamma I^\bullet\rightarrow H^k\Gamma f_*f^*I^\bullet$ for all $k\geq 0$. By the proof of  \cite[Lemma 3.15]{brown2022discrete}, $0\rightarrow f^*F\rightarrow f^* I^\bullet$ is an acyclic resolution of $f^*F$. Let $0\rightarrow f^*F\rightarrow J^\bullet$ be an injective resolution of $f^*F$, then, by Theorem 6.2 in Chapter 20 of \cite{lang2005algebra}, there exists a morphism $g^\bullet$ 
\begin{equation*}
\begin{tikzcd}
0 \arrow[r] & f^*F \arrow[r] \arrow[d,"\text{id}"] & f^*I^\bullet \arrow[d,"g^\bullet"] \\
0 \arrow[r] & f^*F \arrow[r] & J^\bullet 
\end{tikzcd}
\end{equation*}
lifting the identity, such that $H^k\Gamma g^\bullet\colon H^k\Gamma f^*I^\bullet\rightarrow H^k\Gamma J^\bullet$ is an isomorphism for all $k\geq 0$. Therefore, $H^k(f)=H^k\Gamma g^\bullet\circ H^k\Gamma \eta_{I^\bullet}$ is an isomorphism for all $k\geq 0$.
\end{proof}

\cancel{
\begin{theorem}
If $X$ is a finite poset, $\Vec{F}$ a persistent sheaf on $X$ and $f\colon \mathcal{K}(X)\rightarrow X$ the projection from the order complex, then $H^k(X,\Vec{F})\cong H^k(\mathcal{K}(X),f^*\Vec{F})$.
\end{theorem}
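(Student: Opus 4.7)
The strategy is to apply Proposition \ref{order_proj_iso} pointwise to the persistent sheaf and show that the pointwise isomorphisms assemble into an isomorphism of persistence modules.

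First, I would recast the two sides. By definition, $H^k(X,\Vec{F})$ and $H^k(\mathcal{K}(X),f^*\Vec{F})$ are the persistence modules obtained by post-composing the functor $\Vec{F}\colon \N \to \mathbf{Shv}(X,\mathbf{vec})$ with the cohomology functors $H^k(X,-)$ and $H^k(\mathcal{K}(X), f^*(-))$, respectively. Hence it suffices to produce a natural isomorphism between the functors $H^k(X,-)$ and $H^k(\mathcal{K}(X), f^*(-))$ from $\mathbf{Shv}(X,\mathbf{vec})$ to $\mathbf{vec}$, and then evaluate it termwise along $\Vec{F}$ to obtain the desired isomorphism of persistence modules.

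Second, Proposition \ref{order_proj_iso} already provides, for every sheaf $F$ on $X$, an isomorphism $\alpha_F \coloneqq H^k(f) \colon H^k(X,F) \xrightarrow{\cong} H^k(\mathcal{K}(X), f^*F)$. What is missing is the naturality of $\alpha_F$ in $F$; this is the main obstacle and is not explicitly addressed in the proof of Proposition \ref{order_proj_iso}. Unwinding that proof, $\alpha_F$ is the composite $H^k\Gamma g^\bullet \circ H^k\Gamma \eta_{I^\bullet}$, where $\eta$ is the unit of the adjunction $f^* \dashv f_*$ and $g^\bullet$ is a lift of the identity between two resolutions of $f^*F$. The unit $\eta$ is natural by the general theory of adjunctions, and the comparison map induced by $g^\bullet$ on cohomology is independent of the choice of lift and hence natural in $F$. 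Equivalently, since $f^*$ is exact, $\alpha_F$ coincides with the canonical morphism $R\Gamma(X,F) \to R\Gamma(\mathcal{K}(X), f^*F)$ in the derived category, which is a natural transformation of functors by construction; taking $k$-th cohomology preserves this naturality.

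Once naturality of $\alpha$ is in hand, composing the natural isomorphism $\alpha\colon H^k(X,-) \Rightarrow H^k(\mathcal{K}(X), f^*(-))$ with the functor $\Vec{F}$ yields a morphism of persistence modules $H^k(X,\Vec{F}) \to H^k(\mathcal{K}(X), f^*\Vec{F})$ whose components $\alpha_{F_i}$ are isomorphisms by Proposition \ref{order_proj_iso}. A morphism of persistence modules that is an isomorphism at every index is itself an isomorphism, which completes the proof.
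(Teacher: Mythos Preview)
Your proposal is correct and follows essentially the same approach as the paper: apply Proposition \ref{order_proj_iso} pointwise and then check that the resulting isomorphisms are natural in $F$, so they assemble into an isomorphism of persistence modules. The only difference is that the paper outsources the naturality of $H^k(f)$ to a citation (\cite[A.5]{russold}), whereas you sketch it directly via the naturality of the adjunction unit and the canonical derived-category pullback; both routes establish the same commutative square.
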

}

\begin{proof}[Proof of Theorem~\ref{thm:poset}]
By \cite[A.5]{russold}, we have the following commutative diagram
\begin{equation*}
\begin{tikzcd}[row sep=large,column sep=huge]
H^k(X,\Vec{F}_i) \arrow[d,swap,"H^k(f\text{,}\Vec{F}_i)"] \arrow[r,"H^k(X\text{,}\Vec{F}_i^{i+1})"] &[20pt] H^k(X,\Vec{F}_{i+1}) \arrow[d,"H^k(f\text{,}\Vec{F}_{i+1})"] \\
H^k(\mathcal{K}(X),f^*\Vec{F}_i) \arrow[r,"H^k(\mathcal{K}(X)\text{,}f^*\Vec{F}_i^{i+1})"] & H^k(\mathcal{K}(X),f^*\Vec{F}_{i+1}) 
\end{tikzcd}
\end{equation*}
and, by Proposition \ref{order_proj_iso}, the vertical morphisms are isomorphisms for all $i\in\mathbb{N}_0$.
\end{proof}

\subsection{Simplification for zigzag posets} \label{app_zigzag_simplification}

\begin{proposition} \label{H0_prop}
If $X$ is a zigzag-poset, $F$ a sheaf on $X$, and $\iota\colon X'\xhookrightarrow{} X$ the inclusion of the alternating subposet, then $\text{lim }F\cong \text{lim }\iota^*F$.
\end{proposition}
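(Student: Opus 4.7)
The plan is to exhibit an explicit inverse to the restriction map $\alpha\colon \lim F \to \lim \iota^* F$ induced by $\iota$, and verify it is two-sided inverse. Viewing $\lim F$ as the set of compatible families $(v_x)_{x\in X}$ in $\prod_x F(x)$, I will define $\beta\colon \lim \iota^* F \to \lim F$ by propagating each element of a compatible family $(w_{x'})$ on $X'$ along a canonical monotone Hasse chain in $X$.

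The key structural input is the following claim, which I will prove first by analysing the ``first min, then first max after, then first min after, \ldots'' rule in the construction of $X'$: the Hasse chain $x_{i_j}, x_{i_j+1}, \ldots, x_{i_{j+1}}$ between any two consecutive kept elements of $X'$ is strictly monotone in the poset (ascending from a min to a max, or descending from a max to a min), since otherwise a local extremum in the gap would violate the ``first'' rule and would itself have been picked. A short case analysis likewise shows that the pre-first-min range $x_0,\dots,x_{i_0-1}$ and the post-last-min range $x_{i_m+1},\dots,x_n$ are monotone Hasse chains anchored at $x_{i_0}$ and $x_{i_m}$ respectively; hence every dropped $x_k \in X\setminus X'$ admits a unique ``nearest kept min'' $x_{i_j}\in X'$ with $x_{i_j} < x_k$ in the poset.

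Given this, I set $\beta((w_{x'}))_x := w_x$ for $x\in X'$ and $\beta((w_{x'}))_{x_k} := F(x_{i_j}\leq x_k)(w_{x_{i_j}})$ for dropped $x_k$, where $x_{i_j}$ is its nearest kept min. To check $\beta(w) \in \lim F$, it suffices to verify compatibility on each Hasse edge of $X$: inside a single segment this is immediate from the functoriality of $F$ applied to the relations $x_{i_j}\leq x_k \leq x_{k+1}$; at an edge one of whose endpoints lies in $X'$, it reduces to the compatibility of $w$ on the corresponding relation in $X'$ (again via transitivity along the monotone chain). The identities $\alpha\circ\beta = \mathrm{id}$ and $\beta\circ\alpha = \mathrm{id}$ are then immediate: the first by direct restriction, and the second because any $(v_x)\in\lim F$ already satisfies $v_{x_k} = F(x_{i_j}\leq x_k)(v_{x_{i_j}})$ by compatibility on the relation $x_{i_j}\leq x_k$ in $X$.

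The main obstacle is the structural monotonicity claim — establishing that the dropped elements split into monotone Hasse chains anchored at kept mins, both inside each gap between consecutive kept elements and in the boundary ranges. This is where the ``first extremum after'' rule in the construction of $X'$ is essential: an intermediate local min or max would immediately contradict the minimality of the next chosen index. Once this local monotonicity is in place, the rest of the argument is a straightforward compatibility check using functoriality.
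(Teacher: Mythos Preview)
Your proof is correct and takes essentially the same approach as the paper: both extend a compatible family on $X'$ to one on $X$ by propagating from the unique nearest kept minimum via the sheaf's structure maps, then verify this yields a two-sided inverse to restriction. The only difference is presentational---the paper phrases the construction in terms of cones and the universal property of the limit, while you work directly with compatible families---and you are somewhat more careful than the paper in spelling out the monotonicity of the Hasse segments between consecutive kept elements.
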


\begin{proof}
A sheaf on $X$ is a functor $F\colon X\rightarrow \mathbf{vec}$. Moreover, let $F'\coloneqq\iota^*F\colon X'\rightarrow \mathbf{vec}$ and note that $F'=F|_{X'}$. Denote by $(\lim F,(\phi_x)_{x\in X})$ the limit of $F$ and $(\lim F',(\psi_x)_{x\in X'})$ the limit of $F'$. The restriction $(\lim F,(\phi_x)_{x\in X'})$ is obviously a cone of $F'$. Hence, by the universal property of the limit, there is a unique morphism $\lim F \xrightarrow{f} \lim F'$ compatible with the morphisms $\phi_x$ and $\psi_x$. We can also extend the limit of $F'$ to a cone of $F$ in the following way. Let $x<y\in X$, then there exists a unique minimal element $z\in X'$ such that $z\leq x<y$. We can now define $\overline{\psi}_x\coloneqq F(z\leq x)\circ \psi_z$ and $\overline{\psi}_y\coloneqq F(z\leq y)\circ \psi_z$ to obtain the commutative diagram
\begin{equation*}
\begin{tikzcd}[column sep=large]
F(x) \arrow[rr,"F(x<y)"] & & F(y) \\
& F(Z) \arrow[ul,swap,"F(z\leq x)"] \arrow[ur,"F(z\leq y)"] \\
& \lim F' \arrow[u,"\psi_z"] \arrow[uul,"\overline{\psi}_x"] \arrow[uur,swap,"\overline{\psi}_y"]
\end{tikzcd}
\end{equation*}
This implies that there is a unique morphism $\lim F' \xrightarrow{g} \lim F$ compatible with the morphisms $\phi_x$ and $\overline{\psi}_x$.
By combining these results, we obtain the following diagram
\begin{equation*}
\begin{tikzcd}[column sep=large]
F(x) \arrow[rr,"F(x\leq y)"] & & F(y) \\
& \lim F \arrow[ul,swap,"\phi_x"] \arrow[ur,"\phi_y"] \\
& \lim F' \arrow[u,"g"] \arrow[uul,"\overline{\psi}_x"] \arrow[uur,swap,"\overline{\psi}_y"] \\
& \lim F \arrow[u,"f"] \arrow[uuul,bend left,"\phi_x"] \arrow[uuur,swap,bend right,"\phi_y"]
\end{tikzcd}
\end{equation*}
By the properties of $f$ and $g$, we have $\overline{\psi}_x\circ f=F(z\leq x)\circ \psi_z\circ f=F(z\leq x)\circ \phi_z=\phi_x$ and $\phi_x\circ g=\overline{\psi}_x$. Hence, the diagram commutes and, by the universal property of the limit, $g\circ f=\text{id}$. In a similar fashion we obtain $f\circ g=\text{id}$. Therefore, $\lim F\cong \lim F'$. 
\end{proof}

\begin{proposition} \label{prop_inj}
Let $X$ be a zigzag-poset and $\iota\colon X'\xhookrightarrow{} X$ the inclusion of the alternating subposet, then $\iota^*$ preserves injectives.
\end{proposition}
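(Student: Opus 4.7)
The plan is to exhibit an exact left adjoint $\iota_{!}$ of $\iota^{*}$, after which $\iota^{*}$ automatically preserves injectives, being the right adjoint of an exact functor. Since a sheaf on a finite poset is just a functor into $\mathbf{vec}$, the pullback $\iota^{*}$ is restriction along the poset inclusion $\iota \colon X' \hookrightarrow X$, and its left adjoint is given by the left Kan extension
\[
\iota_{!} F(x) \;=\; \operatorname{colim}_{x' \in X',\, x' \leq x} F(x').
\]
Everything hinges on proving that $\iota_{!}$ is exact.

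To do so, I will show that for every $x \in X$ the indexing poset $(X')_{\leq x} = \{x' \in X' : x' \leq_{X} x\}$ is either empty or has a maximum element $x^{*}$. Once this is established the colimit collapses to $F(x^{*})$ (or to $0$), so $\iota_{!}$ becomes a pointwise evaluation functor, which is manifestly exact.

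To prove the structural claim I will exploit two features of the zigzag. First, $D_{X}(x) = \{y \in X : y \leq x\}$ is a contiguous interval $[l,r]$ of the Hasse path, with $x$ sitting at the apex of a $\wedge$-shaped subpath, and its endpoints $x_{l}, x_{r}$ are local minima of $X$ (or boundary elements with that property). Second, any Hasse-local maximum of $X$ is in fact a maximal element of $X$, since there is no way to continue upward along the one-dimensional Hasse path. Using this, the case analysis splits into $x \in X'$, where $x^{*} = x$ trivially, and $x \notin X'$. In the latter case no local maximum $x_{i_{2k+1}}$ of $X'$ can lie in $D_{X}(x) \cap X'$, because $x_{i_{2k+1}} \leq x$ would force $x_{i_{2k+1}} = x$. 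The alternating construction of $X'$ further guarantees a local maximum of $X'$ strictly between any two distinct local minima of $X'$; combining these two observations forces $D_{X}(x) \cap X'$ to contain at most one local minimum of $X'$, which is then automatically the maximum.

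The concluding step is standard: for any injective sheaf $I$ on $X$, adjunction yields
\[
\operatorname{Hom}_{\mathbf{Shv}(X')}(-,\iota^{*} I) \;\cong\; \operatorname{Hom}_{\mathbf{Shv}(X)}(\iota_{!}(-),\, I),
\]
a composition of the exact functor $\iota_{!}$ with the exact functor $\operatorname{Hom}(-, I)$, so it is exact, showing that $\iota^{*} I$ is injective. The main obstacle I anticipate is the boundary bookkeeping for the structural claim, in particular the possibility that the first element of $X$ is a local maximum skipped by the construction of $X'$ (and the symmetric phenomenon at the other end); these cases still respect the conclusion because the nearest local minimum of $X$ reached by descent from the skipped element lies in $X'$ and supplies the required maximum.
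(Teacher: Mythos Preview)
Your proof is correct and takes a different route from the paper. The paper argues directly via the classification of injective sheaves on a finite poset as direct sums of elementary injectives $[y]$ (with $[y](x) = \field$ if $x \leq y$ and $0$ otherwise), and checks by a short case analysis that $\iota^{*}[y]$ is again an elementary injective on $X'$: if $y$ is not an internal maximal element there is a unique minimal $z \in X'$ with $z \leq y$ and $\iota^{*}[y] = [z]$; if $y$ is an internal maximal element then $y \in X'$ and $\iota^{*}[y] = [y]$. Your approach instead builds the exact left adjoint $\iota_{!}$ and invokes the general fact that right adjoints of exact functors preserve injectives. The structural core is the same---the paper's ``unique minimal $z \in X'$ with $z \leq y$'' is exactly your $x^{*} = \max (X')_{\leq y}$ in the case $y \notin X'$, and the paper's observation that $\iota^{*}[y]=[z]$ amounts to your claim that $(X')_{\leq y}=\{z\}$ is a singleton---but the packaging differs: your argument is more conceptual and would transfer to any poset inclusion for which every comma category $(X')_{\leq x}$ has a terminal object, while the paper's is shorter and leans on the explicit description of injectives available for finite posets.
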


\begin{proof}
Let $y\in X$ and $[y]$ be the elementary injective sheaf \cite[Def 7.1.3]{curry} concentrated on $y$, i.e.\
\begin{equation*}
[y](x)=\begin{cases}
\field \hspace{5pt} \text{if } x\leq y \\
0 \hspace{6pt} \text{else}
\end{cases} \hspace{2pt} .
\end{equation*}
If $y$ is not an internal maximal element in $X$, then there exists a unique minimal element $z\in X'\subseteq X$ such that $z\leq y$. This implies
\begin{equation*}
\iota^*[y](x)=[y](\iota(x))=[y](x)=\begin{cases}
\field \hspace{5pt} \text{if } x\leq y \\
0 \hspace{6pt} \text{else}
\end{cases}=\begin{cases}
\field \hspace{5pt} \text{if } x=z \\
0 \hspace{6pt} \text{else}
\end{cases}=[z] \hspace{2pt} .
\end{equation*}
If $y$ is an internal maximal element in $X$, then $y\in X'$ and we obtain
\begin{equation*}
\iota^*[y](x)=[y](\iota(x))=[y](x)=\begin{cases}
\field \hspace{5pt} \text{if } x\leq y \\
0 \hspace{6pt} \text{else}
\end{cases}=[y] \hspace{2pt} .
\end{equation*}
Hence, $\iota^*[y]$ is an elementary injective sheaf on $X'$. Since every injective sheaf on a poset is a direct sum of elementary injective sheaves and the inverse image functor is additive, $\iota^*$ preserves injectives.
\end{proof}

\begin{proposition} \label{prop_iso}
Let $X$ be a zigzag-poset, $\iota\colon X'\xhookrightarrow{} X$ the inclusion of the alternating subposet, and $\eta\colon \text{id}\rightarrow \iota_*\iota^*$ the unit of the push-pull-adjunction. Then, for any sheaf $F$ on $X$ we have that $\Gamma\eta_F\colon \Gamma F\rightarrow \Gamma\iota_*\iota^* F$ is an isomorphism. 
\end{proposition}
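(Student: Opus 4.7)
The strategy is to reduce to Proposition~\ref{H0_prop} via a natural identification of $\Gamma \iota_*\iota^* F$ with $\Gamma \iota^* F$. The first step is to observe that for any sheaf $G$ on $X'$ one has $\Gamma \iota_* G \cong \Gamma G$ naturally. This follows because $\iota_*$, being a right adjoint, commutes with limits; equivalently, it follows from the push-pull adjunction $\iota^* \dashv \iota_*$ together with the trivial fact that $\iota^*\underline{\field}_X = \underline{\field}_{X'}$, which gives $\Gamma \iota_* G = \text{Hom}_X(\underline{\field}_X, \iota_* G) \cong \text{Hom}_{X'}(\underline{\field}_{X'}, G) = \Gamma G$.

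Next I would unwind what $\Gamma \eta_F$ becomes under the resulting identification $\Gamma \iota_*\iota^* F \cong \Gamma \iota^* F$. By the triangle identities for the adjunction $\iota^* \dashv \iota_*$, the map $\Gamma \eta_F$ corresponds to the restriction $\text{res}\colon \Gamma F \to \Gamma \iota^* F$ sending a compatible family $(s_x)_{x \in X}$ to $(s_{x'})_{x' \in X'}$. Tracing definitions, $(\iota_* \iota^* F)(x) = \lim_{x' \geq x,\, x' \in X'} F(x')$ and $\eta_F(x)$ sends $s \in F(x)$ to the compatible family $(F(x \leq x')(s))_{x' \geq x,\, x' \in X'}$; on global sections this collapses to $s \mapsto s|_{X'}$, confirming the above.

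Finally, I would invoke Proposition~\ref{H0_prop}: inspecting its proof, the isomorphism $f\colon \lim F \to \lim \iota^* F$ is built via the universal property of $\lim \iota^* F$ applied to the cone $(\lim F, (\phi_x)_{x \in X'})$, which is exactly the restriction of the universal cone over $F$ to $X'$. Thus $f$ coincides with $\text{res}$, and hence $\Gamma \eta_F$ is an isomorphism. The only subtle point is checking that the identification of $\Gamma \eta_F$ with the restriction map really matches the map $f$ constructed in Proposition~\ref{H0_prop}; since both are uniquely characterized by the same universal property on the restricted cone, this is automatic.
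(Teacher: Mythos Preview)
Your argument is correct and reaches the same destination as the paper---both reduce to Proposition~\ref{H0_prop}---but the route differs. The paper computes $\iota_*\iota^*F(x)$ and $\eta_F(x)$ explicitly at each point of $X$ (distinguishing external, minimal, and other elements), observes that $\eta_F(x)=\text{id}$ at all points of $X'$, and then invokes Proposition~\ref{H0_prop} to say that the limit depends only on those points. You instead use the general identity $\Gamma\iota_* G\cong\Gamma G$ (from the adjunction $\iota^*\dashv\iota_*$ and $\iota^*\underline{\field}_X=\underline{\field}_{X'}$) together with the triangle identity $\epsilon_{\iota^*F}\circ\iota^*\eta_F=\text{id}$ to identify $\Gamma\eta_F$ with the restriction map, and then note that this restriction is exactly the isomorphism $f$ built in the proof of Proposition~\ref{H0_prop}. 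Your approach is cleaner and uses nothing specific to zigzag posets beyond Proposition~\ref{H0_prop} itself; the paper's approach is more hands-on and makes the structure of $\iota_*\iota^*F$ visible, which may be reassuring but is not logically needed once Proposition~\ref{H0_prop} is available.
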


\begin{proof}
For any $x\in X$ we get
\begin{equation*}
\iota_*\iota^*F(x)=\underset{y\in X':\iota(y)\geq x}{\text{lim }}\iota^*F(y)=\underset{y\in X':y\geq x}{\text{lim }}F(y)=\begin{cases}
0 \hspace{5pt} \text{if x is external} \\
F(x) \hspace{5pt} \text{if x is minimal} \\
F(z) \hspace{5pt} \text{otherwise, where $z$} \\ \text{ is the unique maximal element $\geq x$ }
\end{cases}
\end{equation*}
Moreover, for $\eta_F\colon F\rightarrow \iota_*\iota^*F$, given by the induced map to the limit $\eta_F(x)\colon F(x)\rightarrow \underset{y\in X':y\geq x}{\text{lim }}F(y)$, we get
\begin{equation*}
\eta_F(x)=\begin{cases}
0 \hspace{5pt} \text{if x is external} \\
\text{id} \hspace{5pt} \text{if x is minimal} \\
F(x\leq z) \hspace{5pt} \text{otherwise, where $z$ is the unique maximal element $\geq x$ }
\end{cases}
\end{equation*}
Hence, $\eta_F(x)=F(x\leq x)=\text{id}$ if $x$ is minimal or internal maximal. Since $\Gamma F=\text{lim }F$ and, by Proposition \ref{H0_prop}, the limit over a zigzag-poset only depends on the minimal and internal maximal elements and the maps between them, we obtain $\Gamma \eta_F\colon \Gamma F\rightarrow \Gamma\iota_*\iota^* F$ is an isomorphism.
\end{proof}

\begin{proposition} \label{prop_ind}
Let $X$ be a zigzag-poset and $\iota\colon X'\xhookrightarrow{} X$ the inclusion of the alternating subposet. If $F$ is a sheaf on $X$, then the induced morphism $H^k(\iota)\colon H^k(X,F)\rightarrow H^k(X',\iota^*F)$ is an isomorphism. 
\end{proposition}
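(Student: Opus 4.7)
The plan is to mimic the proof of Proposition \ref{order_proj_iso}, exploiting the push-pull adjunction $\iota^*\dashv \iota_*$ together with the results already established in Propositions \ref{prop_inj} and \ref{prop_iso}. Concretely, I would start with an injective resolution $0\to F\to I^\bullet$ of $F$ on $X$ and apply the unit $\eta$ of the adjunction componentwise to obtain a morphism of cochain complexes $\eta_{I^\bullet}\colon I^\bullet\to \iota_*\iota^*I^\bullet$. Taking global sections produces the cochain morphism $\Gamma\eta_{I^\bullet}\colon \Gamma I^\bullet\to \Gamma\iota_*\iota^*I^\bullet$, and the induced morphism in cohomology is, after the identification $\Gamma_X\iota_*\cong \Gamma_{X'}$ (which follows from the adjunction $\iota^*\dashv\iota_*$ applied to the constant sheaf), the canonical comparison map $H^k(\iota)\colon H^k(X,F)\to H^k(X',\iota^*F)$.

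The next step is to ensure that the right-hand side genuinely computes $H^k(X',\iota^*F)$. The functor $\iota^*$ is exact on sheaves over finite posets (it is simply restriction under the Alexandrov correspondence), so $0\to \iota^*F\to \iota^*I^\bullet$ is exact, and by Proposition \ref{prop_inj} each $\iota^*I^n$ is injective on $X'$. Hence $\iota^*I^\bullet$ is an injective resolution of $\iota^*F$ and $H^k(X',\iota^*F)\cong H^k(\Gamma_{X'}\iota^*I^\bullet)\cong H^k(\Gamma_X\iota_*\iota^*I^\bullet)$. This is the simplification over Proposition \ref{order_proj_iso}: in the order complex case one had to lift to a true injective resolution $J^\bullet$, but here that extra step collapses.

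Finally, Proposition \ref{prop_iso} asserts that $\Gamma\eta_{I^n}$ is an isomorphism in every degree $n$. Therefore $\Gamma\eta_{I^\bullet}$ is an isomorphism of cochain complexes and induces an isomorphism on cohomology in every degree. Chaining the three identifications yields the desired isomorphism $H^k(X,F)\xrightarrow{\sim} H^k(X',\iota^*F)$.

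The main obstacle I anticipate is the identification of the map induced by $\Gamma\eta_{I^\bullet}$ with the functorial comparison map $H^k(\iota)$, which requires carefully tracing the definition of the map induced on sheaf cohomology by the inclusion $\iota$ through an injective resolution. This is essentially a diagram-chase analogous to the one carried out in Proposition \ref{order_proj_iso}, but it does require attention to make sure the natural transformation we use coincides with the one implicit in the definition of $H^k(\iota)$. Once that is in place the argument is short and runs in parallel with the order-complex proof.
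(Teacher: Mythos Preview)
Your proposal is correct and follows essentially the same approach as the paper: take an injective resolution of $F$, use exactness of $\iota^*$ together with Proposition~\ref{prop_inj} to see that $\iota^*I^\bullet$ is an injective resolution of $\iota^*F$, identify the induced map $H^k(\iota)$ with the map on cohomology coming from $\Gamma\eta_{I^\bullet}$ (the paper cites \cite[A.4]{russold} for this), and conclude via Proposition~\ref{prop_iso}. Your observation that the extra comparison with a second injective resolution $J^\bullet$ (needed in Proposition~\ref{order_proj_iso}) is unnecessary here is exactly the simplification the paper exploits.
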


\begin{proof}
Let $0\rightarrow F\rightarrow I^\bullet$ be an injective resolution. Since, $\iota^*$ is exact and preserves injectives by Proposition \ref{prop_inj}, $0 \rightarrow \iota^*F\rightarrow \iota^*I^\bullet$ is an injective resolution of $\iota^*F$. The morphism induced in cohomology by $\iota$ is given by the following cochain morphism \cite[A.4]{russold}
\begin{equation*}
\begin{tikzcd}
0 \arrow[r] & \Gamma I^0 \arrow[r] \arrow[d,"\Gamma \eta_{I^0}"] & \Gamma I^1 \arrow[r] \arrow[d,"\Gamma \eta_{I^1}"] & \Gamma I_2 \arrow[r] \arrow[d,"\Gamma \eta_{I^2}"] & \cdots \\
0 \arrow[r] & \Gamma \iota_*\iota^*I^0 \arrow[r] \arrow[d,"\text{id}"] & \Gamma \iota_*\iota^*I^1 \arrow[r] \arrow[d,"\text{id}"] & \Gamma \iota_*\iota^*I_2 \arrow[r] \arrow[d,"\text{id}"] & \cdots \\
0 \arrow[r] & \Gamma \iota^*I^0 \arrow[r] & \Gamma \iota^*I^1 \arrow[r] & \Gamma \iota^*I_2 \arrow[r] & \cdots 
\end{tikzcd}
\end{equation*}
where $\Gamma \iota_{*}\iota^*I^k=\Gamma\iota^*I^k$ for all $k\geq 0$. By Proposition \ref{prop_iso}, $\Gamma \eta_{I_k}$ is an isomorphism for every $k\geq 0$. Thus, $(\Gamma \eta_{I^k})_{k\geq 0}$ is a cochain isomorphism and $H^k(\iota)\colon H^k(X,F)\rightarrow H^k(X',\iota^*F)$ is an isomorphism.
\end{proof}

\cancel{
\begin{theorem} \label{zigzag_reduction_thm}
Let $X$ be a zigzag-poset and $\iota\colon X'\xhookrightarrow{} X$ the inclusion of the alternating subposet. If $\Vec{F}$ is a persistent sheaf on $X$, then $ H^k(X,\Vec{F})\cong H^k(X',\iota^*\Vec{F})$. 
\end{theorem}
}

\begin{proof}[Proof of Theorem~\ref{zigzag_reduction_thm}]
By \cite[A.5]{russold}, for every $i\in\mathbb{N}_0$, the following diagram commutes
\begin{equation*}
\begin{tikzcd}[column sep=huge,row sep=large]
H^k(X,\Vec{F}_i) \arrow[r,"H^k(X\text{,}\Vec{F}_i^{i+1})"] \arrow[d,swap,"H^k(\iota\text{,}\Vec{F}_i)"] &[10pt] H^k(X,\Vec{F}_{i+1}) \arrow[d,"H^k(\iota\text{,}\Vec{F}_{i+1})"] \\
H^k(X',\iota^*\Vec{F}_i) \arrow[r,"H^k(X'\text{,}\iota^*\Vec{F}_i^{i+1})"] & H^k(X',\iota^*\Vec{F}_{i+1})
\end{tikzcd}
\end{equation*}
and, by Proposition \ref{prop_ind}, the vertical morphisms are isomorphism. Therefore, $ H^k(X,\Vec{F})\cong H^k(X',\iota^*\Vec{F})$. 
\end{proof}

\begin{corollary}
Let $X$ be a zigzag-poset and $\Vec{F}$ a persistent sheaf on $X$, then $H^k(X,\Vec{F})=0$ for all $k\geq 2$.
\end{corollary}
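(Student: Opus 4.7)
The plan is to reduce the computation over $X$ to a computation over a one-dimensional simplicial complex, where only two cochain groups are nontrivial. By Theorem \ref{zigzag_reduction_thm}, we have an isomorphism $H^k(X,\Vec{F}) \cong H^k(X',\iota^*\Vec{F})$ for every $k\geq 0$, so it suffices to prove the vanishing for the alternating zigzag poset $X'$.

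The key observation is that $X'$ is precisely the face poset of a one-dimensional abstract simplicial complex $K$: the minimal elements of $X'$ correspond to the vertices of $K$, and each internal maximal element $x_{i_j}$ of $X'$ (which covers exactly two minimal elements) corresponds to the edge joining those two vertices. Under this identification, a sheaf on $X'$ is the same as a sheaf on $K$ in the simplicial sense, and a persistent sheaf on $X'$ corresponds via Theorem \ref{equ_shvgr_shvper} to a sheaf of graded $\field[t]$-modules on $K$. Hence by Theorem \ref{russo-thm}, the persistent sheaf cohomology $H^k(X',\iota^*\Vec{F})$ is computed by the cochain complex of graded modules from \eqref{cochain_complex} and \eqref{cochain_complex_def} applied to $\mathcal{M}\iota^*\Vec{F}$ on $K$.

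Since $K$ is one-dimensional, it has no $k$-simplices for $k\geq 2$, so $C^k(K,\mathcal{M}\iota^*\Vec{F})=\bigoplus_{\sigma\in K^k}\mathcal{M}\iota^*\Vec{F}(\sigma)=0$ for all $k\geq 2$. The cochain complex therefore terminates after degree one, and consequently its cohomology vanishes in degrees $\geq 2$. Combining this with the isomorphism from Theorem \ref{zigzag_reduction_thm} yields $H^k(X,\Vec{F})=0$ for all $k\geq 2$. There is no genuine obstacle in this argument; the only subtlety is to check that the identification of $X'$ with the face poset of a one-dimensional complex really does make the simplicial cochain complex \eqref{cochain_complex} agree with any natural candidate for sheaf cohomology on $X'$, but this follows immediately from Theorem \ref{russo-thm} which was proved for exactly this setting.
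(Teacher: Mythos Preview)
Your proof is correct and follows essentially the same approach as the paper's: both invoke Theorem~\ref{zigzag_reduction_thm} to pass to the alternating subposet $X'$, identify $X'$ with the face poset of a one-dimensional simplicial complex, and conclude that the cohomology vanishes in degrees $\geq 2$. The paper's proof is terser and does not spell out the intermediate step via Theorem~\ref{russo-thm} and the cochain complex~\eqref{cochain_complex}, but your added detail is a faithful elaboration of the same reasoning.
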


\begin{proof}
Let $X'$ be the alternating subposet of $X$. By Theorem \ref{zigzag_reduction_thm}, we have $H^k(X,\Vec{F})\cong H^k(X',\iota^*\Vec{F})$. Since, $X'$ is the face-relation-poset of a one-dimensional simplicial complex $H^k(X',\iota^*\Vec{F})=0$ for all $k\geq 2$.
\end{proof}


\section{Experiments}
\label{app_experiments}
We have an implementation (\url{https://github.com/TDA-Jyamiti/Algos-cplxs-pers-modules/}) of the algorithms proposed in the paper. In this section, we report the run-time results of computing persistent sheaf cohomology of inputs of different sizes. For implementation purposes, we consider (co)homology over $\mathbb{Z}_2$.
All the experiments were performed on a machine with 4 Intel(R) Xeon(R) Gold 6248R CPUs.

Given a persistent sheaf over a simplicial complex, we use the algorithm in Section \ref{sec_presentation_algorithm} to obtain presentations of the persistence modules associated with each simplex and the presentations of morphisms between these persistence modules. With this processed input, we use the algorithm presented in Section \ref{sec_cohomology_presentations} to compute the presentation (barcode) of persistent sheaf cohomology. Figure \ref{fig:app_graph_input_bars} shows the processed input of a persistent sheaf over a graph. The persistence modules on each vertex and edge are decomposed into interval modules (red and blue bars) and the green arrows show the maps between these interval modules. 

For our experiments, we consider Erd\H{o}s-R\'enyi graphs of different sizes with $p=0.4$. We generate persistence modules of the same length over each vertex and edge and maps between these persistence modules corresponding to the face relation in the graph. Note that these persistence modules and the corresponding maps are represented as a sequence of matrices (the input for the algorithm in Section \ref{sec_presentation_algorithm}). We report the results in Table \ref{tab:app_run_times}.

\begin{table}
    \centering    \begin{tabular}{|c|c|c|l|} \hline 
         \textbf{Number of simplices}&  \textbf{Length of Persistence Module}&  $\mathbf{\Sigma n_i}$&\textbf{Time}\\ \hline 
         2060&  500&  5143040&285 s 
\\ \hline 
         563&  500&  1429218&79 s\\ \hline
 534& 500&  1270016&68 s\\ \hline 
 531& 100&  221260&15 s
\\ \hline 
 98& 500&  204672 &13 s
\\ \hline 
 115& 50&  15770&3 s
\\ \hline
    \end{tabular}
    \caption{Run times for computing persistent sheaf cohomology. The third column shows the input size $\Sigma n_i$ as defined in section \ref{sec_presentation_algorithm}.}
    \label{tab:app_run_times}
\end{table}

\begin{figure}
    \centering
    \includegraphics[scale=0.5]{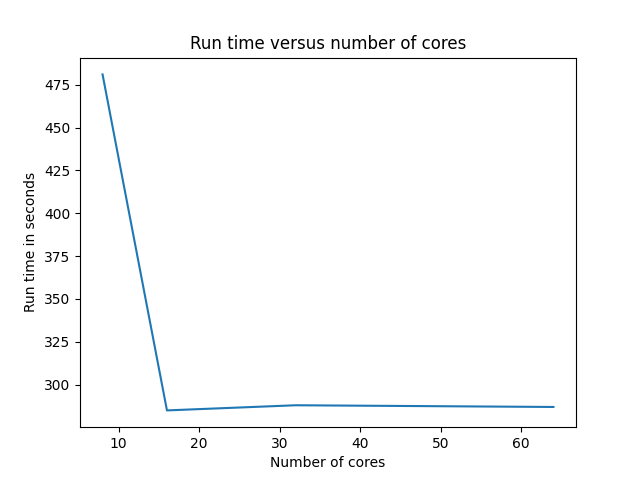}
    \caption{Plot of run time of the algorithm to compute persistent sheaf cohomology versus number of cores being used for computation.}
    \label{fig:app_num_cores_times}
\end{figure}

We can compute the presentations of persistence modules and morphism between them in parallel for each vertex-edge pair. In Figure \ref{fig:app_num_cores_times}, we plot the run times for computing persistent sheaf cohomology of a persistent sheaf over a graph with 2060 simplices with filtration length of 500 and the input size 5.1 million versus the number of cores used for computation. We see that there is a significant increase in performance from 8 to 16 cores. However, increasing the number of cores beyond 16 does not really give any performance enhancement.

We note that in these experiments, we compute degree 0 cohomology of the persistent sheaf because there are no higher dimensional simplices. Thus, according to \eqref{comp_presentations}, we only compute $f_0, f_1, p, q$ because $g_0, g_1, r$ are $0$-maps. We would like to point out that the run-time for computing the subsequent higher degree cohomology would be very similar because we can reuse $f_0, f_1, p, q$ and only compute $g_0, g_1$ and $r$ for an input that has higher dimensional simplices and the corresponding persistence modules over each of those simplex.

\begin{figure}
    \centering
    \includegraphics[width=\textwidth]{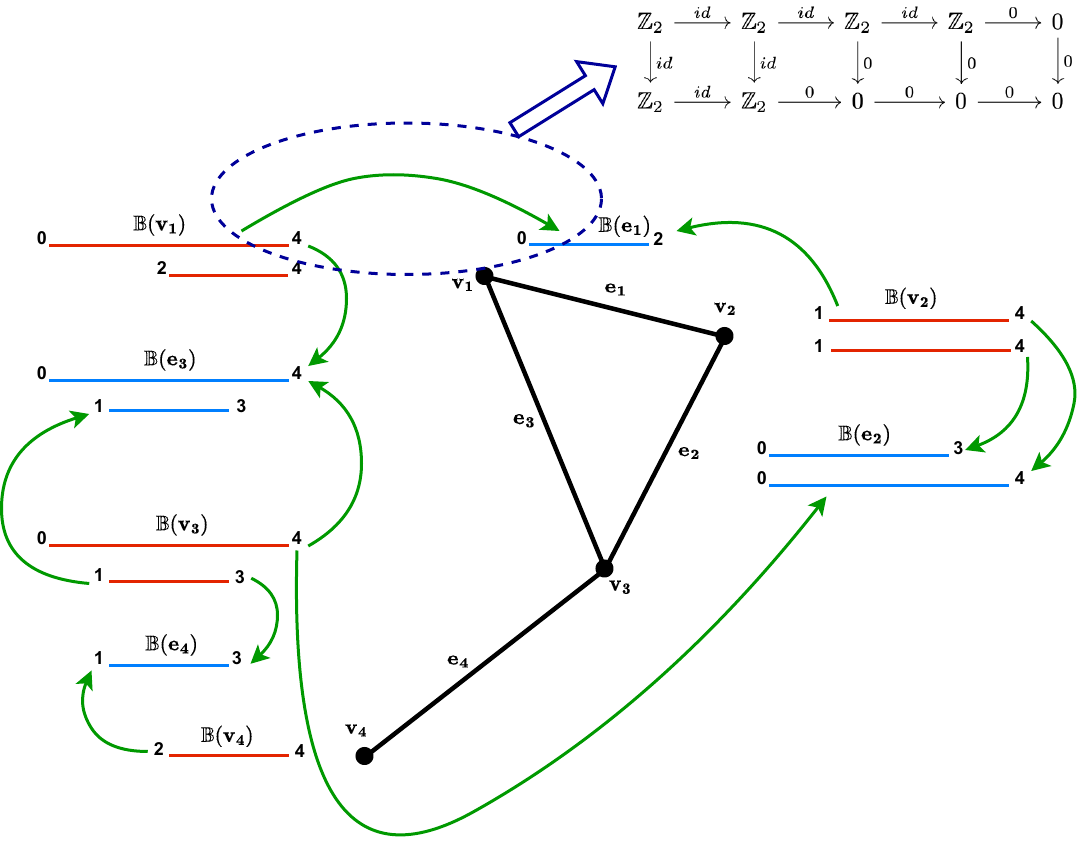}
    \caption{The figure above shows a processed persistent sheaf over a graph with $4$ vertices and $4$ edges. In the figure, the persistence modules are represented as bars/interval modules (red bars denote the persistence modules associated with vertices and blue bars denote those associated with edges). Each bar has a birth-time $b$ and a death-time $d$ shown towards the left and right of the bar respectively and is to be interpreted as a bar $[b,d)$. The green arrows show the map between persistence modules which can be represented as maps between interval modules (bars). One of the green arrows, towards the top part of the figure, has been zoomed in to show how one of the red bars (interval modules) associated with $\mathbf{v_1}$ is mapped to the blue bar associated with $\mathbf{e_1}$. $\mathbb{B}(\sigma)$ denotes the interval decomposition of the persistence module associated with $\sigma$ where $\sigma \in \{ \mathbf{v_1}, \mathbf{v_2}, \mathbf{v_3}, \mathbf{v_4}, \mathbf{e_1}, \mathbf{e_2}, \mathbf{e_3}, \mathbf{e_4}\}$.}
    \label{fig:app_graph_input_bars}
\end{figure}
\end{document}